\newtheorem{theorem}{Theorem}[section]
\newtheorem{lemma}[theorem]{Lemma}
\newtheorem{proposition}[theorem]{Proposition}
\newtheorem{corollary}[theorem]{Corollary}
\newtheorem*{maincorollary}{Corollary 4.19}
\newtheorem*{maintheorem}{Theorem 3.5}
\newtheorem*{tamtheorem}{Theorem 4.11}
\newtheorem*{cattheorem}{Theorem 4.13}
\theoremstyle{definition}
\newtheorem{definition}[theorem]{Definition}
\newtheorem*{maindefinition}{Definition 2.12}
\newtheorem*{permdefinition}{Definition 3.1}
\newtheorem{remark}[theorem]{Remark}
\newcounter{x}
\newcounter{y}
\newcounter{z}
\newcommand\xaxis{210}
\newcommand\yaxis{-30}
\newcommand\zaxis{90}
\newcommand\topside[3]{
  \fill[fill=white, draw=black,shift={(\xaxis:#1)},shift={(\yaxis:#2)},
  shift={(\zaxis:#3)}] (0,0) -- (30:1) -- (0,1) --(150:1)--(0,0);
}
\newcommand\leftside[3]{
  \fill[fill=black, draw=black,shift={(\xaxis:#1)},shift={(\yaxis:#2)},
  shift={(\zaxis:#3)}] (0,0) -- (0,-1) -- (210:1) --(150:1)--(0,0);
}
\newcommand\rightside[3]{
  \fill[fill=gray, draw=black,shift={(\xaxis:#1)},shift={(\yaxis:#2)},
  shift={(\zaxis:#3)}] (0,0) -- (30:1) -- (-30:1) --(0,-1)--(0,0);
}
\newcommand\cube[3]{
  \topside{#1}{#2}{#3} \leftside{#1}{#2}{#3} \rightside{#1}{#2}{#3}
}
\newcommand\planepartition[1]{
 \setcounter{x}{-1}
  \foreach \a in {#1} {
    \addtocounter{x}{1}
    \setcounter{y}{-1}
    \foreach \b in \a {
      \addtocounter{y}{1}
      \setcounter{z}{-1}
      \foreach \c in {0,...,\b} {
        \addtocounter{z}{1}
      \ifthenelse{\c=0}{\setcounter{z}{-1},\addtocounter{y}{0}}{
        \cube{\value{x}}{\value{y}}{\value{z}}
      }
    }
  }
 }
}
\newcommand{\mymk}[1]{%
  \tikz[baseline=(char.base)]\node[anchor=south west, draw,rectangle, rounded corners, inner sep=2pt, minimum size=7mm,
    text height=2mm](char){\ensuremath{#1}} ;}
\begin{document}

%     [Short Title]{Full Title}
\title{Permutation totally symmetric \\ self-complementary plane partitions}

%    Information for  author
\author[J.\ Striker]{Jessica Striker}

\address{Department of Mathematics \\
North Dakota State University \\
Fargo, ND, USA}
\email{jessica.striker@ndsu.edu}

\thanks{The author is supported in part by the National Security Agency grant number H98230-15-1-0041, North Dakota EPSCoR grant number IIA-1355466, and the NDSU Advance FORWARD program sponsored by the National Science Foundation HRD-0811239.}

%    General info
%\subjclass{Primary 05A05; Secondary 06A07}

%\keywords{alternating sign matrix, permutation, plane partition, poset, Tamari lattice}
%\date{\today}

%\dedicatory{}
\begin{abstract}
Alternating sign matrices and totally symmetric self-complementary plane partitions are equinumerous sets of objects for which no explicit bijection is known. In this paper, we identify a subset of totally symmetric self-complementary plane partitions corresponding to permutations by giving a statistic-preserving bijection to permutation matrices, which are a subset of alternating sign matrices. We use this bijection to define a new partial order on permutations, and prove this new poset contains both the Tamari lattice and the Catalan distributive lattice as subposets. We also study a new partial order on totally symmetric self-complementary plane partitions arising from this perspective and show that this is a distributive lattice related to Bruhat order when restricted to permutations.
\end{abstract} 

\maketitle

%\tableofcontents

%%%%%%%%%%%%%%%%%%%%%%%%%%%%%%%%%%%%%%%%%%%%%%%%%%%%%%%%%%%%
\section{Introduction}
%%%%%%%%%%%%%%%%%%%%%%%%%%%%%%%%%%%%%%%%%%%%%%%%%%%%%%%%%%%%
\label{s:introduction}

Alternating sign matrices (ASM) with $n$ rows and $n$ columns, descending plane partitions (DPP) with largest part at most $n$, and totally symmetric self-complementary plane partitions (TSSCPP) inside a $2n\times 2n\times 2n$ box are equinumerous sets that lack an explicit bijection between any two. (See~\cite{ANDREWS_TSSCPP} \cite{KUP_ASM_CONJ} \cite{MRRASMDPP} \cite{MRRANDREWSCONJ} \cite{MRR3}   \cite{ZEILASM}  for these enumerations and bijective conjectures and \cite{BRESSOUDBOOK} for the story behind these papers.) In~\cite{Striker_DPP}, we gave a bijection between permutation matrices (which are a subset of ASM) and descending plane partitions with no special parts in such a way that the inversion number of the permutation matrix equals the number of parts of the DPP. In this paper, we complete the solution to this bijection problem in the special case of permutations by identifying a subset of TSSCPP corresponding to permutations and giving a bijection which yields a direct interpretation for the inversion number on these permutation TSSCPP. See Figure~\ref{fig:permbij}. We then use this bijection to prove several poset-theoretic results on TSSCPP and permutations, with connections to Bruhat order and partial orders on Catalan objects.

\begin{figure}[htbp]
\begin{center}
$\begin{array}{c}
\mbox{Totally Symmetric Self-}\\
\mbox{Complementary Plane Partition}\\
\scalebox{.2}{
$\begin{tikzpicture}
\planepartition{{12,12,12,12,12,12,10,10,10,10,6,6},{12,12,12,12,12,12,10,9,9,7,6,6},{12,12,11,11,11,10,8,8,6,6,5,2},{12,12,11,10,10,10,8,8,6,6,3,2},{12,12,11,10,8,8,6,6,4,4,3,2},{12,12,10,10,8,8,6,6,4,4,2,2},{10,10,8,8,6,6,4,4,2,2,0,0},{10,9,8,8,6,6,4,4,2,1,0,0},{10,9,6,6,4,4,2,2,2,1,0,0},{10,7,6,6,4,4,2,1,1,1,0,0},{6,6,5,3,3,2,0,0,0,0,0,0},{6,6,2,2,2,2,0,0,0,0,0,0}}
\end{tikzpicture}$}
\end{array}
\Leftrightarrow
\begin{array}{c}
\mbox{Permutation matrix}\\
\mbox{ }\\
\left( \begin{array}{cccccc}
0&0&0&1&0&0\\
0&0&0&0&0&1\\
0&0&1&0&0&0\\
0&0&0&0&1&0\\
1&0&0&0&0&0\\
0&1&0&0&0&0
\end{array}\right) \end{array}
%\Leftrightarrow
%\begin{array}{c}
%\mbox{Descending}\\
%\mbox{Plane Partition}\\
%\mbox{  }\\
%\begin{array}{ccccc}
%&&&&\\
%6 & 6 & 6 & 6 & 5\\
%& 5 & 4 & 4 & 4\\
%&  & 3 & 3 & \\
%&&&&\\
%&&&&
%\end{array} 
%\end{array}
$
\end{center}
\caption{A permutation matrix and the corresponding totally symmetric self-complementary plane partition} 
\label{fig:permbij}
\end{figure}

\medskip
\textbf{We highlight here the main new definitions and theorems of this paper.}
We define a new object which we show in Proposition~\ref{prop:boolbij} to be in bijection with TSSCPP. 
\begin{maindefinition}
A \emph{TSSCPP boolean triangle} of order $n$ is a triangular integer array $\{b_{i,j}\}$ for $1\leq i\leq n-1$, $n-i\leq j\leq n-1$ with entries in $\{0,1\}$ such that the diagonal partial sums satisfy 
%\begin{equation}
%\label{eq:par}
\[1+\displaystyle\sum_{i=j+1}^{i'} b_{i,n-j-1} \geq \displaystyle\sum_{i=j}^{i'} b_{i,n-j}.\]
%\end{equation}
\end{maindefinition}
\medskip

We use this new object to characterize the permutation subset of TSSCPP.

\begin{permdefinition}
Let \emph{permutation TSSCPP} of order $n$ be all TSSCPP inside a $2n\times 2n\times 2n$ box whose corresponding  boolean triangles have weakly decreasing rows. 
\end{permdefinition}

In Lemma~\ref{lem:permconfig} and Theorem~\ref{thm:permmagog}, we translate this characterization directly to TSSCPP and \emph{magog triangles}, which are triangular arrays in bijection with TSSCPP (see Definition~\ref{def:magog}).

\medskip
\textbf{Our main theorems} are as follows.

\begin{maintheorem}
There is a natural, statistic-preserving bijection between $n\times n$ permutation matrices with inversion number $p$  whose one in the last row is in column $k$ and whose one in the last column is in row $\ell$ and permutation TSSCPP boolean triangles of order~$n$ with $p$ zeros, 
exactly $n-k$ of which are contained in the last row, and for which the lowest one in diagonal $n-1$ is in row $\ell-1$. 
\end{maintheorem}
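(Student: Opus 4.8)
The plan is to reduce permutation TSSCPP boolean triangles to a classical combinatorial model and then match it to permutations via the inversion table. First I would record the elementary observation that, since the entries lie in $\{0,1\}$, the requirement of \emph{weakly decreasing rows} forces row $i$ (the entries $b_{i,n-i},\dots,b_{i,n-1}$) to be a block of $1$'s followed by a block of $0$'s. Hence such a triangle is completely encoded by the vector $(z_1,\dots,z_{n-1})$, where $z_i$ is the number of $0$'s in row $i$, with $0\le z_i\le i$. The first real step is to check that the diagonal partial-sum condition of Definition 2.12 is \emph{automatically} satisfied once the rows are weakly decreasing. Writing $z_i$ for the zero-count, one has $b_{i,n-j}=1\iff z_i\le j-1$ and $b_{i,n-j-1}=1\iff z_i\le j$; since $[\,z_i\le j-1\,]\le[\,z_i\le j\,]$ and the sum $\sum_{i=j}^{i'}b_{i,n-j}$ runs over one extra top index ($i=j$) compared with $\sum_{i=j+1}^{i'}b_{i,n-j-1}$, the required inequality holds termwise except for that single index, which the leading $+1$ absorbs. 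Thus permutation TSSCPP boolean triangles of order $n$ are exactly the vectors $(z_1,\dots,z_{n-1})$ with $0\le z_i\le i$, of which there are $\prod_{i=1}^{n-1}(i+1)=n!$.

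Next I would define the map directly. To the permutation matrix of $\sigma$ (with $1$'s at positions $(i,\sigma(i))$) assign $z_i=\#\{\,j\le i:\sigma(j)>\sigma(i+1)\,\}$ for $1\le i\le n-1$. This is a reindexing of the standard inversion table, hence a bijection from $S_n$ onto $\prod_{i=1}^{n-1}\{0,\dots,i\}$, with the familiar inverse reconstructing $\sigma$ from the counts; combined with the first paragraph, this gives a bijection onto permutation TSSCPP boolean triangles. (I would also note, for the ``naturality'' claim, that this encoding should be checked to coincide with the TSSCPP$\leftrightarrow$boolean-triangle correspondence of Proposition~\ref{prop:boolbij}, though as a self-contained proof the inversion-table map already suffices.)

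Finally I would verify the three statistics. The total number of zeros is $\sum_{i=1}^{n-1}z_i=\#\{(j,m):j<m,\ \sigma(j)>\sigma(m)\}=\operatorname{inv}(\sigma)=p$. For the last row ($i=n-1$), $z_{n-1}=\#\{\,j<n:\sigma(j)>\sigma(n)\,\}=n-\sigma(n)=n-k$, so exactly $n-k$ of the zeros lie in the last row. For diagonal $n-1$ (column $j=n-1$), the computation above gives $b_{i,n-1}=1\iff z_i=0$, and $z_i=0$ means $\sigma(i+1)$ exceeds every earlier value, i.e.\ position $i+1$ is a prefix maximum; since the largest prefix-maximum position of $\sigma$ is exactly $\sigma^{-1}(n)$, the lowest $1$ in diagonal $n-1$ occurs in row $\sigma^{-1}(n)-1=\ell-1$ (with the convention that an empty diagonal corresponds to $\ell=1$).

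The main obstacle is the third statistic. Matching the inversion number and the last-row count is bookkeeping, but correctly interpreting ``the lowest one in diagonal $n-1$,'' relating the vanishing $z_i=0$ to prefix maxima, identifying $\sigma^{-1}(n)$ as the largest prefix-maximum position, and handling the boundary convention (no $1$ in the diagonal $\leftrightarrow\ \sigma^{-1}(n)=1$) all require care. A secondary source of error is the indexing geometry itself—fixing which array is ``diagonal $n-1$,'' the orientation of the rows, and the exact range of the indices $j,i'$ in Definition 2.12—so I would pin these down on a small example (say $n=3$ or the $n=6$ instance of Figure~\ref{fig:permbij}) before writing the general argument.
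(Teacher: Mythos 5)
Your proof is correct, but it takes a genuinely different route from the paper's. You collapse a permutation boolean triangle to its row zero-count vector $(z_1,\dots,z_{n-1})$ and match it against the position-indexed inversion table $z_i=\#\{j\le i:\sigma(j)>\sigma(i+1)\}$, so the bijection with $S_n$ and the count $n!$ come for free from the Lehmer-code bijection, and all three statistics are verified by direct bookkeeping on $\sigma$ (your prefix-maximum argument for the diagonal-$n{-}1$ statistic, including the $\ell=1$ boundary convention, is sound, and your check that the partial-sum inequality of Definition~\ref{def:bool} is automatic for weakly decreasing rows is the same observation as Proposition~\ref{prop:nfact}). The paper instead builds the monotone triangle $a$ from $b$ by the local rule $a_{i,j}=a_{i+1,j}$ or $a_{i+1,j-1}$ according to $b_{i,j}$, verifies via Corollary~\ref{cor:negativeone} that the result is a permutation monotone triangle, and reads the statistics off that intermediate object; one can check (as the paper itself does in the proof of Corollary~\ref{cor:TBool}) that a zero in row $i$ records an inversion between $\sigma(i+1)$ and some earlier value, so the two constructions define the \emph{same} map. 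What your route buys is a shorter, self-contained argument with the inverse map and the enumeration handled by a classical bijection; what the paper's route buys is the explicit monotone-triangle description, which is what gets reused later (in Theorems~\ref{thm:tamari} and~\ref{thm:Catdistr} and Corollary~\ref{cor:TBool}, whose proofs repeatedly manipulate the entries $a_{i,j}$) and which is the form one would want for any attempt to extend the bijection beyond permutations. If you intend the word ``natural'' in the theorem to mean the composite of the known TSSCPP-family bijections, you should carry out the coincidence check you flagged parenthetically rather than leave it as an aside.
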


We use this characterization of permutation TSSCPP to define, in Definition~\ref{def:permmagog}, $T_n^{\rm Perm}$, \textbf{a new partial order on permutations}, as a restriction of the natural partial order on magog triangles. We show that $T_n^{\rm Perm}$ contains two non-isomorphic Catalan subposets.

\begin{tamtheorem}
$T_n^{\rm Perm}$ contains a subposet which is isomorphic to the \textbf{Tamari lattice} $Tam_n$. In particular, the subposet of $T_n^{\rm Perm}$ consisting of the $132$-avoiding permutations is isomorphic to $Tam_n$.
\end{tamtheorem}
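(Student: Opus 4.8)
The plan is to build an explicit order isomorphism $\psi$ from the induced subposet of $T_n^{\rm Perm}$ on the $132$-avoiding permutations onto $Tam_n$, realized on binary trees under the right-rotation order. Since $T_n^{\rm Perm}$ is by definition the componentwise order on magog triangles, comparability of two $132$-avoiding permutations is literally componentwise comparison of their magog triangles; consequently, proving the isomorphism reduces to showing that $\psi$ and $\psi^{-1}$ are order-preserving, with no separate analysis of the induced cover relation required. Writing $M(\sigma)$ for the magog triangle of $\sigma$, the first task is to determine precisely which $M(\sigma)$ arise from $132$-avoiding permutations and to reduce their comparison to the comparison of a single integer vector.

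First I would push $132$-avoidance through the paper's chain of bijections --- permutation, permutation matrix, boolean triangle with weakly decreasing rows (Definition 3.1), magog triangle --- to obtain a local condition on $M(\sigma)$ equivalent to the pattern. I expect this condition to force $M(\sigma)$ to be determined by a single \emph{profile} (for instance the distinguished diagonal $n-1$ featured in Theorem 3.5), collapsing each $132$-avoiding magog triangle to a vector $v(\sigma)$ so that $M(\sigma) \le M(\tau)$ componentwise if and only if $v(\sigma) \le v(\tau)$ componentwise. The natural engine for proving both the pattern characterization and the collapse is the standard recursive decomposition of $132$-avoiding permutations by the position of $n$, which splits $\sigma$ into two smaller $132$-avoiding words; I would track how this recursion acts on the triangle and hence on $v(\sigma)$.

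Next I would identify the vectors $v(\sigma)$ with a known vector model of $Tam_n$ whose order is componentwise, namely the bracketing vectors of the classical Huang--Tamari embedding, for which componentwise order reproduces the Tamari lattice exactly. Matching $v(\sigma)$ to the bracketing vector of the binary tree $\psi(\sigma)$, possibly after a fixed affine change of coordinates, then yields the chain of equivalences: $\sigma \le \tau$ holds if and only if $M(\sigma) \le M(\tau)$ componentwise, if and only if $v(\sigma) \le v(\tau)$ componentwise, if and only if $\psi(\sigma) \le \psi(\tau)$ in $Tam_n$. As an independent check, I would verify the correspondence on covering relations, showing that a single right rotation of binary trees corresponds to the componentwise-minimal change of $v$ inside the $132$-avoiding family, and conversely.

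The main obstacle is the profile characterization together with its identification with the correct Tamari vector model: I must prove both that $132$-avoidance collapses $M(\sigma)$ to one vector, and that the resulting vectors, with componentwise order, are precisely an order-isomorphic copy of the Huang--Tamari bracketing vectors rather than, say, the area vectors of Dyck paths --- the latter would produce the distributive Catalan lattice instead of $Tam_n$. Distinguishing these two componentwise orders, and proving the collapse holds exactly for the $132$-avoiding class and for no larger class, is where the real work lies; the recursive structure of $132$-avoiding permutations should keep the bookkeeping tractable.
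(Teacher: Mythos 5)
There is a genuine gap at the center of your plan. You correctly observe that, since $T_n^{\rm Perm}$ is an induced subposet, it suffices to prove the order-relation biconditional $M(\sigma)\le M(\tau)$ componentwise $\iff v(\sigma)\le v(\tau)$ for a suitable vector $v$, with no cover analysis needed \emph{in principle}. But the mechanism you offer for this biconditional --- that $132$-avoidance ``collapses'' $M(\sigma)$ to a single profile vector --- is a non-sequitur. Determination of the triangle by a vector is automatic and proves nothing about the order: \emph{every} permutation TSSCPP boolean triangle has weakly decreasing rows, hence is determined by its vector of row sums, and the magog triangle is in turn determined by the boolean triangle; yet componentwise comparison of magog triangles on \emph{all} permutations is emphatically not equivalent to componentwise comparison of those row-sum vectors --- the latter is the product of chains $TBool_n^{\rm Perm}$ of Corollary~\ref{cor:TBool}, while $T_n^{\rm Perm}$ is not even a lattice for $n\ge 4$ (Proposition~\ref{prop:notlatticemagog}). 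So the order-equivalence you need is a genuinely $132$-specific statement, and your proposal supplies no argument for it; ``I expect this condition to force\ldots'' is exactly where the theorem lives. The paper proves this step by translating $T_n$-covers into boolean-triangle moves (Lemma~\ref{lem:magogboolcovers}) and carefully composing those covers inside the $132$-avoiding class to see that they reduce to reverse componentwise steps on the Huang--Tamari bracket vector $x_i=i+\Sigma_{n-i}$; you have explicitly discarded that tool without replacing it.

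The remaining ingredients of your plan are sound and close to the paper's: the vector you want is the row-sum vector of the boolean triangle (equivalently, readable off a boundary diagonal of the magog triangle), the target model is indeed the Huang--Tamari bracket-vector order of Definition~\ref{def:tamari}, and your worry about confusing it with the area-vector (Catalan distributive) order is well placed --- the paper separates these precisely by the two defining inequalities and by the two pattern classes ($132$ versus $213$, Theorems~\ref{thm:tamari} and~\ref{thm:Catdistr}). Your proposed recursion on the position of $n$ in a $132$-avoiding permutation is also a plausible alternative engine for verifying the bracket-vector inequalities, in place of the paper's direct local analysis of zeros and ones in the boolean triangle via Theorem~\ref{thm:tsscppsn}. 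But until you prove the order-embedding claim --- either by a cover analysis as in the paper, or by showing directly that on the $132$-avoiding class every magog entry is a componentwise-monotone function of $v$ and conversely --- the proof is incomplete at its most important step.
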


\begin{cattheorem}
$T_n^{\rm Perm}$ contains a subposet which is isomorphic to the \textbf{Catalan distributive lattice} $Cat_n$. In particular, the subposet of $T_n^{\rm Perm}$ consisting of the $213$-avoiding permutations is isomorphic to $Cat_n$.
\end{cattheorem}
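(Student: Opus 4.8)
The plan is to realize $Cat_n$ concretely, to identify exactly which boolean (equivalently magog) triangles arise from $213$-avoiding permutations under the bijection of Theorem~3.5, and then to exhibit an explicit order isomorphism between these triangles, with their induced order, and the chosen realization of $Cat_n$. Since $213$-avoiding permutations of $[n]$ are counted by the Catalan number $C_n$, the two posets have the same cardinality for free; all of the content lies in matching the order relations, so I will concentrate there. As my model of $Cat_n$ I would take the distributive lattice of partitions $\lambda=(\lambda_1\geq\cdots\geq\lambda_{n-1})$ fitting under the staircase, that is $\lambda_i\leq n-i$, ordered by containment; this is the lattice of order ideals of the triangular poset and is the standard Catalan distributive lattice.

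First I would pin down the triangles corresponding to $213$-avoiding permutations. Using the characterization of permutation TSSCPP by boolean triangles with weakly decreasing rows (Definition~3.1) together with the statistic-preserving bijection of Theorem~3.5, I would determine what the extra hypothesis of $213$-avoidance imposes on the array $\{b_{i,j}\}$. The expectation, guided by the parallel $132$-avoiding/Tamari result of Theorem~4.11, is that $213$-avoidance forces each row of the boolean triangle to be a block of $1$'s followed by a block of $0$'s, with the block lengths weakly controlled down the diagonals, so that the whole triangle is encoded by a single weakly decreasing sequence of row-sums. Translating through Definition~2.12 and the magog bijection (Definition~\ref{def:magog}), this sequence becomes precisely a sub-staircase partition, giving a bijection from $213$-avoiding permutation TSSCPP to the elements of my model of $Cat_n$.

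Next I would check that this bijection is an order isomorphism. By the definition of $T_n^{\rm Perm}$ (Definition~\ref{def:permmagog}) as the restriction of the natural componentwise partial order on magog triangles, two $213$-avoiding permutations are comparable in $T_n^{\rm Perm}$ exactly when their magog triangles compare entrywise. I would show that the subset of $213$-avoiding magog triangles is closed under componentwise minimum and maximum: because taking coordinatewise $\min$ and $\max$ preserves every defining inequality of a magog triangle and also preserves the single-descent-per-row shape isolated in the previous step, this subset is a sublattice of the componentwise-ordered array lattice and is hence itself distributive. Under the encoding by sub-staircase partitions, componentwise $\min$ and $\max$ of triangles become componentwise $\min$ and $\max$ of the partition vectors, which is exactly meet and join in the containment order; matching covering relations then upgrades the bijection to a lattice isomorphism, so the $213$-avoiding subposet of $T_n^{\rm Perm}$ is isomorphic to $Cat_n$.

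The main obstacle will be the combinatorial identification in the second step together with the closure claim in the third: one must verify precisely how $213$-avoidance is recorded in the boolean/magog triangle and then confirm that this recording is stable under coordinatewise $\min$ and $\max$. Establishing that the distinguished subset is genuinely a sublattice, rather than merely an equinumerous subset, is what forces the lattice to be distributive and isomorphic to the Catalan lattice rather than to some other $C_n$-element poset, and I expect the bookkeeping relating the row descents of the triangle to the staircase bound $\lambda_i\leq n-i$ to be the most delicate point.
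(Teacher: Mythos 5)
Your first two steps track the paper's proof: the paper likewise encodes a permutation TSSCPP boolean triangle by the sequence $x_i = i + (\text{sum of row } n-i)$ and identifies the image of the $213$-avoiding permutations with exactly the sequences of Definition~\ref{def:Catdistr} (equivalently, your sub-staircase partitions). One caution on your second step: ``each row is a block of $1$'s followed by a block of $0$'s'' is already true of \emph{every} permutation boolean triangle --- that is precisely Definition~\ref{def:permbool} --- so it records nothing about $213$-avoidance. The actual extra condition, which the paper extracts by translating a would-be $213$-pattern through the monotone triangle, is that zeros propagate downward along the diagonals (if $b_{i,j}=0$ then $b_{i',j}=0$ for all $i'>i$), and this is what forces $x_1\leq x_2\leq\cdots\leq x_n$; you would need to prove that equivalence in both directions.

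The genuine gap is in your third step. You propose to obtain the order isomorphism by showing the $213$-avoiding permutation magog triangles are closed under componentwise $\min$ and $\max$, ``because taking coordinatewise $\min$ and $\max$ preserves every defining inequality of a magog triangle and also preserves the single-descent-per-row shape.'' The first clause is fine ($T_n$ is a distributive lattice), but the second is unsupported: the single-descent shape lives on the \emph{boolean} triangle, and the passage between magog entries and boolean entries is not componentwise (compare the center and right posets in Figure~\ref{fig:A3T3}), so componentwise $\min/\max$ of magog triangles does not act in any transparent way on boolean triangles. Worse, the analogous closure statement for the full set of permutation magog triangles is \emph{false} --- Proposition~\ref{prop:notlatticemagog} shows $T_n^{\rm Perm}$ is not even a lattice for $n\geq 4$ --- so any closure argument must use $213$-avoidance in an essential way, which yours does not; and the truth of the theorem itself would not imply closure, since an induced subposet can be a lattice in its own right without being a sublattice of the ambient lattice. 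Even granting closure, you would still need to show your encoding intertwines the magog $\min/\max$ with $\min/\max$ of partition vectors, which again requires relating componentwise comparison of magog triangles to the row sums. The paper sidesteps all of this by translating the covering relations of $T_n$ to boolean triangles (Lemma~\ref{lem:magogboolcovers}) and checking directly that, restricted to $213$-avoiding triangles, they compose to exactly ``decrease one $x_i$ by $1$'' (slide the rightmost one of a row southeast until it exits the triangle). You should either replace the sublattice argument with that covering-relation analysis or supply genuine proofs of both the closure and the intertwining claims.
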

We also define \textbf{a new poset on TSSCPP}, $TBool_n$, using TSSCPP boolean triangles. Of note is the following corollary of Theorem~\ref{thm:tsscppsn}. 

\begin{maincorollary}
$TBool_n^{\rm Perm}$ (the induced subposet of $TBool_n$ consisting of all the permutation TSSCPP boolean triangles) equals $[2]\times[3]\times\cdots\times[n]$, that is, the product of chains with $2, 3, \ldots, n$ elements. Thus, this
 is a partial order on permutations which sits between the weak and strong \textbf{Bruhat orders} on the symmetric group. 
\end{maincorollary}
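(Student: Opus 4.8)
The plan is to peel the statement into three layers: parametrize the permutation TSSCPP boolean triangles explicitly, recognize the order they inherit from $TBool_n$ as a product of chains, and then transport everything through the bijection of Theorem~3.5 to place it between the two Bruhat orders. First I would unpack Definition~3.1. A permutation boolean triangle has weakly decreasing rows, and since the entries lie in $\{0,1\}$, each row $i$ (which runs over $n-i\le j\le n-1$, hence has length $i$) is forced to have the form $1^{r_i}0^{\,i-r_i}$ and is therefore determined by the single number $r_i\in\{0,1,\dots,i\}$ of $1$'s it contains.

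The first genuine point is to show that the diagonal partial-sum inequalities of Definition~2.12 are automatically satisfied once the rows are weakly decreasing, so that every tuple $(r_1,\dots,r_{n-1})\in\prod_{i=1}^{n-1}\{0,1,\dots,i\}$ actually occurs. I would argue by comparing adjacent diagonals: for weakly decreasing rows a $1$ in column $c=n-j$ forces a $1$ in the column $c-1=n-j-1$ of the same row, so $b_{i,n-j-1}\ge b_{i,n-j}$ entrywise, while the single extra top entry $b_{j,n-j}$ of the longer diagonal contributes at most $1$. Bounding the two sides of the inequality termwise then gives $1+\sum_{i=j+1}^{i'}b_{i,n-j-1}\ge b_{j,n-j}+\sum_{i=j+1}^{i'}b_{i,n-j}=\sum_{i=j}^{i'}b_{i,n-j}$, as required. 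This simultaneously shows the parametrization is onto $\prod_{i=1}^{n-1}\{0,1,\dots,i\}$ and recovers the count $\prod_{i=1}^{n-1}(i+1)=n!$ matching Theorem~3.5.

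Next I would identify the order. Because each row is a block of $1$'s followed by a block of $0$'s, the comparison that $TBool_n$ induces reduces, row by row, to $r_i\le r_i'$; hence the induced order on the permutation triangles is exactly the componentwise order on the tuples $(r_1,\dots,r_{n-1})$. Since $r_i$ ranges freely over a chain of $i+1$ elements and the coordinates are independent, this yields $TBool_n^{\rm Perm}\cong[2]\times[3]\times\cdots\times[n]$, a distributive lattice, which is the first assertion. For the Bruhat statement I would transport the product order to $S_n$ along the bijection of Theorem~3.5 and prove the two containments separately: for weak $\subseteq TBool_n^{\rm Perm}$ it suffices to check on covers, verifying that each length-increasing adjacent transposition raises exactly one coordinate $r_i$ by $1$, so every weak cover is a relation in the product of chains; for $TBool_n^{\rm Perm}\subseteq$ strong Bruhat I would show that componentwise domination $r_i(u)\le r_i(v)$ for all $i$ forces $u\le v$ in Bruhat order by feeding the row data into the rank-function (``dot'') criterion via the explicit form of the bijection.

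I expect this last step, the containment in strong Bruhat order, to be the main obstacle. The product structure and the weak-order containment are essentially local and follow from the block shape of the rows, but the Bruhat comparison is global and requires pushing the inequalities $r_i(u)\le r_i(v)$ through the precise description of the bijection (and the statistics recorded in Theorem~3.5) to obtain monotonicity of the partial rank functions. A small check in $S_3$ already shows both containments are strict and, crucially, that the correct labeling of the grid is not the naive inversion table, so the argument must use the bijection of Theorem~3.5 rather than an ad hoc code.
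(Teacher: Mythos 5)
Your first two steps are sound and essentially identical to the paper's: the weakly decreasing (hence left-justified) rows make the defining inequality of Definition~\ref{def:bool} automatic, each row is determined by its number of ones chosen independently of the other rows, and reverse componentwise comparison of such triangles is exactly componentwise comparison of the tuples, giving $[2]\times[3]\times\cdots\times[n]$; this is how the paper argues as well, via Proposition~\ref{prop:nfact}. The Bruhat sandwich is where the proposal stops being a proof. For the weak-order containment you assert that each length-increasing adjacent transposition raises exactly one coordinate by one; this is true only for the convention of Definition~\ref{def:bruhat}, where one swaps the adjacent \emph{values} $i$ and $i+1$. The coordinate attached to row $i$ of the triangle is $\#\{k\le i : \sigma(k)>\sigma(i+1)\}$, and under an adjacent \emph{position} swap this code can change in two coordinates with one of them decreasing (compare $213\mapsto(1,0)$ with $231\mapsto(0,2)$), so the claim fails for that reading. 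You flag that ``the correct labeling of the grid is not the naive inversion table,'' but you never pin down which code it actually is; the paper does so in the proof of Theorem~\ref{thm:tsscppsn}, after which the weak-order containment is a one-line check.

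The more serious gap is the containment in the strong Bruhat order, which you explicitly defer (``I expect this last step\dots to be the main obstacle'') and for which you only name a strategy --- the rank-function/dot criterion --- without executing it. Deducing $u\le v$ in Bruhat order from componentwise domination of the codes via rank inequalities is a genuinely nontrivial global argument, and nothing in the proposal indicates how the hypotheses $r_i(u)\le r_i(v)$ would be converted into the required rank-matrix inequalities. The paper avoids this entirely with a local argument: a covering relation in $[2]\times[3]\times\cdots\times[n]$ turns the last one in some row of the boolean triangle into a zero, and tracking this change through the corresponding monotone triangles shows that it transposes exactly two values of the permutation while increasing the inversion number by exactly one --- hence it is a covering relation of the strong Bruhat order, and the containment of order relations follows by composing covers. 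Without either carrying out the rank-function computation or supplying this cover-by-cover analysis, the second assertion of the corollary remains unproved in your write-up.
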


\medskip
\textbf{The outline is as follows.}
In Section~\ref{sec:nilpmt}, we define totally symmetric self-complementary plane partitions and alternating sign matrices and give bijections within their respective families. In Section~\ref{subsec:asmfam}, we recall the standard bijection from an ASM to a \emph{monotone triangle}. We give, in Section~\ref{subsec:tsscppfam}, known bijections from a TSSCPP to a \emph{magog triangle} and a certain \emph{nest of non-intersecting lattice paths}. We transform this last object to a new object which we call a \emph{boolean triangle}.
 
In Section~\ref{sec:conseq}, we \emph{identify the permutation TSSCPP subset} in terms of the boolean triangles of Section~\ref{subsec:tsscppfam}, and then translate this condition to the other objects in the TSSCPP family, including magog triangles. In Section~\ref{subsec:bij}, we use this characterization to present in Theorem~\ref{thm:tsscppsn} a \emph{direct, statistic-preserving bijection} between this TSSCPP subset and permutation matrices.
In Section~\ref{sec:spec}, we discuss the outlook of the general ASM--TSSCPP bijection problem and compare the bijection of this paper  
with other bijections between subsets of ASM and TSSCPP.

In Section~\ref{sec:poset}, we contrast various poset structures on ASM and TSSCPP as well as their permutation and Catalan subposets.
In Section~\ref{subsec:AnTn}, we review known \emph{distributive lattices} on ASM and TSSCPP. In Section~\ref{subsec:tamari}, we define and study \emph{a new partial order on permutations} which appears as the permutation subposet of the TSSCPP distributive lattice from Section~\ref{subsec:AnTn}. We prove Theorems~\ref{thm:tamari} and \ref{thm:Catdistr}, identifying \emph{two Catalan subposets} of this permutation order.
In Section~\ref{subsec:TBool}, we study \emph{a new partial order on TSSCPP} obtained via boolean triangles. We show in Corollary~\ref{cor:TBool} that the subposet of permutation boolean triangles is an especially nice distributive lattice that sits between the weak and strong Bruhat orders. In Section~\ref{subsec:summary}, we summarize the results of Section~\ref{sec:poset}. 

\section{The objects and their alter egos: ASM / monotone triangle, TSSCPP / non-intersecting lattice paths / boolean triangle}
\label{sec:nilpmt}

In this section, we first define alternating sign matrices and recall the standard bijection to monotone triangles. We then define totally symmetric self-complementary plane partitions and give bijections with non-intersecting lattice paths and new objects we call boolean triangles.
Then in Section~\ref{s:sn}, we give a bijection in the permutation case via these intermediary objects.

\subsection{The ASM family}
\label{subsec:asmfam}
\begin{definition}[\cite{MRRANDREWSCONJ}]
An \emph{alternating sign matrix (ASM)} is a square matrix with entries in $\{0, 1, -1\}$ whose rows and columns each sum to one and such that the nonzero entries along each row or column alternate in sign.
\end{definition}

\begin{figure}[htbp]
\scalebox{.9}{
$
\left( 
\begin{array}{rrr}
1 & 0 & 0 \\
0 & 1 & 0\\
0 & 0 & 1
\end{array} \right)
\left( 
\begin{array}{rrr}
1 & 0 & 0 \\
0 & 0 & 1\\
0 & 1 & 0
\end{array} \right)
\left( 
\begin{array}{rrr}
0 & 1 & 0 \\
1 & 0 & 0\\
0 & 0 & 1
\end{array} \right)
\left( 
\begin{array}{rrr}
0 & 1 & 0 \\
1 & -1 & 1\\
0 & 1 & 0
\end{array} \right)$}

\vspace{.1in}
\scalebox{.9}{$
\left( 
\begin{array}{rrr}
0 & 1 & 0 \\
0 & 0 & 1\\
1 & 0 & 0
\end{array} \right)
\left( 
\begin{array}{rrr}
0 & 0 & 1 \\
1 & 0 & 0\\
0 & 1 & 0
\end{array} \right)
\left( 
\begin{array}{rrr}
0 & 0 & 1 \\
0 & 1 & 0\\
1 & 0 & 0
\end{array} \right)
$
}
\caption{The seven $3\times 3$ ASM}
\label{fig:3x3asms}
\end{figure}

See Figure~\ref{fig:3x3asms} for the seven $3\times 3$ ASM. It is clear that the alternating sign matrices with no negative  entries are the permutation matrices.

Alternating sign matrices are well known to be in bijection with monotone triangles, which are \emph{strict Gelfand-Tsetlin patterns} with \emph{highest weight} $1~2~3~\cdots~n$. A direct definition is below.
\begin{definition}[\cite{MRRASMDPP}]
\label{def:mt}
A \emph{monotone triangle} of order $n$
is a triangular array of 
%$i$~integers in row $i$ for all $1\le i\le n$, 
integers $a_{i,j}$ for $1\leq i\leq n$, $n-i\leq j\leq n-1$ with bottom row entries $a_{n,j}=j+1$ and all other $a_{i,j}$ satisfying $a_{i+1,j-1} \le a_{i,j} \le a_{i+1,j} \mbox{ and } a_{i,j} < a_{i,j+1}$.
\end{definition}

\begin{figure}[htbp]
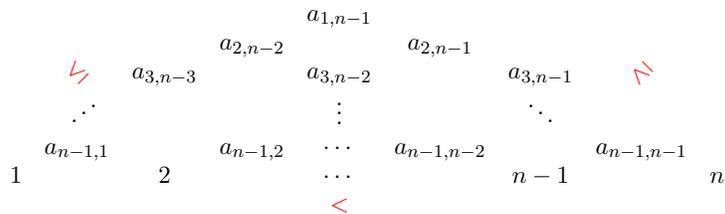

\begin{center}
\scalebox{.85}{
$\begin{array}{ccccccccccc}
  & & & & & a_{1,n-1} & & & & & \\
  & & & & a_{2,n-2} & & a_{2,n-1} & & & & \\
  & &\textcolor{red}{\rotatebox[origin=c]{45}{$\mathbf{\leq}$}} & a_{3,n-3} & & a_{3,n-2} & & a_{3,n-1} & \textcolor{red}{\rotatebox[origin=c]{135}{$\mathbf{\geq}$}} & & \\
  &  & \mbox{ }\iddots &  & & \vdots & &   \ddots & &  &\\
  &  & a_{n-1,1} & & a_{n-1,2} & \cdots & a_{n-1,n-2} & & a_{n-1,n-1} & &\\
  & 1 & & 2 & & \cdots & &  n-1 & & n & \\
  & & & & & \textcolor{red}{\mathbf{<}} & & & & &
\end{array}$}
\end{center}
\caption{A generic monotone triangle}
\label{fig:montri}
\end{figure}

\begin{figure}[htbp]
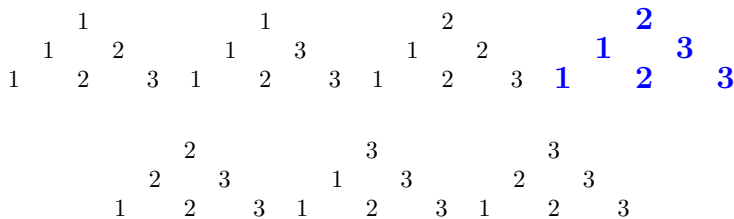

\begin{center}
\scalebox{.85}{
$\begin{array}{ccccc}
&& 1 &\\
&1 & & 2\\
1&&2&&3
\end{array}$
%\hspace{.2in}
$\begin{array}{ccccc}
&& 1 &\\
&1 & &3\\
1&&2&&3
\end{array}$
%\hspace{.2in}
$\begin{array}{ccccc}
&& 2 &\\
&1 & &2\\
1&&2&&3
\end{array}$
%\hspace{.2in}
%\hspace{.2in}
$\begin{array}{ccccc}
&& \Large{\textcolor{blue}{\textbf{2}}} &\\
&\Large{\textcolor{blue}{\textbf{1}}} & &\Large{\textcolor{blue}{\textbf{3}}}\\
\Large{\textcolor{blue}{\textbf{1}}}&&\Large{\textcolor{blue}{\textbf{2}}}&&\Large{\textcolor{blue}{\textbf{3}}}
\end{array}$}
\vspace{.2in}

\scalebox{.85}{
$\begin{array}{ccccc}
&& 2 &\\
&2 & &3\\
1&&2&&3
\end{array}$
%\hspace{.2in}
$\begin{array}{ccccc}
&& 3 &\\
&1 & &3\\
1&&2&&3
\end{array}$
%\hspace{.2in}
$\begin{array}{ccccc}
&& 3 &\\
&2 & &3\\
1&&2&&3
\end{array}$
}
\end{center}
\caption{The seven monotone triangles of order three, listed in the order corresponding to Figure~\ref{fig:3x3asms}.  The non-permutation is shown in bold.}
\label{fig:mt3}
\end{figure}

\begin{proposition}[\cite{MRRASMDPP}]
Monotone triangles of order $n$ are in bijection with $n\times n$ ASM.
\end{proposition}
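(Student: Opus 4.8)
The plan is to construct the bijection explicitly through partial column sums and then to verify that the defining conditions on each side match exactly. Given an $n\times n$ ASM $A=(A_{k,\ell})$, I would first form the top-to-bottom partial column sums $B_{i,\ell}=\sum_{k=1}^{i}A_{k,\ell}$. The first key fact is that every $B_{i,\ell}\in\{0,1\}$: reading down a fixed column, the nonzero entries of $A$ alternate in sign and (by the column condition together with the sum being $1$) begin with $+1$, so the running sum never leaves $\{0,1\}$. Since each of the top $i$ rows of $A$ sums to $1$, we get $\sum_{\ell}B_{i,\ell}=i$, so exactly $i$ columns satisfy $B_{i,\ell}=1$. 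Let $S_i=\{\ell : B_{i,\ell}=1\}$, a set of size $i$.

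Next I would define the forward map by declaring the $i$-th row of the triangle (the row with $i$ entries) to be the elements of $S_i$ listed in increasing order. Each row is then strictly increasing by construction, and the bottom row $S_n=\{1,\dots,n\}$ is forced because every full column sum of $A$ equals $1$; this is the required fixed bottom row. The substantive point is the interleaving axiom $a_{i+1,j-1}\le a_{i,j}\le a_{i+1,j}$, which I would derive from the observation that $B_{i+1,\ell}-B_{i,\ell}=A_{i+1,\ell}$ is supported exactly on the $\pm1$ positions of row $i+1$, so $S_{i+1}$ is obtained from $S_i$ by deleting the $-1$ columns and inserting the $+1$ columns of that ASM row; the alternating-sign condition along the row is precisely what forces the size-$i$ set to interleave the size-$(i+1)$ set.

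To finish I would exhibit the inverse. Given a monotone triangle I read its rows as nested indicator sets, recovering $S_1,\dots,S_n$ with $|S_i|=i$, and set $A_{i,\ell}=\mathbf{1}[\ell\in S_i]-\mathbf{1}[\ell\in S_{i-1}]$, with the convention $S_0=\varnothing$. This lands in $\{-1,0,1\}$; each row sum telescopes to $|S_i|-|S_{i-1}|=1$ and each column sum telescopes to $\mathbf{1}[\ell\in S_n]-\mathbf{1}[\ell\in S_0]=1$, while the interleaving axiom translates exactly into the alternation of signs along each row (and, reading the column sequence $B_{1,\ell},\dots,B_{n,\ell}$, along each column). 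That the two constructions are mutually inverse is then a direct unwinding of $A_{i,\ell}=B_{i,\ell}-B_{i-1,\ell}$.

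The main obstacle is the combinatorial equivalence that sits at the heart of both directions: that the monotone-triangle conditions on two consecutive rows (strict increase plus interleaving) are equivalent to the corresponding matrix row lying in $\{-1,0,1\}$ with nonzero entries alternating in sign and summing to $1$. I would isolate this as a small lemma: a size-$(i+1)$ set $L$ together with an interleaving size-$i$ subset-pattern $U$ corresponds bijectively to the $\{-1,0,1\}$-vector $\mathbf{1}[\cdot\in L]-\mathbf{1}[\cdot\in U]$ whose nonzero entries, read left to right, alternate $+1,-1,\dots,+1$, and prove it by induction on $|L\,\triangle\,U|$. Once this equivalence is in hand, the $\{0,1\}$ bound on partial sums and the telescoping of row and column sums make the rest routine.
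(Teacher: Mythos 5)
Your proposal is correct and follows essentially the same route as the paper: both construct the triangle by recording, for each row $i$, the columns whose top-to-bottom partial sum equals $1$, derive the strict increase and fixed bottom row from the row/column sum conditions, obtain the interleaving inequalities from the alternating-sign condition, and invert by differencing consecutive rows of partial sums. Your write-up simply makes explicit (via the $\{0,1\}$ bound on partial sums and the isolated interleaving lemma) the details the paper compresses into ``by the alternating condition'' and ``clearly invertible.''
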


\begin{proof}
To create row $i$ of a monotone triangle $a$, in row $i$ of the corresponding ASM $A$, note which columns have a partial sum (from the top) of 1 in that row. Record the numbers of the columns in which this occurs in increasing order, thus $a_{i,j} < a_{i,j+1}$ by construction. Since each column of $A$ sums to one, the last row of the monotone triangle will record all column numbers $1~2~\cdots n$. Finally, by the alternating condition of the ASM, the monotone triangle entries will satisfy the diagonal inequalities $a_{i+1,j-1} \le a_{i,j} \le a_{i+1,j}$. This process is clearly invertible and so is a bijection.
\end{proof}

See Figures~\ref{fig:montri} and~\ref{fig:mt3}. This bijection yields the following easy corollary.
 \begin{corollary}[\cite{MRRASMDPP}]
 \label{cor:negativeone}
Monotone triangle entries $a_{i,j}$ satisfying the strict diagonal inequalities $a_{i,j-1} < a_{i-1,j} < a_{i,j}$ are associated with the $-1$ entries of the corresponding ASM. 
\end{corollary}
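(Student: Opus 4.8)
The plan is to read the corollary off directly from the bijection just established, by tracking how the set of ``partial-sum-one'' columns evolves from one row of the ASM to the next. First I would fix the ASM $A$ and, following the proof of the preceding proposition, set $S_i$ to be the set of columns whose partial sum from the top through row $i$ equals $1$. Then row $i$ of the monotone triangle is precisely the list of elements of $S_i$ written in increasing order, and $|S_i|=i$ since the first $i$ rows of $A$ sum to $i$ in total.

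The key structural observation I would establish is that each column's running partial sum stays in $\{0,1\}$: this is exactly the alternating sign condition read down a column, since the first nonzero entry encountered from the top must be $+1$ and the nonzeros alternate. Consequently, passing from $S_{i-1}$ to $S_i$ (that is, incorporating row $i$ of $A$), a column enters $S_i$ exactly when row $i$ has a $+1$ there (partial sum $0\to 1$) and leaves $S_i$ exactly when row $i$ has a $-1$ there (partial sum $1\to 0$). Thus $S_{i-1}\setminus S_i$ is precisely the set of columns carrying a $-1$ in row $i$ of $A$.

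Next I would translate this set difference into the strict inequalities. An entry $a_{i-1,j}$ of row $i-1$ is a value $v\in S_{i-1}$, and by the (weak) interlacing it satisfies $a_{i,j-1}\le v\le a_{i,j}$. If $v\in S_i$, then $v$ appears in the sorted row $i$ and is forced to equal $a_{i,j-1}$ or $a_{i,j}$; if instead $v\notin S_i$, then $v$ is absent from row $i$ and lies strictly between its two neighbors, giving $a_{i,j-1}<v=a_{i-1,j}<a_{i,j}$. Hence the strict double inequality holds at $a_{i-1,j}$ if and only if $v\in S_{i-1}\setminus S_i$, i.e. if and only if that column carries a $-1$ in row $i$. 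Since distinct $-1$'s lie in distinct columns, this assignment is a bijection between the strict-inequality configurations occurring between rows $i-1$ and $i$ and the $-1$ entries in row $i$; ranging over all $i$ (and noting row $1$ contains no $-1$) yields the claimed global bijection.

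The main obstacle I anticipate is bookkeeping rather than conceptual. One must verify cleanly that the partial column sums never exceed $1$ nor dip below $0$, so that ``present in $S_{i-1}$ but absent from $S_i$'' is genuinely equivalent to ``$-1$ in row $i$,'' and one must check that the interlacing indices line up so that each sandwiched value of row $i-1$ corresponds to exactly one strict configuration and is never miscounted when a single row of $A$ happens to contain several $-1$ entries.
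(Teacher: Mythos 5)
Your proof is correct and follows exactly the route the paper intends: the paper states this as an immediate consequence of the partial-sum bijection with no separate proof, and your argument simply fills in the details of that bijection (columns with partial sum $1$ form row $i$, the alternating condition keeps partial sums in $\{0,1\}$, and a value of row $i-1$ absent from row $i$ is precisely a strictly sandwiched entry corresponding to a $-1$). No gaps; the bookkeeping you flag as a worry is handled correctly.
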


We will need the notion of \emph{inversion number} in an alternating sign matrix. First, recall that in a permutation $i\rightarrow \sigma(i)$, the \emph{inversion number} is the number of pairs $(i,j)$ such that $i<j$ and $\sigma(j)<\sigma(i)$. We now recall the following definition of the inversion number of an alternating sign matrix; this definition extends the usual notion of inversion in a permutation. 

\begin{definition}\rm
\label{def:inv}(\cite{MRRASMDPP})
The \emph{inversion number} of an ASM $A$ is defined as $I(A)=\sum A_{ij} A_{k\ell}$ where the sum is over all $i,j,k,\ell$ such that $i>k$ and $j<\ell$.  
\end{definition}

Note there is some ambiguity in the literature regarding the definition of the inversion number of an ASM. We use the above definition found in~\cite{MRRASMDPP} and~\cite{BRESSOUDBOOK}; an alternative definition is this number minus the number of negative ones in the ASM, and is the definition introduced in~\cite{RobbinsRumsey} under the name \emph{positive inversions}. See p.\ 443 of~\cite{BehrendMultASM} for discussion. Note that in the permutation case, these two notions of inversion number are equivalent, so for this paper, the choice of convention is irrelevant.

There are many other objects in bijection with alternating sign matrices, such as height function matrices, fully-packed loop configurations, and square ice configurations; see~\cite{ProppManyFaces}.

\subsection{The TSSCPP family}
\label{subsec:tsscppfam}
There are a few different ways to define plane partitions, including as two-dimensional decreasing integer arrays and as three-dimensional stacks of cubes in a corner. We will need both of these perspectives, and so define both below.

\begin{definition}
\label{def:tsscpp}
A \emph{plane partition} is a two dimensional array of positive integers $\{t_{i,j}\}$ that weakly decreases across rows from left to right and down columns. We will sometimes take the array to be rectangular, in which case we complete the array to a rectangle by adding entries equal to zero. So we have the following $n\times m$ array satisfying $t_{i,j}\geq t_{i+1,j}\geq 0$ and $t_{i,j}\geq t_{i,j+1}\geq 0$.

\vspace{.1in}
\begin{center}
$\begin{array}{ccccc}
 t_{1,1} & t_{1,2} & t_{1,3} & \cdots & t_{1,m} \\
 t_{2,1} & t_{2,2} & t_{2,3} &  & t_{2,m} \\
 \vdots & & & & \vdots \\
 t_{n,1} & t_{n,2} & t_{n,3} & \cdots & t_{n,m} \\
\end{array}$
\end{center}
\end{definition}

\begin{figure}[htbp]
\begin{center}
\scalebox{.7}{$
\begin{array}{cccccc}
6&6&6&3&3&3\\
6&6&6&3&3&3\\
6&6&6&3&3&3\\
3&3&3&&&\\
3&3&3&&&\\
3&3&3&&&
\end{array} 
\hspace{.7cm}
\begin{array}{cccccc}
6&6&6&4&3&3\\
6&6&6&3&3&3\\
6&6&5&3&3&2\\
4&3&3&\Large{\textbf{\textcolor{red}{1}}}&&\\
3&3&3&&&\\
3&3&2&&&
\end{array} 
\hspace{.7cm}
\begin{array}{cccccc}
6&6&6&5&4&3\\
6&6&5&3&3&2\\
6&5&5&3&3&1\\
5&3&3&\Large{\textcolor{red}{\textbf{1}}}&\Large{\textbf{\textcolor{red}{1}}}&\\
4&3&3&1&&\\
3&2&1&&&
\end{array} 
\hspace{.7cm}
\begin{array}{cccccc}
6&6&6&5&4&3\\
6&6&5&4&3&2\\
6&5&4&3&2&1\\
5&4&3&\Large{\textbf{\textcolor{red}{2}}}&\Large{\textbf{\textcolor{red}{1}}}&\\
4&3&2&1&&\\
3&2&1&&&
\end{array} 
$}

\vspace{.1in}
\scalebox{.7}{$
\begin{array}{cccccc}
6&6&6&4&3&3\\
6&6&6&4&3&3\\
6&6&4&3&2&2\\
4&4&3&\Large{\textbf{\textcolor{red}{2}}}&&\\
3&3&2&&&\\
3&3&2&&&
\end{array} 
\hspace{.7cm}
\begin{array}{cccccc}
6&6&6&5&5&3\\
6&5&5&3&3&1\\
6&5&5&3&3&1\\
5&3&3&\Large{\textbf{\textcolor{red}{1}}}&\Large{\textbf{\textcolor{red}{1}}}&\\
5&3&3&1&\Large{\textbf{\textcolor{red}{1}}}&\\
3&1&1&&&
\end{array} 
\hspace{.7cm}
\begin{array}{cccccc}
6&6&6&5&5&3\\
6&5&5&4&3&1\\
6&5&4&3&2&1\\
5&4&3&\Large{\textbf{\textcolor{red}{2}}}&\Large{\textcolor{red}{\textbf{1}}}&\\
5&3&2&1&\Large{\textbf{\textcolor{red}{1}}}&\\
3&1&1&&&
\end{array} 
$}
\end{center}
\caption{TSSCPP inside a $6\times 6\times 6$ box as integer arrays, as in Definition~\ref{def:tsscpp}. The fundamental domain entries are shown in a larger bold font.}
\label{fig:cl10}
\end{figure}

\medskip
We can visualize a plane partition as a stack of unit cubes pushed into the corner of a room. If we identify the corner of the room with the origin and the room with the positive orthant and denote each unit cube by the coordinate of its corner farthest from the origin in $\mathbb{N}^3$, we obtain the following equivalent definition, by which we may also define various symmetry classes.

\begin{definition}
\label{def:cubes}
 A \emph{plane partition} $\pi$ is a finite set of positive integer lattice points $(i,j,k)$ such that if $(i,j,k)\in\pi$ and $1\le i'\le i$, $1\le j'\le j$, and $1\le k'\le k$ then $(i',j',k')\in\pi$.

A plane partition $\pi$ is \emph{symmetric} if whenever $(i,j,k)\in \pi$ then $(j,i,k)\in \pi$ as well. $\pi$ is \emph{cyclically symmetric} if whenever $(i,j,k)\in\pi$ then $(j,k,i)\in\pi$ as well. 
A plane partition is \emph{totally symmetric} if 
it is both symmetric and cyclically symmetric, so that  
whenever $(i,j,k)\in\pi$ then all six permutations of $(i,j,k)$ are also in $\pi$.

 A plane partition is \emph{self-complementary} inside a given bounding box $a\times b\times c$ if it is equal to its complement in the box, that is, the collection of empty cubes in the box is of the same shape as the collection of cubes in the plane partition itself.  
A \emph{totally symmetric self-complementary plane partition} (TSSCPP) inside a 
$2n \times 2n \times 2n$ box is a plane partition which is both totally symmetric and self-complementary inside the box.
\end{definition}

\begin{figure}[htbp]
\begin{center}
$\begin{gathered}\scalebox{0.225}{
$\begin{tikzpicture}
\planepartition{{6,6,6,3,3,3},{6,6,6,3,3,3},{6,6,6,3,3,3},{3,3,3},{3,3,3},{3,3,3}}
\end{tikzpicture}
\hspace{.7cm}
\begin{tikzpicture}
\planepartition{{6,6,6,4,3,3},{6,6,6,3,3,3},{6,6,5,3,3,2},{4,3,3,1},{3,3,3},{3,3,2}}
\end{tikzpicture}
\hspace{.7cm}
\begin{tikzpicture}
\planepartition{{6,6,6,5,4,3},{6,6,5,3,3,2},{6,5,5,3,3,1},{5,3,3,1,1},{4,3,3,1},{3,2,1}}
\end{tikzpicture}
\hspace{.7cm}
\begin{tikzpicture}
\planepartition{{6,6,6,5,4,3},{6,6,5,4,3,2},{6,5,4,3,2,1},{5,4,3,2,1},{4,3,2,1},{3,2,1}}
\end{tikzpicture}$
}\end{gathered}$

$\begin{gathered}\scalebox{0.225}{
\begin{tikzpicture}
\planepartition{{6,6,6,4,3,3},{6,6,6,4,3,3},{6,6,4,3,2,2},{4,4,3,2},{3,3,2},{3,3,2}}
\end{tikzpicture}
\hspace{.7cm}
\begin{tikzpicture}
\planepartition{{6,6,6,5,5,3},{6,5,5,3,3,1},{6,5,5,3,3,1},{5,3,3,1,1},{5,3,3,1,1},{3,1,1}}
\end{tikzpicture}
\hspace{.7cm}
\begin{tikzpicture}
\planepartition{{6,6,6,5,5,3},{6,5,5,4,3,1},{6,5,4,3,2,1},{5,4,3,2,1},{5,3,2,1,1},{3,1,1}}
\end{tikzpicture}
}\end{gathered}.$
\end{center}
\caption{TSSCPP inside a $6\times 6\times 6$ box drawn as stacks of cubes in a corner, as in Definition~\ref{def:cubes}}
\label{ex:tikztsscpp}
\end{figure}
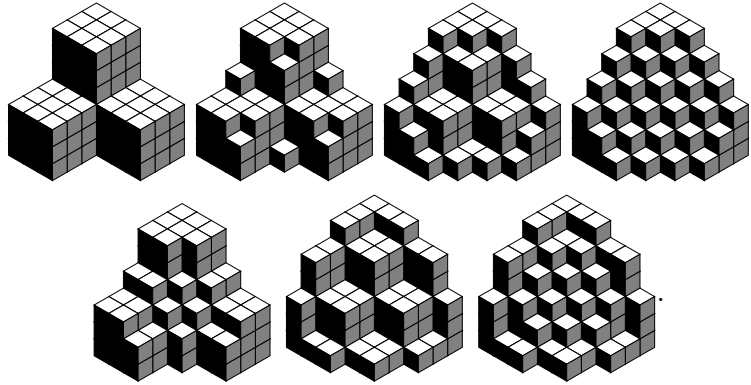

See Figures~\ref{fig:cl10} and \ref{ex:tikztsscpp} for the seven TSSCPP inside a $6\times 6\times 6$ box. 

\medskip
Because of the large amount of symmetry in a TSSCPP, we can record the defining information in a more compact form. That is, we take a \emph{fundamental domain}, which amounts to the triangular region shown in Figure~\ref{ex:funddomain3} and, more formally, the entries in Definition~\ref{def:funddomain} below. See also Figure~\ref{fig:cl10}.
\begin{definition}
\label{def:funddomain}
Considering a TSSCPP as the array of integers below with weakly decreasing rows and columns
\begin{center}
$\begin{array}{ccccc}
 t_{1,1} & t_{1,2} & t_{1,3} & \cdots & t_{1,2n} \\
 t_{2,1} & t_{2,2} & t_{2,3} &  & t_{2,2n} \\
 \vdots & & & & \vdots \\
 t_{2n,1} & t_{2n,2} & t_{2n,3} & \cdots  & t_{2n,2n} \\
\end{array}$
\end{center}
we define the \emph{fundamental domain} of a TSSCPP as the following entries:
\begin{center}
$\begin{array}{ccccc}
 t_{n+1,n+1} & t_{n+1,n+2} & t_{n+1,n+3} & \cdots & t_{n+1,2n} \\
  & t_{n+2,n+2} & t_{n+2,n+3} &  & t_{n+2,2n} \\
  & & \ddots & & \vdots \\
  & & & & t_{2n,2n}. \\
\end{array}$
\end{center}
\end{definition}

\medskip
We can record the information from the fundamental domain in either of two ways, namely, by counting the number of boxes in each stack or drawing the lattice paths traced out at each level. Under a slight deformation, the first way corresponds to the \emph{magog triangles} of Definition~\ref{def:magog} below. The second way corresponds to certain \emph{nests of non-intersecting lattice paths} described in Proposition~\ref{prop:nilp}.

\begin{figure}
\begin{center}
\includegraphics[scale=.56]{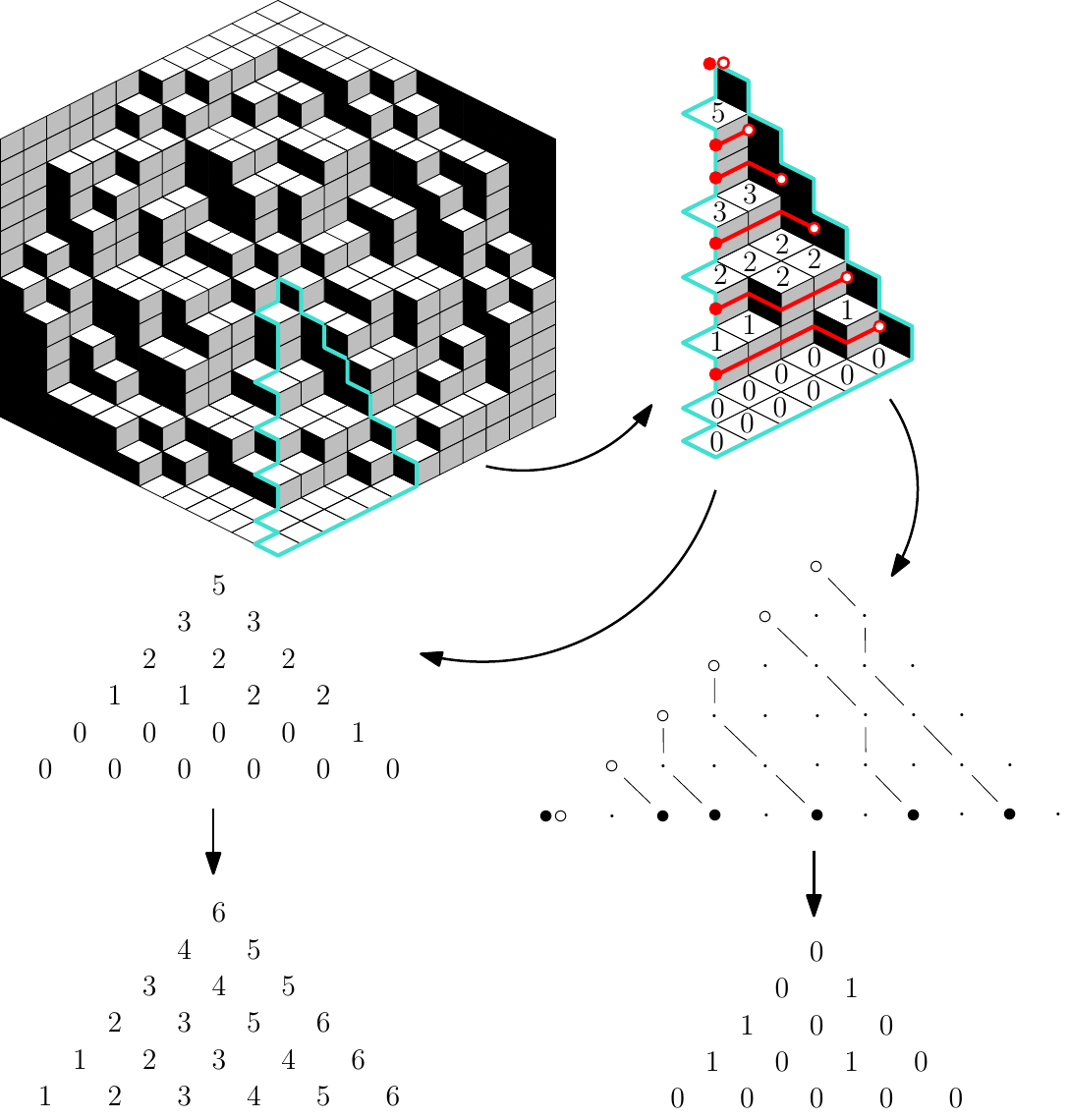}
\end{center}
\caption{A TSSCPP (upper left) and its fundamental domain (upper right), with its corresponding magog triangle (lower left), nest of non-intersecting lattice paths (middle right), and boolean triangle (lower right)}
\label{ex:funddomain3}
\end{figure}

\begin{definition}
\label{def:magog}
A \emph{magog triangle} of order $n$
is a triangular array of integers $\alpha_{i,j}$ for $1\leq i\leq n$, $n-i\leq j\leq n-1$, such that the bottom row entries are given by $\alpha_{n,j}=j+1$ and all other $\alpha_{i,j}$ satisfy $\alpha_{i,j}\leq n$,  $\alpha_{i,j} +1\geq \alpha_{i+1,j} \mbox{ and } \alpha_{i,j} < \alpha_{i,j+1}$ (note these conditions also imply $\alpha_{i+1,j-1} \le \alpha_{i,j}$).
\end{definition}

In~\cite{MRR3}, W.~Mills, D.~Robbins, and H.~Rumsey give a bijection between the magog triangles of the above definition and TSSCPP. In D.~Zeilberger's proof of the enumeration of alternating sign matrices~\cite{ZEILASM}, he gave these triangles the name \emph{magog triangles} (and, likewise, called monotone triangles \emph{gog triangles}). See Figures~\ref{fig:magogtri} and \ref{fig:magog3}.

\begin{figure}[htbp]
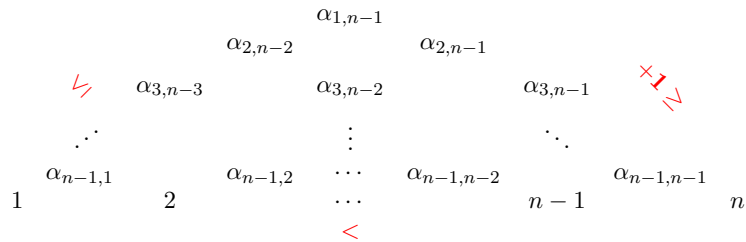

\begin{center}
\scalebox{.85}{
$\begin{array}{ccccccccccc}
  & & & & & \alpha_{1,n-1} & & & & & \\
  & & & & \alpha_{2,n-2} & & \alpha_{2,n-1} & & & & \\
  & &\textcolor{red}{\rotatebox[origin=c]{45}{$\mathbf{\leq}$}} & \alpha_{3,n-3} & & \alpha_{3,n-2} & & \alpha_{3,n-1} & \textcolor{red}{\rotatebox[origin=c]{315}{$\mathbf{ + 1 \geq}$}} & & \\
  &  & \mbox{ }\iddots &  & & \vdots & &   \ddots & &  &\\
  &  & \alpha_{n-1,1} & & \alpha_{n-1,2} & \cdots & \alpha_{n-1,n-2} & & \alpha_{n-1,n-1} & &\\
  & 1 & & 2 & & \cdots & &  n-1 & & n & \\
  & & & & & \textcolor{red}{\mathbf{<}} & & & & &
\end{array}$}
\end{center}
\caption{A generic magog triangle; the ``\textcolor{red}{$+1\geq$}'' symbol represents the inequality $\alpha_{i,j} +1\geq \alpha_{i+1,j}$}
\label{fig:magogtri}
\end{figure}

\begin{proposition}[\cite{MRR3}]
\label{prop:magog}
Magog triangles of order $n$ are in bijection with TSSCPP inside a $2n\times 2n\times 2n$ box.
\end{proposition}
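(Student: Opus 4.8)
The plan is to realize both sides as reduced data and connect them by an explicit, invertible re-indexing. The essential reduction is already suggested by Definition~\ref{def:funddomain}: a TSSCPP inside the $2n\times 2n\times 2n$ box is completely determined by its fundamental domain, the triangular block of heights $t_{n+i,n+j}$. So I would first prove that the assignment (TSSCPP) $\mapsto$ (fundamental domain) is a bijection onto the set of triangular height arrays that actually occur. For injectivity, total symmetry forces relations among the $t_{i,j}$ (invariance under transposition together with the cyclic condition of Definition~\ref{def:cubes}), while self-complementarity gives $t_{i,j}=2n-t_{2n+1-i,\,2n+1-j}$; together these express every $t_{i,j}$ in terms of the fundamental-domain entries. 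For surjectivity, I would show that any fundamental domain satisfying the constraints identified below extends, via these same symmetry relations, to a genuine down-closed set of cubes that is totally symmetric and self-complementary in the box.

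Second, I would extract the inequalities cutting out the valid fundamental domains, which split into two families. The \emph{interior} inequalities are inherited directly from Definition~\ref{def:tsscpp}: the fundamental domain is a sub-block of a plane partition, so its rows and columns weakly decrease. The \emph{boundary} inequalities are the new content: they come from demanding that the reflected and complemented copies of the fundamental domain glue together into an honest plane partition, i.e.\ that the weak-decrease conditions are not violated across the seams where the fundamental domain meets its images under the symmetries and under complementation. It is precisely these seam conditions that will produce an off-by-one relation rather than just more weak inequalities.

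Third, I would write down the deformation. This is an affine change of the index set carrying the fundamental-domain shape to the magog shape of Figure~\ref{fig:magogtri} (row $i$ carrying $i$ entries, with a frozen bottom row), combined with a complementation of the entries and a staircase shift $\alpha_{i,j}=(\text{complement of the corresponding }t\text{-entry})+(\text{position offset})$. The staircase shift is what converts the weak row inequalities of the fundamental domain into the \emph{strict} inequalities $\alpha_{i,j}<\alpha_{i,j+1}$. I would then verify term by term that under this deformation the interior conditions become $\alpha_{i+1,j-1}\le\alpha_{i,j}$ and $\alpha_{i,j}<\alpha_{i,j+1}$, while the self-complementarity seam condition becomes $\alpha_{i,j}+1\ge\alpha_{i+1,j}$, and that the forced edge heights of the fundamental domain become the bottom row $\alpha_{n,j}=j-1$. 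This matches Definition~\ref{def:magog} exactly.

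Invertibility is then immediate, since the deformation is an explicit bijection of triangular arrays composed with an invertible affine map of values: the inverse sends a magog triangle back to a fundamental domain satisfying the second step's constraints, and the first step reconstitutes the full TSSCPP. The main obstacle, and the step deserving genuine care, is the second one: showing that the symmetry-plus-complementarity seam conditions translate into \emph{precisely} the magog inequalities — in particular producing the strict inequality and the $+1$ slack in $\alpha_{i,j}+1\ge\alpha_{i+1,j}$, with no further constraints — so that valid fundamental domains and magog triangles correspond exactly, rather than one being a proper subset of the other.
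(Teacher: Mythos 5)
Your plan follows the same route as the paper's proof, which simply records the map $\alpha_{i,n-j}=t_{n+j,n+i}+i-j+1$ — restriction to the fundamental domain followed by a staircase shift along the diagonals — and defers all verification to \cite{MRR3}; the steps you outline (the fundamental domain determines the TSSCPP, the seam conditions from self-complementarity produce the $+1$ slack and the strictness, and the affine re-indexing matches Definition~\ref{def:magog} exactly) are precisely the suppressed details, including the one genuinely delicate point. The only caution is that with the fundamental domain of Definition~\ref{def:funddomain} the entries are transposed but \emph{not} complemented (the index swap already reverses the monotonicity), so your term-by-term check would need to drop the complementation of entries or else choose a differently oriented fundamental domain.
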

\begin{proof}
The magog triangle $\alpha$ corresponding to a TSSCPP $t$ is defined by rotating the fundamental domain and then adding $1,2,3,\ldots, n$ to the diagonals. That is, $\alpha_{i,n-j}=t_{n+j,n+i}+i-j+1$. See Figure~\ref{ex:funddomain3}.
\end{proof}

\begin{figure}[htbp]
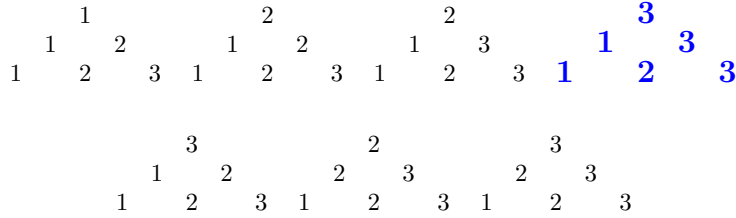

\begin{center}
\scalebox{.85}{
$\begin{array}{ccccc}
&& 1 &\\
&1 & & 2\\
1&&2&&3
\end{array}$
%\hspace{.2in}
$\begin{array}{ccccc}
&& 2 &\\
&1 & &2\\
1&&2&&3
\end{array}$
%\hspace{.2in}
$\begin{array}{ccccc}
&& 2 &\\
&1 & &3\\
1&&2&&3
\end{array}$
%\hspace{.2in}
$\begin{array}{ccccc}
&& \Large{\textcolor{blue}{\textbf{3}}} &\\
&\Large{\textcolor{blue}{\textbf{1}}} & &\Large{\textcolor{blue}{\textbf{3}}}\\
\Large{\textcolor{blue}{\textbf{1}}}&&\Large{\textcolor{blue}{\textbf{2}}}&&\Large{\textcolor{blue}{\textbf{3}}}
\end{array}$}
\vspace{.2in}

\scalebox{.85}{
$\begin{array}{ccccc}
&& 3 &\\
&1 & &2\\
1&&2&&3
\end{array}$
%\hspace{.2in}
$\begin{array}{ccccc}
&& 2 &\\
&2 & &3\\
1&&2&&3
\end{array}$
%\hspace{.2in}
$\begin{array}{ccccc}
&& 3 &\\
&2 & & 3\\
1&&2&&3
\end{array}$
}
\end{center}
\caption{The seven magog triangles of order three, listed in the order corresponding to Figures~\ref{fig:cl10}, \ref{ex:tikztsscpp}, and, by the permutation case bijection of Theorem~\ref{thm:tsscppsn}, Figures~\ref{fig:3x3asms} and \ref{fig:mt3}. The non-permutation is shown in bold.}
\label{fig:magog3}
\end{figure}

In~\cite{Doran}, W.~Doran gave a bijection from TSSCPP inside a $2n\times 2n\times 2n$ box to certain nests of non-intersecting lattice paths. In~\cite{DiFrancesco_NILP}, P.~Di Francesco gave a bijection to an equivalent collection of non-intersecting lattice paths. Either bijection proceeds by taking a fundamental domain of the TSSCPP, and instead of reading the number of boxes in each stack, one draws the paths going alongside those boxes. This yields a collection of non-intersecting paths with two types of steps. With a slight further deformation, namely, a quarter-turn counterclockwise, we obtain that the following objects are in bijection with TSSCPP. This is equivalent to vertically reflecting the paths in \cite{DiFrancesco_NILP}. See Figures~\ref{ex:funddomain3} and \ref{fig:NILPn3}.

\begin{proposition}[\cite{Doran}, \cite{DiFrancesco_NILP}]
\label{prop:nilp}
Totally symmetric self-complementary plane partitions inside a $2n \times 2n \times 2n$ box are in bijection with nests of non-intersecting lattice paths starting at $(i,i)$, $i=1,2,\ldots,n-1$, and ending at positive integer points on the $x$-axis of the form $(r_i, 0)$, $i = 1, 2,\ldots, n-1$, making only vertical down steps $(0, -1)$ or southeast diagonal steps $(1, -1)$.
\end{proposition}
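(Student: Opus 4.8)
The plan is to follow the route indicated just before the statement: pass from a TSSCPP to its fundamental domain, then trace the non-intersecting paths along the boundaries of the cube stacks. First I would note that total symmetry together with self-complementarity makes a TSSCPP in the $2n\times 2n\times 2n$ box reconstructible from its fundamental domain, the triangular array of heights $\{t_{n+i,n+j}\}$ of Definition~\ref{def:funddomain}; the self-complementarity relation $t_{i,j}+t_{2n+1-i,2n+1-j}=2n$ restricted to the symmetric region, together with the weak decrease of rows and columns, pins down exactly which triangular height arrays occur. Hence it suffices to biject these admissible fundamental domains with the stated nests of lattice paths.

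Next I would construct the paths. Viewing the fundamental domain as a staircase of stacks, I would draw, at each height level, the contour that runs alongside the exposed faces of the cubes: a vertical down step $(0,-1)$ marks where the profile drops and a southeast step $(1,-1)$ marks where it steps over to the next column. The diagonal shape of the fundamental domain forces these contours to organize into $n-1$ paths, with the $i$-th path beginning at $(i,i)$ and descending until it first meets the floor at a point $(r_i,0)$ on the $x$-axis. A final quarter-turn counterclockwise places the nest in the orientation of the statement; this matches the paths of~\cite{DiFrancesco_NILP} up to a vertical reflection.

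\textbf{The heart of the argument}, and the step I expect to be the main obstacle, is showing the paths are non-intersecting and that this is forced precisely by the plane-partition inequalities: adjacent level contours can meet only when two neighboring entries of the fundamental domain violate a weak-decrease (or self-complementarity) inequality, so strict nesting of the paths is equivalent to admissibility of the height array. I would establish this equivalence in both directions and simultaneously check that every endpoint configuration $(r_i,0)$ permitted by the path rules is realized by some admissible fundamental domain.

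Finally, invertibility is essentially built in. Given a nest, one undoes the quarter-turn, reads the contour heights back into a triangular array, verifies the fundamental-domain inequalities, and extends by total symmetry and self-complementarity to a unique TSSCPP. Since every step is a reversible combinatorial recoding, the two maps are mutually inverse, completing the bijection.
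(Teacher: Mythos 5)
Your outline follows essentially the same route the paper takes: pass to the fundamental domain of Definition~\ref{def:funddomain}, trace the level contours of the cube stacks as lattice paths with vertical and southeast steps, apply the quarter-turn, and observe that the plane-partition inequalities correspond to the paths being non-intersecting. Note, however, that the paper does not actually prove this proposition --- it cites \cite{Doran} and \cite{DiFrancesco_NILP} and gives only the descriptive paragraph you are echoing --- and your own ``heart of the argument'' (that non-intersection is equivalent to admissibility of the height array, and that every endpoint configuration is realized) is likewise announced rather than carried out, so your write-up is at the same level of detail as the paper's sketch rather than a complete independent proof.
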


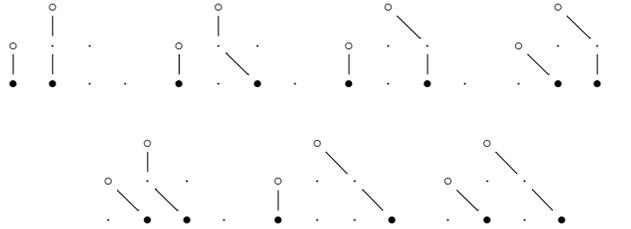
\begin{figure}[htbp]
\raisebox{\depth}{\scalebox{1}[-1]{
\scalebox{0.62}{
$\xymatrix @-1.0pc {
 \bullet \ar@{-}[d] & \bullet \ar@{-}[d] & \cdot & \cdot  \\
 \circ & \cdot \ar@{-}[d] & \cdot  \\
  & \circ &
}$
$\xymatrix @-1.0pc {
 & \bullet \ar@{-}[d] & \cdot & \bullet \ar@{-}[dl]  & \cdot  \\
 & \circ & \cdot \ar@{-}[d] & \cdot  \\
 &  & \circ &
}$
$\xymatrix @-1.0pc {
 & \bullet \ar@{-}[d] & \cdot & \bullet \ar@{-}[d] & \cdot  \\
 & \circ & \cdot & \cdot \ar@{-}[dl]  \\
 &  & \circ &
}$
$\xymatrix @-1.0pc {
 & \cdot & \bullet \ar@{-}[dl] & \bullet \ar@{-}[d] & \cdot \\
 & \circ & \cdot & \cdot \ar@{-}[dl] \\
 &  & \circ &
}$}}}
\vspace{.2in}

\raisebox{\depth}{\scalebox{1}[-1]{
\scalebox{.62}{
$\xymatrix @-1.0pc {
 & \cdot & \bullet \ar@{-}[dl] & \bullet \ar@{-}[dl] & \cdot \\
 & \circ & \cdot \ar@{-}[d] & \cdot \\
 &  & \circ &
}$
$\xymatrix @-1.0pc {
 & \bullet \ar@{-}[d] & \cdot & \cdot & \bullet \ar@{-}[dl] \\
 & \circ & \cdot & \cdot \ar@{-}[dl] \\
 &  & \circ &
}$
$\xymatrix @-1.0pc {
 & \cdot & \bullet \ar@{-}[dl] & \cdot & \bullet \ar@{-}[dl] \\
 & \circ & \cdot & \cdot \ar@{-}[dl] \\
 &  & \circ &
}$
}
}}
\caption{The seven TSSCPP nests of non-intersecting lattice paths of order three, in the order corresponding to Figure~\ref{fig:magog3}}
\label{fig:NILPn3}
\end{figure}

In~\cite{DiFrancesco_NILP}, Di Francesco uses the Lindstr\"{o}m-Gessel-Viennot formula for counting non-intersecting lattice paths via a determinant evaluation to give an expression for the TSSCPP generating function with a weight of $\tau$ per vertical step.  
We will show that when restricted to permutation TSSCPP, the diagonal steps correspond to the inversions of the permutation, so the vertical steps correspond to non-inversions.
Note that a specialization of this $\tau$-weighted TSSCPP generating function equals the degree of the variety of strictly upper triangular $2n\times 2n$ complex matrices of vanishing square~\cite{difran_zinn06}. See also~\cite{difran_zinn08} and Section 4.2 of \cite{integrable}.

We replace up steps in the nest of non-intersecting lattice paths by ones and diagonal steps by zeros to obtain a new triangular integer array we call a \emph{TSSCPP boolean triangle}, which we will use in our main bijection in Section~\ref{subsec:bij}. See Figures~\ref{fig:booltri} and~\ref{fig:booln3}.

\begin{definition}
\label{def:bool}
A \emph{TSSCPP boolean triangle} of order $n$ is a triangular integer array $\{b_{i,j}\}$ for $1\leq i\leq n-1$, $n-i\leq j\leq n-1$ with entries in $\{0,1\}$ such that the diagonal partial sums satisfy 
%\begin{equation}
%\label{eq:par}
\[1+\displaystyle\sum_{i=j+1}^{i'} b_{i,n-j-1} \geq \displaystyle\sum_{i=j}^{i'} b_{i,n-j}.\]
%\end{equation}
\end{definition}

\medskip

\begin{proposition}
\label{prop:boolbij}
TSSCPP boolean triangles of order $n$ are in bijection with TSSCPP inside a 
$2n \times 2n \times 2n$ box.
\end{proposition}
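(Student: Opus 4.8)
The plan is to build the bijection as a composite. Since Proposition~\ref{prop:nilp} already identifies TSSCPP inside the $2n\times 2n\times 2n$ box with nests of non-intersecting lattice paths, it suffices to exhibit a bijection between these nests and the boolean triangles of Definition~\ref{def:bool}. A nest consists of $n-1$ paths $P_1,\dots,P_{n-1}$, where $P_c$ starts at $(c,c)$ and, using only steps $(0,-1)$ and $(1,-1)$, descends to the $x$-axis in exactly $c$ steps. I would record the steps of $P_c$, read from its start downward, as the $c$-th diagonal $b_{n-c,c},b_{n-c+1,c},\dots,b_{n-1,c}$ of the array, assigning $1$ to a vertical step and $0$ to a diagonal step. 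This matches the index set of Definition~\ref{def:bool} exactly, since diagonal $c$ has $c$ entries and there are $n-1$ diagonals; moreover, forgetting the non-intersecting hypothesis, it is plainly a bijection between \emph{arbitrary} such nests of down/southeast paths and \emph{arbitrary} $\{0,1\}$-valued triangular arrays of this shape. Thus the entire content of the proposition is to verify that the non-intersecting condition on the paths corresponds precisely to the diagonal partial-sum inequality.

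The key is to track the $x$-coordinate of each path at each integer height. Writing $v_c(h)$ for the number of vertical steps $P_c$ takes while descending from its start at height $c$ down to height $h$, a direct count gives $x_c(h)=2c-h-v_c(h)$, and $v_c(h)$ is exactly a partial sum down diagonal $c$: with $h=n-1-i'$ one has $v_c(h)=\sum_{i=n-c}^{i'}b_{i,c}$. I would then argue that, for these down/southeast paths, vertex-disjointness of the whole nest is equivalent to the strict left-to-right ordering $x_c(h)<x_{c+1}(h)$ at every integer height where both paths are defined, and that it suffices to impose this on consecutive paths. Substituting the formula for $x_c(h)$ collapses $x_{c-1}(h)<x_c(h)$ to the integer inequality $v_c(h)\le v_{c-1}(h)+1$, which, upon re-expanding the two partial sums down diagonals $c=n-j$ and $c-1=n-j-1$, both terminating at row $i'$, is precisely
\[
\sum_{i=j}^{i'} b_{i,n-j}\ \le\ 1+\sum_{i=j+1}^{i'} b_{i,n-j-1},
\]
the defining condition of a boolean triangle. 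Checking that the ranges of $j$ and $i'$ in Definition~\ref{def:bool} correspond exactly to the heights $0\le h\le c-1$ on which consecutive paths $P_{c-1},P_c$ coexist then completes the equivalence, and hence the bijection.

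I expect the main obstacle to be the geometric step rather than the algebra: justifying that vertex-disjointness for paths permitted to take diagonal steps reduces cleanly to strict ordering of the $x$-coordinates at the \emph{integer} heights (ruling out crossings that would otherwise occur along diagonal edges between lattice points), and that non-crossing of all consecutive pairs forces non-crossing of the entire nest. Once this is established, reconciling the ``$+1$'' shift and the exact summation bounds of the stated inequality with the path picture is purely a bookkeeping matter, and the inverse map is simply the same step-encoding read backwards.
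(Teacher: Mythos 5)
Your proposal is correct and is essentially the paper's own proof: the paper likewise composes with Proposition~\ref{prop:nilp}, replaces each vertical step by a one and each diagonal step by a zero, and asserts that the diagonal partial-sum inequality is equivalent to the paths being non-intersecting. The only difference is that the paper states this equivalence in one sentence, whereas you carry out the coordinate bookkeeping ($x_c(h)=2c-h-v_c(h)$, reduction to consecutive pairs, and the collapse to $v_c(h)\le v_{c-1}(h)+1$) that justifies it.
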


\begin{proof}
The bijection proceeds by replacing each vertical step in the nest of non-intersecting lattice paths with a one and each diagonal step with a zero. The inequality on the partial sums says that the partial sum of any northwest to southeast diagonal is not more than one larger than the partial sum of the diagonal to its left. This is equivalent to the condition that the lattice paths are non-intersecting.
\end{proof}

\begin{figure}[htbp]
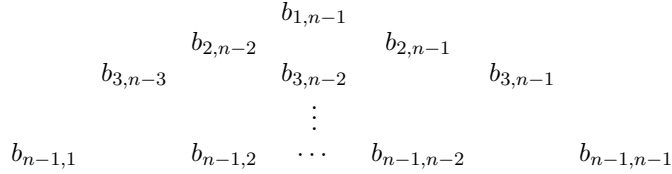

\begin{center}
\scalebox{.9}{
$\begin{array}{ccccccccccc}
  & & & & & b_{1,n-1} & & & & & \\
  & & & & b_{2,n-2} & & b_{2,n-1} & & & & \\
  & & & b_{3,n-3} & & b_{3,n-2} & & b_{3,n-1} & & & \\
  &  & &  & & \vdots & &  & &  &\\
  &  & b_{n-1,1} & & b_{n-1,2} & \cdots & b_{n-1,n-2} & & b_{n-1,n-1} & &
\end{array}$}
\end{center}
\caption{Indexing convention for entries in a generic TSSCPP boolean triangle}
\label{fig:booltri}
\end{figure}

See Figure~\ref{fig:booln3} for the seven TSSCPP boolean triangles of order three. Note that \scalebox{.8}{$\begin{array}{ccc}
& 1 &\\
0 & &1
\end{array}$}
violates the defining inequality condition and is thus not a TSSCPP boolean triangle.

\begin{figure}[htbp]
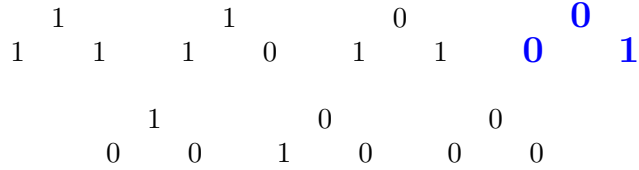

\begin{center}
$\begin{array}{ccc}
& 1 &\\
1 & & 1
\end{array}$
\hspace{.15in}
$\begin{array}{ccc}
& 1 &\\
1 & &0
\end{array}$
\hspace{.15in}
$\begin{array}{ccc}
& 0 &\\
1 & &1
\end{array}$
\hspace{.15in}
$\begin{array}{ccc}
& \Large{\textbf{\textcolor{blue}{0}}} &\\
\Large{\textbf{\textcolor{blue}{0}}} & &\Large{\textbf{\textcolor{blue}{1}}}
\end{array}$

\vspace{.15in}
$\begin{array}{ccc}
& 1 &\\
0 & &0
\end{array}$
\hspace{.15in}
$\begin{array}{ccc}
& 0 &\\
1 & &0
\end{array}$
\hspace{.15in}
$\begin{array}{ccc}
& 0 &\\
0 & &0
\end{array}$
\end{center}
\caption{The seven TSSCPP boolean triangles of order three, listed in the order corresponding to Figures~\ref{fig:cl10}, \ref{fig:magog3}, and~\ref{fig:NILPn3}, and Figure~\ref{fig:mt3} via the permutation case bijection of Theorem~\ref{thm:tsscppsn}. The non-permutation is shown in bold.}
\label{fig:booln3}
\end{figure}

%%%%%%%%%%%%%%%%%%%%%%%%%%%%%%%%%%%%%%%%%%%%%%%%%%%%%%%%%%%%
\section{A bijection on permutations}
%%%%%%%%%%%%%%%%%%%%%%%%%%%%%%%%%%%%%%%%%%%%%%%%%%%%%%%%%%%%
\label{s:sn}

In this section, we prove our first main result, Theorem~\ref{thm:tsscppsn}, giving a bijection between $n\times n$ permutation matrices and a subset of totally symmetric self-complementary plane partitions inside a $2n\times 2n\times 2n$ box which preserves the inversion number statistic and two boundary statistics. 
First, in Definition~\ref{def:permbool} and  Proposition~\ref{prop:nfact} we identify and enumerate the permutation subset of TSSCPP boolean triangles; we then translate to the other members of the TSSCPP family in Lemma~\ref{lem:permconfig} and Theorem~\ref{thm:permmagog}.

\subsection{Identification of permutation TSSCPP}
\label{sec:conseq}

\begin{definition}
\label{def:permbool}
Let \emph{permutation TSSCPP} of order $n$ be all TSSCPP inside a $2n\times 2n\times 2n$ box whose corresponding  boolean triangles have weakly decreasing rows. 
\end{definition}

The terminology \emph{permutation TSSCPP} is justified by Proposition~\ref{prop:nfact} below, which counts permutation TSSCPP. In Theorem~\ref{thm:tsscppsn}, we further justify this terminology by providing a statistic-preserving bijection between permutations and permutation TSSCPP. 
\begin{proposition}
\label{prop:nfact}
There are $n!$ permutation TSSCPP of order $n$.
\end{proposition}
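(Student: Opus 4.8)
The plan is to reduce the proposition to an elementary product formula by showing that the diagonal partial-sum inequality in Definition~\ref{def:bool} is automatically satisfied as soon as a boolean triangle has weakly decreasing rows. If this holds, then the permutation TSSCPP of order $n$ (Definition~\ref{def:permbool}) are in bijection with \emph{arbitrary} binary triangles with weakly decreasing rows, and these are trivial to count. First I would record the shape: row $i$ of a boolean triangle of order $n$ consists of the $i$ entries $b_{i,n-i}, b_{i,n-i+1}, \dots, b_{i,n-1}$, read left to right with increasing $j$. A weakly decreasing binary row of length $i$ must have the form $1^{a}0^{\,i-a}$ for some $0 \le a \le i$, so there are exactly $i+1$ choices for it. Consequently, once the inequality is shown to be free, the number of permutation TSSCPP is $\prod_{i=1}^{n-1}(i+1) = 2\cdot 3\cdots n = n!$, which is the claim.

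The main work, and the step I expect to be the only genuine obstacle, is verifying that the partial-sum inequality holds automatically. Writing $c = n-j$, the inequality compares the ``right'' diagonal (fixed column $c$), whose entries $b_{i,c}$ run over rows $i = j, j+1, \dots$, with the ``left'' diagonal (fixed column $c-1$), whose entries $b_{i,c-1}$ run over rows $i = j+1, j+2, \dots$; the right diagonal is longer by exactly one entry, its top entry $b_{j,c}$. The key observation is that for every row $i \ge j+1$ both entries $b_{i,c-1}$ and $b_{i,c}$ lie in row $i$, and since that row is weakly decreasing and $c-1 < c$, we have $b_{i,c-1} \ge b_{i,c}$. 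Combining this with $b_{j,c} \le 1$ gives, for each valid $i'$,
\[
\sum_{i=j}^{i'} b_{i,c} \;=\; b_{j,c} + \sum_{i=j+1}^{i'} b_{i,c} \;\le\; 1 + \sum_{i=j+1}^{i'} b_{i,c-1},
\]
which is precisely the defining inequality of Definition~\ref{def:bool}. Thus weakly decreasing rows force validity, and the set of permutation TSSCPP boolean triangles equals the set of all weakly-decreasing binary triangles.

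With the inequality disposed of, the enumeration is immediate: the rows may be chosen independently, row $i$ admitting $i+1$ weakly decreasing binary fillings, giving the product $\prod_{i=1}^{n-1}(i+1) = n!$. The only care required throughout is in correctly matching the two summation ranges to the northwest--southeast diagonal structure dictated by the indexing convention (so that the extra top entry sits in the right diagonal and is absorbed by the ``$+1$''); once that bookkeeping is pinned down, the term-by-term comparison above and the final count are routine.
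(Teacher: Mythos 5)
Your proof is correct and takes essentially the same approach as the paper's: both observe that weakly decreasing (equivalently, left-justified) rows make the defining partial-sum inequality of Definition~\ref{def:bool} automatic, and then count the $i+1$ independent choices per row to obtain $\prod_{i=1}^{n-1}(i+1)=n!$. Your term-by-term verification of the inequality is more explicit than the paper's one-line assertion, but it is the same underlying idea.
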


\begin{proof}
 The condition on a permutation TSSCPP boolean triangle that the rows be weakly decreasing means that all the ones must be left-justified. Thus the defining partial sum inequality of Definition~\ref{def:bool}
 is never violated. To construct a permutation TSSCPP, freely choose any number of left-justified ones in each row of the boolean triangle and the rest zeros; there are $i+1$ choices for row $i$, and the number of ones chosen in each row is independent of the choice for any other row.
\end{proof}

Though the viewpoint of boolean triangles is the most natural for characterizing permutation TSSCPP, we can use the TSSCPP family bijections discussed in the previous section to determine the permutation condition both on the magog triangle and directly on the plane partition. 

\begin{lemma}
\label{lem:permconfig}
Permutation TSSCPP, as integer arrays $t_{ij}$, $1\leq i,j\leq 2n$, are characterized as the TSSCPP such that there is no choice of integer $k\geq 0$ and fundamental domain indices $(i,j)$, $n+1\leq i\leq j\leq 2n-1$, 
such that \[t_{i,j} > t_{i,j+1} = t_{i+k,j+k+1} > t_{i+k+1,j+k+1}.\] 

Alternatively, as stacks of cubes in a corner, permutation TSSCPP are the TSSCPP with no configurations of the following form inside the fundamental domain:
\begin{center}
\includegraphics[scale=.75]{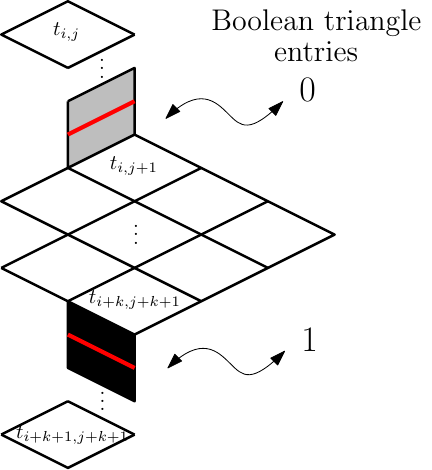} 
\end{center}

\end{lemma}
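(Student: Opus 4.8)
The plan is to prove both characterizations by transporting Definition~\ref{def:permbool} backwards along the chain of bijections in Propositions~\ref{prop:nilp} and~\ref{prop:boolbij} to the fundamental domain of the TSSCPP. First I would reduce the boolean-triangle condition to a single local prohibition. By Proposition~\ref{prop:boolbij} the entries of a boolean triangle are exactly the step-types of the nest of non-intersecting lattice paths, a vertical step recorded as a $1$ and a diagonal step as a $0$; matching lengths shows that row $i$ (which has $i$ entries) records the $i$ steps of the path beginning at $(i,i)$, read from its top endpoint downward. Under this reading, ``weakly decreasing rows'' says precisely that in every path all vertical steps precede all diagonal steps, i.e.\ no row contains a consecutive pair $b_{i,j}=0$, $b_{i,j+1}=1$. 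Hence a TSSCPP fails to be a permutation TSSCPP exactly when some path has, in reading order, a diagonal step immediately followed by a vertical step, and the two characterizations to be proved are simply the images of this one forbidden ``diagonal-then-vertical'' turn under the path-to-array dictionary.

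Second I would make that dictionary explicit on the fundamental domain. In the bijection of Proposition~\ref{prop:nilp} the paths are drawn alongside the stacks, so each path traces, level by level, the boundary of a constant region of the array; under the deformation used there, a maximal run of equal values along a southeast diagonal $t_{i,j+1}=t_{i+1,j+2}=\cdots=t_{i+k,j+k+1}$ is recorded by a diagonal step while a strict drop is recorded by a vertical step. Carrying the turn through this dictionary, the diagonal step produces the diagonal plateau $t_{i,j+1}=t_{i+k,j+k+1}$ of some length $k\ge 0$; its being the \emph{head} of that plateau (rather than a continuation) forces the horizontal drop $t_{i,j}>t_{i,j+1}$ on the upper-left; and the vertical step immediately following forces the vertical drop $t_{i+k,j+k+1}>t_{i+k+1,j+k+1}$ below the tail. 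This is exactly the array pattern in the statement, with the indices confined to $n+1\le i\le j\le 2n-1$ because every such turn sits strictly inside the fundamental domain. Reversing each implication shows the array pattern occurs if and only if the ``$01$'' pattern does, which proves the first characterization.

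Finally, the cube characterization is just the geometric restatement of the same pattern: reading $t_{i,j}$, $t_{i,j+1}$, $t_{i+k,j+k+1}$, $t_{i+k+1,j+k+1}$ as stack heights and drawing the corresponding unit cubes yields precisely the pictured local configuration, so its presence is equivalent to the array pattern and hence to a violated row. The main obstacle is the dictionary of the middle paragraph: since the bijection of Proposition~\ref{prop:nilp} involves a quarter-turn and a reflection, a single turn of one path spreads across several rows and columns of the fundamental domain, and the length-$k$ plateau is exactly the bookkeeping device that absorbs this spread. The care required is to verify that the two strict inequalities land on the correct cells for every $k$ — the horizontal drop at the head of the plateau and the vertical drop at its tail — including the degenerate case $k=0$, where the plateau is a single cell and the pattern reads $t_{i,j}>t_{i,j+1}>t_{i+1,j+1}$.
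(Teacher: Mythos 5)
Your opening reduction contains an error that the rest of the argument inherits: row $i$ of the boolean triangle does \emph{not} record the steps of the single path starting at $(i,i)$. The ``matching lengths'' observation is not decisive, because both the rows and the northwest--southeast diagonals of the triangle have $1,2,\ldots,n-1$ entries, and in the encoding of Proposition~\ref{prop:boolbij} it is the \emph{diagonals} that are the paths: the defining inequality of Definition~\ref{def:bool} compares partial sums of adjacent diagonals, and the proof of Proposition~\ref{prop:boolbij} identifies exactly that inequality with non-intersection of adjacent paths. (Check this against the triangle with rows $(1)$ and $(0,1)$ excluded just before Figure~\ref{fig:booln3}: read along diagonals its two paths both end at $(2,0)$ and so intersect, whereas read along rows they would be disjoint and the triangle would be admissible, contradicting the text.) Consequently $b_{i,j}$ is a step of path $j$, and row $i$ collects one step from \emph{each} of the paths $n-i,\ldots,n-1$ --- namely the steps at a fixed level, which is what the paper means by rows corresponding to columns of steps in the fundamental domain. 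The forbidden pattern $b_{i,j}=0$, $b_{i,j+1}=1$ is therefore a relation between two \emph{different} paths at the same level, not a ``diagonal-then-vertical turn'' within one path, so your single-path criterion characterizes the wrong set. Concretely, at $n=3$ the third TSSCPP of Figure~\ref{fig:cl10} (fundamental domain $t_{4,4}=t_{4,5}=1$, $t_{5,5}=0$) has boolean triangle with rows $(0)$ and $(1,1)$, hence is a permutation TSSCPP (it corresponds to $213$), yet its path from $(2,2)$ is the diagonal $(b_{1,2},b_{2,2})=(0,1)$, a diagonal step followed by a vertical step; your criterion would exclude it and leave only five of the required $3!=6$ permutation TSSCPP.

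Because the middle paragraph translates the wrong local pattern, the derivation of the chain $t_{i,j}>t_{i,j+1}=t_{i+k,j+k+1}>t_{i+k+1,j+k+1}$ does not go through as written: the two strict drops in that chain are produced by two \emph{neighboring} paths (the horizontal drop at the head by one, the vertical drop at the tail by the other), which is precisely why a plateau of arbitrary length $k$ intervenes and why the configuration cannot be localized to two consecutive steps of a single path. To repair the argument you need to redo the dictionary with the correct identification (diagonals $=$ paths, rows $=$ levels $=$ columns of steps in the fundamental domain) and then track where a level-$n-i$ diagonal step of path $j$ and a level-$n-i$ vertical step of path $j+1$ land in the array; that is the content of the paper's own short proof.
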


\begin{proof}
In the bijection of Proposition~\ref{prop:boolbij} (via Proposition~\ref{prop:nilp}), the left-to-right rows of a TSSCPP boolean triangle correspond to the top-to-bottom columns of steps in the fundamental domain. The non-permutation TSSCPP boolean triangle configuration in which a zero is to the left of a one in a row corresponds directly with the configuration shown above.
\end{proof}

Lemma~\ref{lem:permconfig} shows that the non-permutation TSSCPP in Figures~\ref{fig:cl10} and \ref{ex:tikztsscpp} is the one in the upper right.

\medskip
We can also adapt this condition to characterize permutation magog triangles directly. Note that this characterization of permutations is not at all obvious from the perspective of magog triangles, but relies heavily on the connection to boolean triangles. 
\begin{theorem}
\label{thm:permmagog}
Permutation magog triangles are characterized by the following condition:
there is no entry $\alpha_{i,j}$ and $k\geq 0$ such that
\[\alpha_{i,j} \geq \alpha_{i+1,j} = \alpha_{i+k+1,j-k} > \alpha_{i+k+1,j-k-1}+1.\]
\end{theorem}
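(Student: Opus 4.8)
The plan is to push the characterization of permutation TSSCPP recorded in Lemma~\ref{lem:permconfig} forward through the magog bijection of Proposition~\ref{prop:magog}, whose defining formula is $\alpha_{i,n-j}=t_{n+j,n+i}+i-j+1$. Since that bijection is a relabeling of indices together with a diagonal-dependent additive shift, the single forbidden TSSCPP pattern should transform into a single forbidden magog pattern; the entire content of the theorem is then a bookkeeping verification that the stated magog inequalities are exactly the images of the TSSCPP inequalities $t_{i,j} > t_{i,j+1} = t_{i+k,j+k+1} > t_{i+k+1,j+k+1}$.

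First I would rewrite the fundamental-domain indices of Lemma~\ref{lem:permconfig} in the form the formula expects. Writing the forbidden-configuration corner as $i = n+b$, $j = n+a$ (so that $1 \le b \le a \le n-1$), the four entries $t_{i,j}$, $t_{i,j+1}$, $t_{i+k,j+k+1}$, $t_{i+k+1,j+k+1}$ correspond under the formula to $\alpha_{a,n-b}$, $\alpha_{a+1,n-b}$, $\alpha_{a+k+1,n-b-k}$, and $\alpha_{a+k+1,n-b-k-1}$, respectively. Solving the formula for each $t$-entry, I would record the additive constant attached to each: the two middle entries $t_{i,j+1}$ and $t_{i+k,j+k+1}$ receive the \emph{same} shift $a-b+2$, while the constant on $t_{i,j}$ is one less than that on $t_{i,j+1}$, and the constant on $t_{i+k+1,j+k+1}$ is one less than that on $t_{i+k,j+k+1}$.

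Substituting these into the three relations is then immediate: the equality $t_{i,j+1}=t_{i+k,j+k+1}$ passes through unchanged to $\alpha_{a+1,n-b}=\alpha_{a+k+1,n-b-k}$, since both sides carry the same constant; the strict inequality $t_{i,j}>t_{i,j+1}$ becomes $\alpha_{a,n-b}>\alpha_{a+1,n-b}-1$, which for integers is exactly $\alpha_{a,n-b}\ge\alpha_{a+1,n-b}$; and $t_{i+k,j+k+1}>t_{i+k+1,j+k+1}$ becomes $\alpha_{a+k+1,n-b-k}>\alpha_{a+k+1,n-b-k-1}+1$. Relabeling $(a,n-b)$ as $(i,j)$ reproduces precisely the claimed chain $\alpha_{i,j}\ge\alpha_{i+1,j}=\alpha_{i+k+1,j-k}>\alpha_{i+k+1,j-k-1}+1$. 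I would also check that the admissible range of $(a,b,k)$ maps bijectively onto the admissible range of magog indices, so that no spurious or missing instances of the pattern are introduced.

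The computation is routine; the only real care is in the bookkeeping of the two ``off-by-one'' shifts, since these are exactly what turns the first strict TSSCPP inequality into a weak magog inequality and what produces the $+1$ in the last magog inequality. The main obstacle, such as it is, is keeping the doubly-indexed substitutions and the direction of each shift consistent, together with confirming that the diagonal offsets $i-j+1$ in the formula interact correctly with the $+k$ shifts along the secondary diagonal; once the constants are tabulated, the equivalence is forced.
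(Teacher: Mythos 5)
Your proposal is correct and follows essentially the same route as the paper: both push the forbidden pattern of Lemma~\ref{lem:permconfig} through the formula $\alpha_{i,n-j}=t_{n+j,n+i}+i-j+1$ of Proposition~\ref{prop:magog} and track the diagonal-dependent additive shifts, which is exactly what converts the strict inequality $t_{i,j}>t_{i,j+1}$ into the weak inequality $\alpha_{i,j}\geq\alpha_{i+1,j}$ and produces the $+1$ in the last inequality. Your bookkeeping of the constants (both middle entries shifted by $a-b+2$, the outer two by $a-b+1$) checks out and reproduces the paper's computation up to the choice of intermediate index names.
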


\begin{proof}
From Proposition~\ref{prop:magog} we have that $\alpha_{i,n-j}=t_{n+j,n+i}+i-j+1$, so solving for $t_{n+j,n+i}$ and substituting $n+j\rightarrow i$ and $n+i\rightarrow j$, we have $t_{i,j}=\alpha_{j-n,2n-i}+i-j-1$. Thus the condition of Lemma~\ref{lem:permconfig} that there is no choice of integer $k\geq 0$ and fundamental domain indices $(i,j)$, $n+1\leq i\leq j\leq 2n-1$, 
such that \[t_{i,j} > t_{i,j+1} = t_{i+k,j+k+1} > t_{i+k+1,j+k+1}\] translates to 
\[\alpha_{j-n,2n-i}+i-j-1 > \alpha_{j+1-n,2n-i}+i-j-2\] 
\[= \alpha_{j+k+1-n,2n-i-k}+i-j-2 > \alpha_{j+k+1-n,2n-i-k-1}+i-j-1.\] 
Simplified, this is \[\alpha_{j-n,2n-i} > \alpha_{j+1-n,2n-i}-1\] 
\[= \alpha_{j+k+1-n,2n-i-k}-1 > \alpha_{j+k+1-n,2n-i-k-1}.\] 
Translating $j-n\rightarrow i$ and $2n-i\rightarrow j$ so that $j\rightarrow i+n$ and $i\rightarrow 2n-j$, we have
\[\alpha_{i,j} > \alpha_{i+1,j}-1 = \alpha_{i+k+1,j-k}-1 > \alpha_{i+k+1,j-k-1},\] which is equivalent to the desired condition.
\end{proof}

We will use this characterization of permutation magog triangles in Section~\ref{sec:poset} to define a new partial order on permutations via the natural partial order on magog triangles.

\subsection{The bijection}
\label{subsec:bij}
We are now ready to state and prove our main theorem.

\begin{theorem}
\label{thm:tsscppsn}
There is a natural, statistic-preserving bijection between $n\times n$ permutation matrices with inversion number $p$  whose one in the last row is in column $k$ and whose one in the last column is in row $\ell$ and permutation TSSCPP boolean triangles of order~$n$ with $p$ zeros, exactly $n-k$ of which are contained in the last row, and for which the lowest one in diagonal $n-1$ is in row $\ell-1$, or if $\ell=1$, then diagonal $n-1$ does not contain a one. 
\end{theorem}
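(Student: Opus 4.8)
The plan is to construct the bijection explicitly by composing the standard ASM-monotone triangle correspondence (restricted to permutations) with a direct map to permutation TSSCPP boolean triangles, and then to verify that each of the three claimed statistics—inversion number, the column $k$ of the one in the last row, and the row $\ell$ of the one in the last column—is carried to its asserted counterpart. Since a permutation TSSCPP boolean triangle has weakly decreasing rows (Definition~\ref{def:permbool}), such a triangle is completely determined by the single number $r_i \in \{0,1,\ldots,i\}$ of ones in row $i$, for $1 \le i \le n-1$. By Proposition~\ref{prop:nfact} there are exactly $\prod_{i=1}^{n-1}(i+1) = n!$ such triangles, matching the count of $n\times n$ permutation matrices, so the combinatorial skeleton of the bijection is simply a rule assigning to each permutation $\sigma$ a sequence $(r_1,\ldots,r_{n-1})$ of row-lengths.

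\textbf{First} I would pin down the map at the level of these row-lengths. The natural candidate is a Lehmer-code-type statistic: the number of ones in row $i$ (equivalently, $i$ minus the number of zeros in row $i$) should be read off from the partial inversion data of $\sigma$. Because the total number of zeros must equal the inversion number $p = I(\sigma)$, and each row $i$ contributes at most $i$ zeros, the zeros-per-row sequence should be exactly an inversion table of $\sigma$ (a sequence $c_i$ with $0 \le c_i \le i$ summing to $\mathrm{inv}(\sigma)$). So I would define the boolean triangle to have $c_i$ left-justified zeros in row $i$, where $(c_1,\ldots,c_{n-1})$ is the appropriate inversion table, and then confirm that the three statistics fall out. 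The zero-count statistic is immediate from $\sum c_i = p$. For the last-row and last-column conditions, I would identify which $c_i$ governs row $n-1$ of the triangle and which entries sit on diagonal $n-1$, then match ``$n-k$ zeros in the last row'' and ``lowest one in diagonal $n-1$ is in row $\ell-1$'' to the positions of the one in the last row (column $k$) and last column (row $\ell$) of the matrix. Here I would lean on the monotone-triangle description of these boundary positions, since in a permutation matrix the last row and last column each contain a single one whose location is encoded by the extreme entries of the corresponding monotone/magog triangle.

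\textbf{The main obstacle} I anticipate is verifying the two boundary statistics, rather than the inversion statistic. The inversion/zero-count correspondence is essentially the Lehmer code and should be clean. But the claims about the last row and last column require tracing the boundary of the TSSCPP boolean triangle back through the chain of bijections (boolean triangle $\leftrightarrow$ nest of non-intersecting lattice paths $\leftrightarrow$ magog triangle via Proposition~\ref{prop:boolbij}, Proposition~\ref{prop:nilp}, Proposition~\ref{prop:magog}) and comparing it against the corresponding boundary of the monotone triangle of $\sigma$. The delicate point is that the last column of the matrix corresponds to a particular diagonal (diagonal $n-1$) of the triangular array, while the last row corresponds to the bottom row; I would need to carefully track how the quarter-turn deformations and the row/diagonal reindexing in Theorem~\ref{thm:permmagog} reorient these boundaries, and confirm the ``$\ell-1$'' and ``$n-k$'' offsets precisely. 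I would handle this by checking the boundary behavior on small cases (orders $n=2,3$, using Figures~\ref{fig:mt3} and~\ref{fig:booln3}) to fix the indexing conventions, then giving a general argument.

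\textbf{Finally}, having exhibited a well-defined map sending permutations to permutation TSSCPP boolean triangles and having verified it respects all three statistics, I would argue bijectivity by invertibility: given the row-lengths $(r_1,\ldots,r_{n-1})$ one recovers the inversion table and hence $\sigma$ uniquely, so the map is a bijection between two sets of equal cardinality $n!$. This makes the ``natural, statistic-preserving'' claim complete.
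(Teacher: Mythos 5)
Your combinatorial skeleton is right, and with the correct choice of inversion table your map coincides with the paper's bijection: the paper's construction, restricted to permutation boolean triangles (which are determined by their row sums), does assign to row $i$ exactly $c_i=\#\{k\leq i:\sigma(k)>\sigma(i+1)\}$ zeros. But as written the proposal has a genuine gap precisely where you flag ``the main obstacle'': you never pin down \emph{which} inversion table to use, and you never actually verify the two boundary statistics, proposing instead to calibrate on small cases and then find ``a general argument.'' That argument is the content of the theorem. Moreover, the route you sketch for it --- tracing the boundary through the chain boolean triangle $\leftrightarrow$ lattice paths $\leftrightarrow$ magog triangle and the quarter-turn deformations --- is a detour; none of that is needed. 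The paper instead defines a local rule building the monotone triangle directly from the boolean triangle: $a_{n,j}=j+1$, and for $i<n$, $a_{i,j}=a_{i+1,j}$ if $b_{i,j}=0$ and $a_{i,j}=a_{i+1,j-1}$ if $b_{i,j}=1$. From this rule one checks in a few lines that the result is a monotone triangle with no $-1$ configurations, that each zero $b_{i,j}$ forces $a_{i,j}=a_{i+1,j}$ and hence contributes one inversion involving $\sigma(i+1)$, and that the two boundary statistics fall out by inspecting the bottom row and the last diagonal. This local rule (or an equivalent explicit identification of the table $c_i$) is the missing idea in your proposal.

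To close the gap along your own lines without the local rule: take $c_i=\#\{k\leq i:\sigma(k)>\sigma(i+1)\}$. Then $\sum_i c_i=\mathrm{inv}(\sigma)=p$ gives the zero count; $c_{n-1}=\#\{k\leq n-1:\sigma(k)>\sigma(n)\}=n-\sigma(n)=n-k$ gives the last-row statistic; and since the rows are weakly decreasing with left-justified ones, $b_{i,n-1}=1$ exactly when $c_i=0$, i.e.\ when $\sigma(i+1)$ is a left-to-right maximum, so the lowest one in diagonal $n-1$ sits in row $\ell-1$ where $\ell$ is the position of the value $n$, which is the row of the one in the last column. With these three verifications supplied, and bijectivity from the invertibility of the inversion table, your argument would be complete and equivalent to the paper's.
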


\begin{proof}
We first describe the bijection map. An example of this bijection is shown in Figure~\ref{fig:bijex}.

Begin with a permutation TSSCPP of order $n$. Consider its associated boolean triangle $b=\{b_{i,j}\}$ for $1\leq i\leq n-1$, $n-i\leq j\leq n-1$. Define $a=\{a_{i,j}\}$ for $1\leq i\leq n$, $n-i\leq j\leq n-1$ as follows:
$a_{n,j}=j+1$ and for $i<n$, $a_{i,j}=a_{i+1,j}$ if $b_{i,j}=0$ and $a_{i,j}=a_{i+1,j-1}$ if $b_{i,j}=1$.  
We claim $a$ is a monotone triangle. Clearly $a_{i,j-1}\leq a_{i-1,j}\leq a_{i,j}$. Also, $a_{i,j}< a_{i,j+1}$, since if $a_{i,j} = a_{i,j+1}$, then $a_{i,j} = a_{i+1,j}$  and $a_{i,j+1}=a_{i+1,j}$ so that we would need $b_{i,j}=0$ and $b_{i,j+1}=1$. This contradicts the fact that the rows of permutation boolean triangles must weakly decrease. Furthermore, $a$ is a monotone triangle with no negative ones in the corresponding ASM, since each entry is defined to be equal to one of its diagonal neighbors in the row below; see Corollary~\ref{cor:negativeone}. This process is clearly invertible. 

\begin{figure}[htbp]
\begin{center}
\scalebox{.9}{$\begin{array}{c}
\mbox{\large{TSSCPP}}\\
\includegraphics[scale=.5]{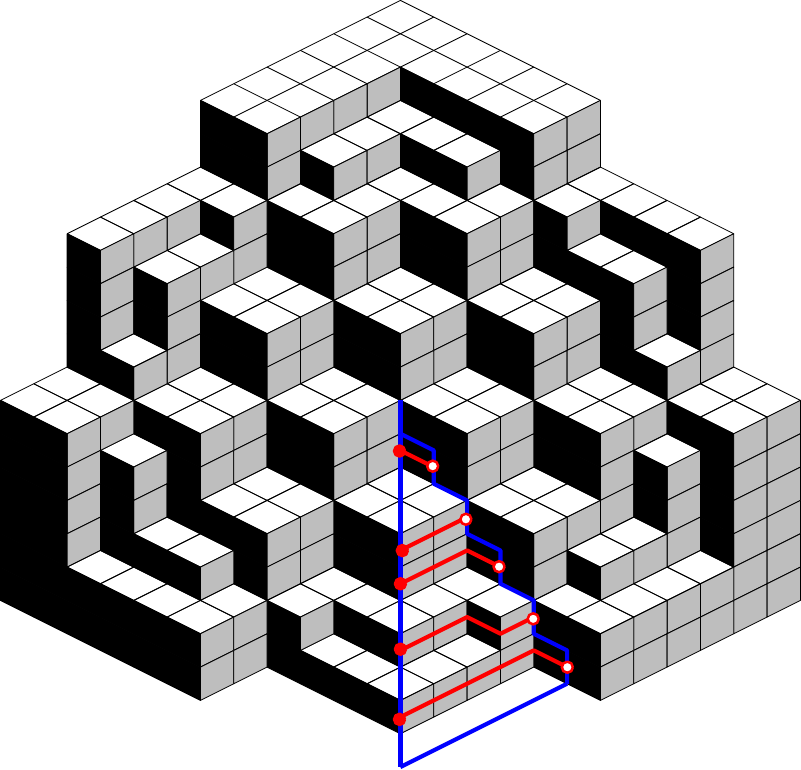}
\end{array}
\Leftrightarrow$}

\scalebox{.75}{
$\begin{array}{cc}
 \begin{array}{c}
\mbox{\LARGE{Non-intersecting}}\\
\mbox{\LARGE{lattice paths}}\\
\raisebox{\depth}{\scalebox{1}[-1]{
\scalebox{0.8}{
$\xymatrix @-1.0pc {
\circ\bullet & \bullet \ar@{-}[d] & \cdot & \cdot & \bullet \ar@{-}[dl]  & \bullet \ar@{-}[dl]  & \cdot & \bullet \ar@{-}[dl]  & \cdot   & \bullet \ar@{-}[dl] & \cdot \\
 & \circ & \cdot & \cdot \ar@{-}[dl] & \cdot \ar@{-}[dl]  & \cdot & \cdot \ar@{-}[dl]  & \cdot & \cdot \ar@{-}[dl]  & \cdot \\
 &  & \circ & \cdot \ar@{-}[d]  & \cdot & \cdot \ar@{-}[d]  & \cdot & \cdot \ar@{-}[dl]  & \cdot &  \\
 &  &  & \circ & \cdot & \cdot \ar@{-}[dl]  & \cdot \ar@{-}[dl]  & \cdot &  & \Large{\Leftrightarrow} \\
& & &  & \circ & \cdot \ar@{-}[d] & \cdot & &  &  \\
& & & &  & \circ 
}$}
}}
\end{array}
&
\begin{array}{c}
\mbox{\LARGE{Boolean triangle}}\\
\mbox{ }\\
\scalebox{.9}{$
\begin{array}{ccccccccc}
   & & & & 1 & & & &  \\
   & & & \Large{\textcolor{red}{\textbf{0}}} & & \Large{\textcolor{red}{\textbf{0}}} & & &  \Leftrightarrow \\
   & & 1 & & 1 & & \Large{\textcolor{red}{\textbf{0}}} & &  \\
   & \Large{\textcolor{red}{\textbf{0}}} & & \Large{\textcolor{red}{\textbf{0}}} & & \Large{\textcolor{red}{\textbf{0}}} & & \Large{\textcolor{red}{\textbf{0}}} &  \\
   1 & & \Large{\textcolor{red}{\textbf{0}}} & & \Large{\textcolor{red}{\textbf{0}}} & & \Large{\textcolor{red}{\textbf{0}}} & & \Large{\textcolor{red}{\textbf{0}}} \end{array}$} \end{array}
\\
  &\\
\begin{array}{c}
\mbox{\LARGE{Monotone triangle}}\\
\mbox{ }\\
\begin{array}{ccccccccccc}
  & & & & & 4 & & & & & \\
  & & & & \Large{\textcolor{red}{\textbf{4}}} & & \Large{\textcolor{red}{\textbf{6}}} & & & & \Leftrightarrow\\
  & & & 3 & & 4 & & \Large{\textcolor{red}{\textbf{6}}} & & & \\
  & & \Large{\textcolor{red}{\textbf{3}}} & & \Large{\textcolor{red}{\textbf{4}}} & & \Large{\textcolor{red}{\textbf{5}}} & & \Large{\textcolor{red}{\textbf{6}}} & & \\
  & 1 & & \Large{\textcolor{red}{\textbf{3}}} & & \Large{\textcolor{red}{\textbf{4}}} & & \Large{\textcolor{red}{\textbf{5}}} & & \Large{\textcolor{red}{\textbf{6}}} &\\
  1 & & 2 & & 3 & & 4 & & 5 & & 6\end{array} \end{array}
&
\begin{array}{c}
\mbox{\LARGE{Permutation matrix}}\\
\mbox{ }\\
\left( \begin{array}{cccccc}
0&0&0&1&0&0\\
0&0&0&0&0&1\\
0&0&1&0&0&0\\
0&0&0&0&1&0\\
1&0&0&0&0&0\\
0&1&0&0&0&0
\end{array}\right) \end{array}
\end{array}$}
\end{center}
\caption{An example of the bijection.  Note that the matrix on the right represents the permutation $463512$ which has $11$ inversions. These inversions correspond to the $11$ entries in the monotone triangle equal to their southeast diagonal neighbor (shown in bold), as well as the zeros in the boolean triangle and the diagonal steps of the nest of non-intersecting lattice paths.}
\label{fig:bijex}
\end{figure}

We now show that this map takes a permutation TSSCPP boolean triangle with $p$ zeros to a permutation matrix with $p$ inversions.
Note that to convert from the monotone triangle representation of a permutation to usual one-line notation $\sigma$ such that $i\rightarrow \sigma(i)$, we set $\sigma(i)$ equal to the unique new value in row $i$ of the monotone triangle. Thus for each entry of the monotone triangle $a_{i,j}$ such that $a_{i,j}=a_{i+1,j}$, there will be an inversion in the permutation between $a_{i,j}$ and $\sigma(i+1)$. This is because $a_{i,j}=\sigma(k)$ for some $k\leq i$ and $\sigma(k)=a_{i,j}>\sigma(i+1)$. These entries $a_{i,j}$ such that $a_{i,j}=a_{i+1,j}$ correspond exactly to zeros in row $i$ of the boolean triangle $b$. Thus if a permutation TSSCPP has $p$ zeros in its boolean triangle, its corresponding permutation will have $p$ inversions.

Also, observe that if the number of zeros in the last row of the boolean triangle is $k$, then the one in the bottom row of the permutation matrix will be in column $n-k$. So the missing number in row $n-1$ of the monotone triangle determines where the last row of the boolean triangle transitions from ones to zeros. So by the bijection between monotone triangles and ASM, the one in the last row of the resulting permutation matrix is in column $n-k$.

Finally, if the lowest one in diagonal $n-1$ of the boolean triangle is in row $\ell-1$, this means that the entries $\{a_{i,n-1}\}$ for $\ell\leq i\leq n$ are all equal to $n$. So the one in the last column of the permutation matrix is in row $\ell$.
\end{proof}

See Figure~\ref{fig:bijex} for an example of this bijection. 

%%%%%%%%%%%%%%%%%%%%%%%%%%%%%%%%%%%%%%%%%%%%%%%%%%%%%%%%%%%%
\subsection{Outlook on a general bijection}
%%%%%%%%%%%%%%%%%%%%%%%%%%%%%%%%%%%%%%%%%%%%%%%%%%%%%%%%%%%%
\label{sec:spec}

In~\cite{Striker_DPP}, we discussed the obstacles to turning the bijection between permutations and descending plane partitions presented there into a bijection between all ASM and DPP. Here we discuss some of the challenges to the ASM-TSSCPP bijection in full generality. 

While DPP have the property that the number of parts equals the inversion number of the ASM (this is now proved, though not bijectively~\cite{ZINNDPP}), TSSCPP do not have such a statistic as of yet. We showed that the number of 
zeros in a permutation TSSCPP boolean triangle gives the inversion number of the permutation matrix, but this is not true for general TSSCPP boolean triangles and ASM. This is because the distribution of the number of zeros over all TSSCPP boolean triangles of order $n$ does not correspond to the inversion number distribution on $n\times n$ ASM (using either the inversion number of Definition~\ref{def:inv} or the alternative definition discussed in the paragraph afterward).  Furthermore, while the number of special parts of a DPP corresponds to the number of negative ones in the ASM~\cite{ZINNDPP}, no such TSSCPP statistic is currently known. It would seem reasonable to conjecture that the negative ones of the ASM should correspond to all instances of 
a zero followed by a one as you go across a row of the boolean triangle. We have calculated in Sage~\cite{sage} that this is true up to $n=4$. For $n\geq 5$ this correspondence seems to hold in the special cases of one negative one and the maximum number of negative ones $\lfloor\frac{(n-1)^2}{4}\rfloor$, but for the number of negative ones between one and  $\lfloor\frac{(n-1)^2}{4}\rfloor$, these statistics diverge.

T.~Fonseca and P.~Zinn-Justin have shown (as a singly-refined specialization of the theorem in Section 3.3 of~\cite{FonsecaZinnJustin})
that the distribution of the number of diagonal steps in the bottom row of the TSSCPP non-intersecting lattice paths corresponds to the enumeration of ASM refined by the position of the unique one in the top row. This was first conjectured in terms of magog triangles by W.~Mills, D.~Robbins, and H.~Rumsey (as the $k=1$ case of Conjecture 2 in~\cite{MRR3}) and mentioned in terms of lattice paths by P.~Zinn-Justin and P.~Di Francesco in \cite{difran_zinn08}. In view of this correspondence, one might hope to begin a general bijection by determining the $(n-1)$st row of the monotone triangle from the bottom row of the TSSCPP boolean triangle by left-justifying all the vertical steps and then bijecting in the same way as in the permutation case. After that, though, it is unclear how to proceed. See Figure~\ref{fig:stat2} for a summary of the various statistics which are preserved in the permutation case DPP-ASM-TSSCPP bijections and which should be true in full generality since the distributions correspond. (See~\cite{Striker_DPP} for further explanation of the DPP case.)

\begin{figure}[htbp]
\begin{center}
\begin{tabular}{|c|c|c|}
\hline
DPP&ASM&TSSCPP\\
&& boolean triangle\\
\hline
no special parts* & no $-1$'s & rows weakly decrease\\
\hline
number of parts* & number of inversions & number of zeros\\
\hline
number of $n$'s* & position of 1  & position of lowest 1\\
& in last column & in last diagonal\\
\hline
largest part value  & position of 1  & number of zeros \\
 not appearing & in last row & in last row*\\
\hline
\end{tabular}
\end{center}
\caption{This table shows the statistics preserved by the permutation case bijections of this paper and~\cite{Striker_DPP}. There is a star by the DPP and TSSCPP statistics that have the same distribution as the ASM statistic in the general case. This is proved for DPP in~\cite{ZINNDPP} and  for TSSCPP in~\cite{FonsecaZinnJustin}.}
\label{fig:stat2}
\end{figure}

We now compare this work with other bijections between subsets of ASM and TSSCPP.
% due to P.~Biane and H.~Cheballah. 
In~\cite{Biane_Cheballah_1}, the authors give a bijection between \emph{monotone trapezoids} and \emph{magog trapezoids} of two diagonals. 
The term \emph{trapezoid} indicates the truncation of the monotone or magog triangle to a fixed number of diagonals. Their bijection is both more and less general than the one of this paper. It is more general in the sense that it includes configurations corresponding to the negative one in an ASM, whereas we consider only permutations. It is less general in that it uses only two diagonals of the triangle, where we are able to consider the full triangle corresponding to a permutation TSSCPP. More recently, J.~Bettinelli has found a simpler bijection in this case~\cite{Bettinelli}.
%Experimental evidence suggests the bijection of~\cite{Biane_Cheballah_1} and the bijection of this paper may coincide (up to slight deformation) in the case of permutation monotone triangles, truncated to two diagonals. 
Perhaps the combination of these various perspectives will provide insight on the full bijection.

\section{Poset structures and Catalan subposets}
\label{sec:poset}
In this last section, we compare various partial orders on ASM and TSSCPP, along with their permutation and Catalan subposets. In Section~\ref{subsec:AnTn}, we review distributive lattices $A_n$ of ASM and $T_n$ of TSSCPP, defined naturally on their corresponding monotone or magog triangles~\cite{ELKP_DOMINO1}~\cite{STRIKERPOSET}. We note that the permutation subposet of $A_n$ is known to be the strong Bruhat order~\cite{TREILLIS}. In Section~\ref{subsec:tamari}, we use the characterization of permutation TSSCPP in Theorem~\ref{thm:tsscppsn} to define and study the permutation subposet $T_n^{\rm Perm}$ of $T_n$. While $T_n^{\rm Perm}$ does not coincide with any of the well-studied permutation posets, we show in Theorems~\ref{thm:tamari} and \ref{thm:Catdistr} that $T_n^{\rm Perm}$ contains two different Catalan subposets: the Tamari lattice and the Catalan distributive lattice. In Section~\ref{subsec:TBool}, we define a new TSSCPP partial order $TBool_n$ by componentwise comparison of the TSSCPP boolean triangles. We show in Corollary~\ref{cor:TBool} that its permutation subposet $TBool_n^{\rm Perm}$ is an especially nice product of chains distributive lattice which sits between the weak and strong Bruhat orders. Finally, we show in Corollary~\ref{cor:tamcat2} that $TBool_n^{\rm Perm}$ contains the same Tamari and Catalan  subposets as $T_n^{\rm Perm}$.

\subsection{ASM and TSSCPP posets via monotone and magog triangles}
\label{subsec:AnTn}
We start by reviewing natural ASM and TSSCPP posets defined via their monotone and magog triangles. The ASM poset was first introduced in \cite{ELKP_DOMINO1}  and further studied in~\cite{TREILLIS} and \cite{STRIKERPOSET}. This partial order can be naturally and equivalently defined using any of the following objects in bijection with ASM: monotone triangles, corner sum matrices, or height functions. Here we give the definition using monotone triangles. See Figure~\ref{fig:A3T3}, left, for an example.

\begin{definition}
\label{def:An}
Let  $A_n$, the ASM poset of order $n$, be the partial order defined by componentwise comparison of the monotone triangles of order $n$.
\end{definition}

\begin{figure}[htbp]
\begin{center}
\scalebox{.75}{
\begin{tikzpicture}[>=latex,line join=bevel,]
\node (node_6) at (77.500000bp,21.500000bp) [draw,draw=none] {$\begin{array}{ccccc}
&& 1 &\\
&1 & & 2\\
1&&2&&3
\end{array}$};
  \node (node_5) at (121.500000bp,100.500000bp) [draw,draw=none] {$\begin{array}{ccccc}
&& 1 &\\
&1 & &3\\
1&&2&&3
\end{array}$};
  \node (node_4) at (34.500000bp,100.500000bp) [draw,draw=none] {$\begin{array}{ccccc}
&& 2 &\\
&1 & &2\\
1&&2&&3
\end{array}$};
  \node (node_3) at (77.500000bp,179.500000bp) [draw,draw=none] {$\begin{array}{ccccc}
&& \Large{\textcolor{blue}{\textbf{2}}} &\\
&\Large{\textcolor{blue}{\textbf{1}}} & & \Large{\textcolor{blue}{\textbf{3}}}\\
\Large{\textcolor{blue}{\textbf{1}}}&&\Large{\textcolor{blue}{\textbf{2}}}&&\Large{\textcolor{blue}{\textbf{3}}}
\end{array}$};
  \node (node_2) at (34.500000bp,258.500000bp) [draw,draw=none] {$\begin{array}{ccccc}
&& 3 &\\
&1 & &3\\
1&&2&&3
\end{array}$};
  \node (node_1) at (121.500000bp,258.500000bp) [draw,draw=none] {$\begin{array}{ccccc}
&& 2 &\\
&2 & &3\\
1&&2&&3
\end{array}$};
  \node (node_0) at (77.500000bp,337.500000bp) [draw,draw=none] {$\begin{array}{ccccc}
&& 3 &\\
&2 & &3\\
1&&2&&3
\end{array}$};
  \draw [black,->] (node_3) ..controls (94.026000bp,209.420000bp) and (99.571000bp,219.120000bp)  .. (node_1);
  \draw [black,->] (node_3) ..controls (61.398000bp,209.330000bp) and (56.043000bp,218.920000bp)  .. (node_2);
  \draw [black,->] (node_2) ..controls (50.602000bp,288.330000bp) and (55.957000bp,297.920000bp)  .. (node_0);
  \draw [black,->] (node_5) ..controls (104.970000bp,130.420000bp) and (99.429000bp,140.120000bp)  .. (node_3);
  \draw [black,->] (node_6) ..controls (94.026000bp,51.420000bp) and (99.571000bp,61.125000bp)  .. (node_5);
  \draw [black,->] (node_6) ..controls (61.398000bp,51.334000bp) and (56.043000bp,60.923000bp)  .. (node_4);
  \draw [black,->] (node_1) ..controls (104.970000bp,288.420000bp) and (99.429000bp,298.120000bp)  .. (node_0);
  \draw [black,->] (node_4) ..controls (50.602000bp,130.330000bp) and (55.957000bp,139.920000bp)  .. (node_3);
\end{tikzpicture}
\begin{tikzpicture}[>=latex,line join=bevel,]
\node (node_6) at (77.500000bp,21.500000bp) [draw,draw=none] {$\begin{array}{ccccc}
&& 1 &\\
&1 & & 2\\
1&&2&&3
\end{array}$};
  \node (node_4) at (34.500000bp,100.500000bp) [draw,draw=none] {$\begin{array}{ccccc}
&& 2 &\\
&1 & &2\\
1&&2&&3
\end{array}$};
  \node (node_3) at (77.500000bp,179.500000bp) [draw,draw=none] {$\begin{array}{ccccc}
&& 2 &\\
&1 & & 3\\
1&&2&&3
\end{array}$};
  \node (node_8) at (-9.000000bp,179.500000bp) [draw,draw=none] {$\begin{array}{ccccc}
&& 3 &\\
&1 & & 2\\
1&&2&&3
\end{array}$};
  \node (node_2) at (34.500000bp,258.500000bp) [draw,draw=none] {$\begin{array}{ccccc}
&& \Large{\textcolor{blue}{\textbf{3}}} &\\
&\Large{\textcolor{blue}{\textbf{1}}} & &\Large{\textcolor{blue}{\textbf{3}}}\\
\Large{\textcolor{blue}{\textbf{1}}}&&\Large{\textcolor{blue}{\textbf{2}}}&&\Large{\textcolor{blue}{\textbf{3}}}
\end{array}$};
  \node (node_1) at (121.500000bp,258.500000bp) [draw,draw=none] {$\begin{array}{ccccc}
&& 2 &\\
&2 & &3\\
1&&2&&3
\end{array}$};
  \node (node_0) at (77.500000bp,337.500000bp) [draw,draw=none] {$\begin{array}{ccccc}
&& 3 &\\
&2 & &3\\
1&&2&&3
\end{array}$};
  \draw [black,->] (node_3) ..controls (94.026000bp,209.420000bp) and (99.571000bp,219.120000bp)  .. (node_1);
  \draw [black,->] (node_3) ..controls (61.398000bp,209.330000bp) and (56.043000bp,218.920000bp)  .. (node_2);
  \draw [black,->] (node_2) ..controls (50.602000bp,288.330000bp) and (55.957000bp,297.920000bp)  .. (node_0);
  \draw [black,->] (node_6) ..controls (61.398000bp,51.334000bp) and (56.043000bp,60.923000bp)  .. (node_4);
  \draw [black,->] (node_1) ..controls (104.970000bp,288.420000bp) and (99.429000bp,298.120000bp)  .. (node_0);
  \draw [black,->] (node_4) ..controls (50.602000bp,130.330000bp) and (55.957000bp,139.920000bp)  .. (node_3);
    \draw [black,->] (node_4) ..controls (19.00000bp,130.330000bp) and (12.043000bp,139.920000bp)  .. (node_8);
      \draw [black,->] (node_8) ..controls (5.043000bp,209.420000bp) and (12.571000bp,219.120000bp)  .. (node_2);
\end{tikzpicture}
}
\end{center}
\caption{Left: The poset $A_3$ of  monotone triangles of order $3$, partially ordered by componentwise comparison; Right: The poset $T_3$ of magog triangles of order $3$, partially ordered by componentwise comparison. The highlighted element in each poset is the non-permutation.}
\label{fig:A3T3}
\end{figure}
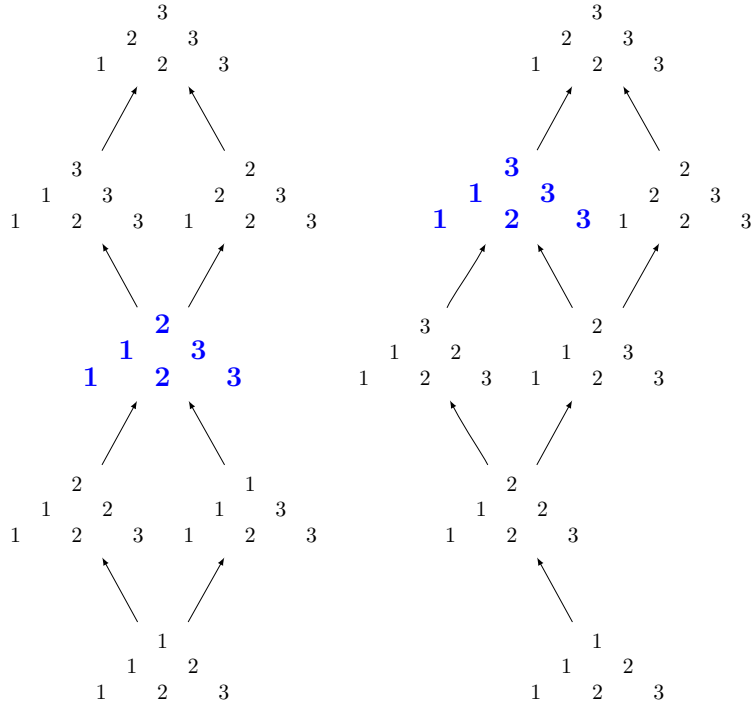

$A_n$ is a \emph{lattice}, which means any pair of elements has a unique \emph{meet} (greatest lower bound) and \emph{join} (least upper bound); $A_n$ is, furthermore, a \emph{distributive lattice}, meaning the operations of meet and join \emph{distribute} over each other. The \emph{fundamental theorem of finite distributive lattices} says that $A_n$ is, thus, isomorphic to the \emph{lattice of order ideals} $J(P_n)$ for some \emph{poset of join irreducibles} $P_n$. (An \emph{order ideal} is a subset $X$ of poset elements such that if $p\in X$ and $q\leq p$ then $q\in X$. See Chapter 3 of~\cite{Stanley_1} for poset terminology.) This join-irreducible poset $P_n$ has a nice structure, which we describe in the following theorem in a different, but equivalent, way to~\cite{ELKP_DOMINO1} and \cite{STRIKERPOSET}; see Figure~\ref{fig:A3T3}, left. See Definition 8.4 of~\cite{prorow} for an equivalent construction of $P_n$ as a layering of successively smaller \emph{Type A positive root posets}; see also the constructions in~\cite{ELKP_DOMINO1}, \cite{ProppManyFaces},  \cite{STRIKERPOSET}, and \cite{razstrogrow}.

\begin{theorem}[\cite{ELKP_DOMINO1}, \cite{STRIKERPOSET}]
\label{thm:An}
$A_n$ is isomorphic to the distributive lattice of order ideals of the poset $P_n$ described below.
Let the elements of $P_n$ be the coordinates $(i,j,k)$ in $\mathbb{Z}^3$ such that $0\leq i\leq n-2$, $0\leq j\leq n-2-i$, and $0\leq k\leq n-2-i-j$. Let the covering relations of $P_n$ be given as: $(i,j,k)$ covers $(i,j+1,k)$, $(i,j+1,k-1)$, $(i+1,j,k)$, and $(i+1,j,k-1)$, whenever these coordinates are poset elements. 
\end{theorem}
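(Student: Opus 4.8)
The plan is to invoke the \emph{fundamental theorem of finite distributive lattices}. Since $A_n$ is already known to be a finite distributive lattice, it is isomorphic to $J(\mathrm{Irr}_{\vee}(A_n))$, where $\mathrm{Irr}_{\vee}(A_n)$ denotes the induced subposet of join-irreducible elements (those covering a unique element). The entire content of the theorem is therefore to identify $\mathrm{Irr}_{\vee}(A_n)$ with the explicit tetrahedral poset $P_n$, so the proof reduces to three tasks: (i) describing the covering relations of $A_n$, (ii) locating and coordinatizing the join-irreducibles, and (iii) verifying the order induced on them.

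First I would pin down the covering relations. Writing $a^{\min}$ for the componentwise-smallest monotone triangle, obtained by propagating $a^{\min}_{i,j}=a^{\min}_{i+1,j-1}$ up from the fixed bottom row, and $e_{i,j}=a_{i,j}-a^{\min}_{i,j}\ge 0$ for the excess, the function $\phi(a)=\sum_{i,j}e_{i,j}$ is strictly order-preserving along every relation of $A_n$. Exhibiting one saturated chain from $\hat 0$ to $\hat 1$ built from single $+1$ moves, together with the computation $\phi(\hat 1)-\phi(\hat 0)=\binom{n+1}{3}$, identifies $\phi$ with the rank function; consequently every cover raises exactly one entry by exactly one, and the maximal rank equals $\binom{n+1}{3}=|P_n|$, a first consistency check.

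Next, the join-irreducibles are exactly the monotone triangles possessing a unique decreasable entry. I would show that each such triangle is the minimal element of $A_n$ forced once one selects an interior cell of the triangle together with a target height, and record this data as a triple $(i,j,k)$, where $(i,j)$ encodes the cell (with $i+j+1$ the row) and $k$ the height, subject to $i,j,k\ge 0$ and $i+j+k\le n-2$. The count $\sum_{s=0}^{n-2}(s+1)(n-1-s)=\binom{n+1}{3}$ confirms that this is a bijection onto the lattice points of the simplex. Finally, comparing the corresponding monotone triangles componentwise produces the covering relations among the join-irreducibles, and one checks they are precisely the four stated.

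The main obstacle is this last verification, and in particular the asymmetric appearance of $k$ versus $k-1$ among the four covers. This asymmetry is forced by the difference between the two weak diagonal inequalities $a_{i+1,j-1}\le a_{i,j}\le a_{i+1,j}$ and the single strict horizontal inequality $a_{i,j}<a_{i,j+1}$ in the definition of a monotone triangle; translating each into the excess variables $e_{i,j}$ and tracking which relation contributes the extra unit of slack is where the bookkeeping is most delicate, and care with the index conventions is essential. As a conceptual check on the shape of the answer, the tetrahedral region is exactly the poset of addable unit cubes in the height-function model of Elkies--Kuperberg--Larsen--Propp, whose order ideals are the ASM height functions; matching that model against the monotone-triangle excesses gives an independent derivation of the same $P_n$.
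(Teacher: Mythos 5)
The paper does not actually prove this statement: it is quoted from \cite{ELKP_DOMINO1} and \cite{STRIKERPOSET} with no argument supplied, so there is no in-paper proof to compare against and your attempt must stand on its own. Your overall plan --- show every cover in $A_n$ increments a single entry by one via the excess statistic $\phi$, identify the join-irreducibles as the minimal triangles forcing a prescribed excess at a prescribed cell, count them, and read off the induced order --- is the standard and correct route, and your two numerical checks ($\phi(\hat 1)=\binom{n+1}{3}=|P_n|$) are right.

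The genuine gap is that the entire content of the theorem is the explicit description of the order on the join-irreducibles, and that is precisely the step you do not carry out. You write that ``comparing the corresponding monotone triangles componentwise produces the covering relations among the join-irreducibles, and one checks they are precisely the four stated,'' and then devote your last paragraph to explaining why this check is delicate --- but you never perform it. Concretely, you must exhibit the minimal monotone triangle with excess at least $k$ at the cell indexed by $(i,j)$, show it dominates the minimal triangles attached to $(i,j+1,k)$, $(i,j+1,k-1)$, $(i+1,j,k)$, $(i+1,j,k-1)$, verify these are covers, and rule out any further covers; this is exactly where the weak diagonal inequalities $a_{i+1,j-1}\le a_{i,j}\le a_{i+1,j}$ versus the strict horizontal inequality $a_{i,j}<a_{i,j+1}$ enter, and without it the theorem is asserted rather than proved. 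Two smaller points: (a) you take as given that $A_n$ is a distributive lattice, but that is part of the assertion --- you should at least note that componentwise $\max$ and $\min$ of monotone triangles are again monotone triangles, the only nontrivial check being that strictness along rows survives; (b) your inference from ``one saturated chain of length $\binom{n+1}{3}$'' to ``every cover raises $\phi$ by exactly one'' uses the gradedness of finite distributive lattices (all maximal chains have equal length), which again rests on (a), so the logical order of your steps needs to be made explicit.
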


\begin{figure}[htbp]
\begin{center}
\includegraphics[scale=.45]{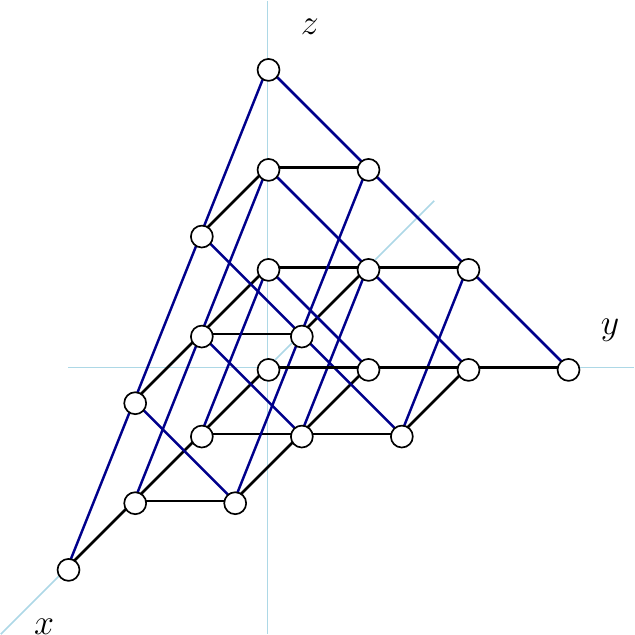}
\includegraphics[scale=.45]{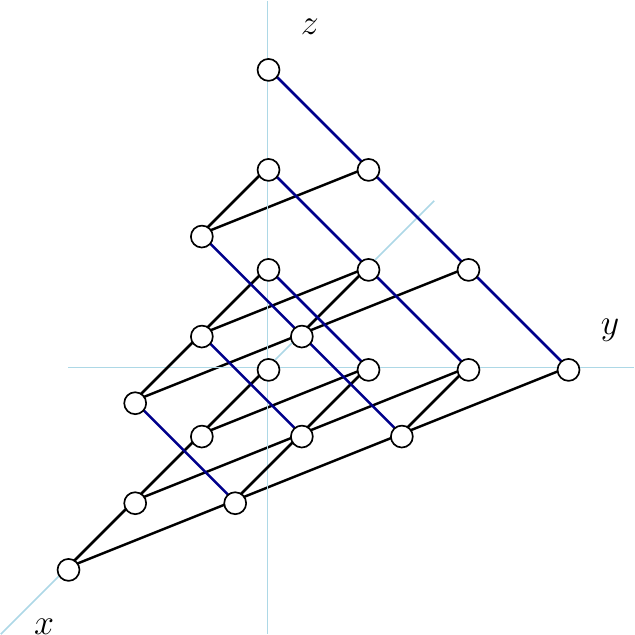}
\end{center}
\caption{Left: The poset $P_5$ such that $A_5\cong J(P_5)$; Right: The poset $Q_5$ such that $T_5\cong J(Q_5)$}
\label{fig:P5A5}
\end{figure}

The permutation subposet of $A_n$ is known to be a well-studied partial order on the symmetric group; we state here a theorem of A.~Lascoux and M.-P.\ Sch{\"u}tzenberger~\cite{TREILLIS} about the permutation subposet of $A_n$ (see Definition~\ref{def:bruhat} for the definition of Bruhat order).

\begin{theorem}[\cite{TREILLIS}]
\label{thm:MacNeille}
The induced subposet of $A_n$ on permutations of $n$, which we denote as $A_n^{\rm Perm}$, is isomorphic to the strong Bruhat order. Moreover, $A_n$ is the smallest lattice to contain the strong Bruhat order as a subposet, that is, $A_n$ is the MacNeille completion of the strong Bruhat order.
\end{theorem}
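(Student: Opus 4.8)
The plan is to prove the two assertions separately: first, that componentwise comparison of monotone triangles restricts on permutation matrices to the strong Bruhat order, and second, that the permutations are both join-dense and meet-dense in $A_n$, which is exactly what characterizes $A_n$ as the Dedekind--MacNeille completion.

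For the first assertion, I would begin by making the monotone triangle of a permutation $\sigma$ explicit: by the monotone triangle--ASM correspondence of Section~\ref{subsec:asmfam}, row $i$ of the monotone triangle of $\sigma$ is the increasing rearrangement of the prefix $\{\sigma(1),\dots,\sigma(i)\}$. Thus componentwise comparison $\sigma\le\tau$ in $A_n^{\rm Perm}$ says precisely that for every $i$ and every $k\le i$, the $k$-th smallest element of $\{\sigma(1),\dots,\sigma(i)\}$ is at most the $k$-th smallest element of $\{\tau(1),\dots,\tau(i)\}$. This is exactly the hypothesis of the classical tableau criterion for Bruhat order (Ehresmann's criterion; see Bj\"orner and Brenti, \emph{Combinatorics of Coxeter Groups}). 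I would either invoke that criterion or sketch it: the easy direction checks that a length-increasing transposition $\sigma\mapsto\sigma t_{ab}$ only raises sorted prefixes, while the harder direction shows that whenever $\sigma<\tau$ componentwise one can find a Bruhat cover $\sigma\lessdot\sigma'$ with $\sigma'\le\tau$ still componentwise, by locating the topmost--leftmost strict inequality and producing the corresponding transposition; induction then yields $\sigma\le\tau$ in Bruhat order.

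For the second assertion I would first record the lattice operations. Given two monotone triangles, their entrywise maximum again satisfies the defining inequalities of Definition~\ref{def:mt}: strictness along rows survives because $\max(a_{i,j},a'_{i,j})\le\max(a_{i,j+1},a'_{i,j+1})-1$, and the diagonal betweenness $a_{i+1,j-1}\le a_{i,j}\le a_{i+1,j}$ survives since $\max$ is monotone; the same holds for the entrywise minimum. Hence the monotone triangles form a sublattice of the componentwise-ordered integer lattice, so join and meet in $A_n$ (Definition~\ref{def:An}) are genuinely entrywise $\max$ and $\min$, and $A_n$ is a finite, hence complete, distributive lattice. By the standard characterization of the Dedekind--MacNeille completion (a finite lattice $L$ containing a poset $P$ as an induced subposet is its MacNeille completion iff $P$ is join-dense and meet-dense in $L$), it then suffices to show every monotone triangle is the join of the permutations below it and the meet of the permutations above it.

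The core step, which I expect to be the main obstacle, is join-density. Fix a monotone triangle $M=\{a_{i,j}\}$ and a position $(i_0,j_0)$ with value $v=a_{i_0,j_0}$, say the $r$-th entry of row $i_0$. I would construct a permutation $\sigma$ whose monotone triangle is $\le M$ at every position and equals $v$ at $(i_0,j_0)$: choose the prefix $\{\sigma(1),\dots,\sigma(i_0)\}$ greedily to be as small as possible subject to containing exactly $r-1$ values below $v$ together with $v$ itself, then extend to a full permutation keeping each sorted prefix as small as the constraints from $M$ permit. The delicate point is verifying $\sigma\le M$ simultaneously at all positions, both above and below row $i_0$; this uses the monotone triangle inequalities of $M$ to ensure the greedy choices never exceed the corresponding entries. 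Granting this, at each position $(i,j)$ the entrywise maximum over the constructed permutations is at most $a_{i,j}$ (each permutation being $\le M$) and at least the $(i,j)$-entry of the permutation built for $(i,j)$, namely $a_{i,j}$; so the join of these permutations is exactly $M$. Finally, meet-density follows by symmetry: reversing the columns of the ASM, equivalently $\sigma\mapsto w_0\sigma$ on permutation matrices, is an order-reversing involution of $A_n$ that fixes the set of permutations and sends the bottom element $e$ to the top element $w_0$, so it interchanges join-density and meet-density. With both densities in hand and the Bruhat embedding from the first part, $A_n$ is the Dedekind--MacNeille completion of the strong Bruhat order, and in particular the smallest lattice containing it as a subposet.
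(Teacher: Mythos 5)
First, a point of reference: the paper does not prove this theorem at all --- it is quoted from Lascoux and Sch\"utzenberger~\cite{TREILLIS} --- so there is no in-paper argument to compare yours against. Judged on its own, your architecture is the standard one and most of it checks out. The identification of row $i$ of the monotone triangle of a permutation with the sorted prefix $\{\sigma(1),\dots,\sigma(i)\}$ is exactly right, and componentwise order on these is Ehresmann's tableau criterion, so the first assertion is fine. The sublattice computation (entrywise $\max$ and $\min$ of monotone triangles are monotone triangles) is correct, the reduction of the MacNeille statement to join- and meet-density is the right characterization, and the deduction of meet-density from join-density via the order-reversing involution $\sigma\mapsto w_0\sigma$ (column reversal of the ASM, which complements the monotone triangle) is sound.

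The one genuine gap is the step you yourself flag and then pass over with ``Granting this'': the verification that your greedily constructed $\sigma$ satisfies $\sigma\le M$ at \emph{every} position, which is the entire content of join-density. As written this is an assertion, not a proof. It can be closed, and I would encourage you to do it with the following counting reformulation rather than by chasing sorted entries: for sets $S,T$ of size $i$, one has $\mathrm{sorted}(S)\le\mathrm{sorted}(T)$ componentwise iff $|S\cap[\ell]|\ge|T\cap[\ell]|$ for all $\ell$. Writing $T_i$ for the set underlying row $i$ of $M$, the monotone triangle inequalities translate into $|T_i\cap[\ell]|\le|T_{i+1}\cap[\ell]|\le|T_i\cap[\ell]|+1$. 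Your choice $S_{i_0}=\{1,\dots,r-1\}\cup\{v,v+1,\dots,v+i_0-r\}$ satisfies the constraint in row $i_0$ and attains $v$ in position $r$; for $i<i_0$ take the $i$ smallest elements of $S_{i_0}$ (this works because the $k$-th entry of row $i$ weakly increases as $i$ decreases); for $i>i_0$ adjoin at each step the smallest element of $[n]\setminus S_i$. The only way this greedy step could violate $|S_{i+1}\cap[\ell]|\ge|T_{i+1}\cap[\ell]|$ is if $|S_i\cap[\ell]|=|T_i\cap[\ell]|$, $|T_{i+1}\cap[\ell]|=|T_i\cap[\ell]|+1$, and the new element exceeds $\ell$; but the last condition forces $[\ell]\subseteq S_i$, hence $|T_i\cap[\ell]|=\ell$ and $|T_{i+1}\cap[\ell]|=\ell+1$, which is absurd. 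With that lemma in place your proof is complete.
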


We now recall a natural partial order on TSSCPP. See Figure~\ref{fig:A3T3}, right, for an example.
\begin{definition}
\label{def:Tn}
Let $T_n$, the TSSCPP poset of order $n$, be the partial order defined by componentwise comparison of the magog triangles of order $n$.
\end{definition}

Like $A_n$, $T_n$ is also a distributive lattice. In \cite{STRIKERPOSET}, we showed that $A_n$ and $T_n$ have similar posets of join irreducibles. We also described these posets using colored vectors in $\mathbb{R}^3$ for the various different types of covering relations; we furthermore defined and studied the \emph{tetrahedral poset family} of which these ASM and TSSCPP posets are members. Instead of delving into that theory here, in Theorem~\ref{thm:An} above and Theorem~\ref{thm:Tn} below we describe these posets in a different, but equivalent, way; see Figure~\ref{fig:P5A5}.
\begin{theorem}[\cite{STRIKERPOSET}]
\label{thm:Tn}
$T_n$ is  isomorphic to the distributive lattice of order ideals of the poset $Q_n$ described below.
Let the elements of $Q_n$ be the coordinates $(i,j,k)$ in $\mathbb{Z}^3$ such that $0\leq i\leq n-2$, $0\leq j\leq n-2-i$, and $0\leq k\leq n-2-i-j$. Let the covering relations of $Q_n$ be given as: $(i,j,k)$ covers $(i+1,j-1,k)$, $(i,j+1,k-1)$, and $(i+1,j,k)$, whenever these coordinates are poset elements. 
\end{theorem}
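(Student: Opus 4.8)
The plan is to prove Theorem~\ref{thm:Tn} by Birkhoff duality (the fundamental theorem of finite distributive lattices): once $T_n$ is known to be a distributive lattice, it is isomorphic to $J(\mathcal{Q})$ for $\mathcal{Q}$ its poset of join-irreducibles, and the entire task reduces to identifying $\mathcal{Q}$ with the coordinate poset $Q_n$.

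First I would record why $T_n$ is a distributive lattice (this is the content cited to \cite{STRIKERPOSET}, but the mechanism is what I will reuse). Every defining inequality of a magog triangle in Definition~\ref{def:magog} --- namely $\alpha_{i+1,j-1}\le\alpha_{i,j}$, $\alpha_{i+1,j}\le\alpha_{i,j}+1$, and $\alpha_{i,j}\le\alpha_{i,j+1}-1$ --- is a difference constraint $x_a-x_b\le c$, and the bottom row is pinned to constants. The solution set of any such system is closed under componentwise minimum and maximum: if $\alpha,\beta$ both satisfy $x_a-x_b\le c$ and $\gamma=\max(\alpha,\beta)$ with, say, $\gamma_a=\alpha_a$, then $\gamma_a-\gamma_b\le\alpha_a-\alpha_b\le c$ since $\gamma_b\ge\alpha_b$, and dually for the min. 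Hence the order-$n$ magog triangles form a sublattice of the product of chains $\mathbb{Z}^{\binom{n}{2}}$, so $T_n$ is a finite distributive lattice, and the number of its join-irreducibles equals its height $\binom{n+1}{3}=|Q_n|$.

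Next I would pin down the join-irreducibles and coordinatize them. For each free cell $c$ and each integer $v$ with $\min_c<v\le\max_c$ (extrema taken over $T_n$), let $J_{c,v}$ be the componentwise-least magog triangle with $\alpha_c\ge v$; this exists by min-closure, and the distinct $J_{c,v}$ are exactly the join-irreducibles. I would assign coordinates by letting $(j,k)$ index the cell $c$ (the apex cell being $(j,k)=(0,0)$), whose range satisfies $\max_c-\min_c=n-1-j-k$, and letting $i=\max_c-v$ record the depth of the threshold below the cell's maximum; then $0\le i\le n-2-j-k$, which is precisely the inequality $0\le k\le n-2-i-j$ defining the elements of $Q_n$. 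A short computation in the $n=3$ case fixes this bijection: writing an order-$3$ magog triangle as $(\alpha_{1,2};\alpha_{2,1},\alpha_{2,2})$ with bottom row $1\,2\,3$, the four join-irreducibles $(2;1,2),(3;1,2),(2;1,3),(2;2,3)$ map to $(1,0,0),(0,0,0),(0,1,0),(0,0,1)$, matching the Hasse diagram of $T_3$ in Figure~\ref{fig:A3T3}, center.

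Finally --- and here lies the real work --- I would verify that the induced lattice order on the $J_{c,v}$ yields exactly the three stated covers. Computing the lower covers of $J_{c,v}$ is a local analysis of forcing: pushing $\alpha_c$ up to $v$ forces certain neighboring entries up in the minimal triangle, and the maximal join-irreducibles strictly below $J_{c,v}$ arise either by lowering the threshold at $c$ by one (giving the cover $(i+1,j,k)$) or by passing to an adjacent cell whose forced value drops by one (giving $(i+1,j-1,k)$ and $(i,j+1,k-1)$). The main obstacle is keeping this propagation bookkeeping correct across the whole triangle and confirming that no further covers appear. I would manage it exactly as for $A_n$ in Theorem~\ref{thm:An}, by comparing against the colored-vector/tetrahedral-poset description of \cite{STRIKERPOSET} and checking that the coordinate change $(c,v)\mapsto(i,j,k)$ transports its covering vectors onto the three listed here; obtaining three covering vectors rather than the four of $P_n$ is precisely what distinguishes $Q_n$ from $P_n$.
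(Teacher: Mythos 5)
First, a point of reference: the paper contains no proof of Theorem~\ref{thm:Tn} at all --- the statement is imported from \cite{STRIKERPOSET}, where it is established via the tetrahedral-poset/colored-covering-vector machinery. So there is no internal argument to compare yours against line by line; your proposal has to stand on its own as a proof.

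On its own terms, the outline is the right one and its early stages are sound: the min/max-closure argument for difference constraints does exhibit $T_n$ as a distributive sublattice of a product of chains; the elements $J_{c,v}$ join-generate, so every join-irreducible is among them; and your coordinatization is consistent with Figure~\ref{fig:A3T3} for $n=3$. (Two small items you assert rather than prove: that the height of $T_n$ is exactly $\binom{n+1}{3}$, which needs a maximal chain incrementing one entry at a time, and that this count forces all nontrivial $J_{c,v}$ to be distinct and join-irreducible.) The genuine gap is the final step, which is where the whole content of the theorem lives: you never compute the induced order on the $J_{c,v}$ or verify that its covers are exactly the three listed, and instead propose to ``compare against the colored-vector/tetrahedral-poset description of \cite{STRIKERPOSET}.'' That is the very source the theorem cites, so as written the argument is circular. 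To close it self-containedly you must make the forcing explicit: the constraints of Definition~\ref{def:magog} propagate as $\alpha_{i,j}\ge v \Rightarrow \alpha_{i-1,j+1}\ge v,\ \alpha_{i-1,j}\ge v-1,\ \alpha_{i,j+1}\ge v+1$, which yields a closed form for $J_{c,v}$; one then reads off that $J_{c,v}\ge J_{c',v'}$ iff the forced value of $J_{c,v}$ at $c'$ is at least $v'$, and a finite local check identifies the maximal such $(c',v')$ as precisely the three neighbors claimed (and rules out the fourth cover that appears for $P_n$ in Theorem~\ref{thm:An}). Without that computation the proposal is an accurate plan, not a proof.
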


In the next subsection, we study a new partial order on permutations, namely, the induced subposet of $T_n$ on permutation TSSCPP, as a consequence of the identification of permutation TSSCPP in Theorem~\ref{thm:tsscppsn}. 

\subsection{The magog permutation poset and its Catalan subposets}
\label{subsec:tamari}
Using the characterization of permutation magog triangles in Theorem~\ref{thm:permmagog}, we introduce a new partial order on permutations as the restriction of $T_n$ to permutation magog triangles. See Figure~\ref{fig:A3T3}, right, with the non-permutation removed and Figure~\ref{fig:Tpermmagog34}.

\begin{definition}
\label{def:permmagog}
Let the \emph{magog permutation poset} $T_n^{\rm Perm}$ on permutations of $n$ be given by componentwise comparison of the corresponding magog triangles. That is, $T_n^{\rm Perm}$ is the induced subposet of $T_n$ consisting of the \emph{permutation} magog triangles.
\end{definition}

Though $T_n^{\rm Perm}$ does not correspond to any of the usual permutation posets (see Figure~\ref{fig:Tpermmagog34}), we show in Theorems~\ref{thm:tamari} and \ref{thm:Catdistr} that $T_n^{\rm Perm}$ contains two different Catalan posets as induced subposets: the Tamari lattice on $132$-avoiding permutations and the Catalan distributive lattice on $213$-avoiding permutations. 
Before stating and proving these theorems on subposets of $T_n^{\rm Perm}$, we note in the following proposition a structural property of $T_n^{\rm Perm}$. 

\begin{proposition}
\label{prop:notlatticemagog}
$T_n^{\rm Perm}$ is a lattice for $n\leq 3$, but for $n\geq 4$ it is not a lattice.
\end{proposition}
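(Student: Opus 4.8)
The plan is to prove the two claims separately, using the order-ideal description of $T_n$ from Theorem~\ref{thm:Tn} together with the magog characterization of permutations from Theorem~\ref{thm:permmagog}. For the small cases $n \le 3$, I would simply exhibit the posets directly: $T_2^{\rm Perm}$ has two elements (a chain), and $T_3^{\rm Perm}$ is obtained from the center poset $T_3$ of Figure~\ref{fig:A3T3} by deleting the single non-permutation element (the highlighted one). Removing that element from the displayed Hasse diagram leaves a poset that is easily checked to be a lattice by inspecting all pairs for unique meets and joins; since there are only six elements this is a finite verification.

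\begin{proof}
For $n\leq 3$ the claim is verified directly. The poset $T_2^{\rm Perm}$ is a two-element chain, and $T_3^{\rm Perm}$ is the six-element poset obtained from $T_3$ (Figure~\ref{fig:A3T3}, center) by deleting the unique non-permutation magog triangle; a direct check of all pairs shows every pair has a unique meet and join, so $T_3^{\rm Perm}$ is a lattice.

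For $n\geq 4$, it suffices to exhibit two permutation magog triangles whose set of common lower bounds (within $T_n^{\rm Perm}$) has no greatest element, so that no meet exists. We exhibit such a pair for $n=4$, then show the same failure persists for all larger $n$. Recall that $T_n$ is the distributive lattice $J(Q_n)$ of Theorem~\ref{thm:Tn}, so meets and joins exist in the ambient poset $T_n$ (given by componentwise $\min$ and $\max$ of magog triangles, or equivalently by intersection and union of order ideals). The point is that the componentwise meet of two permutation magog triangles need not itself satisfy the permutation condition of Theorem~\ref{thm:permmagog}, and when it fails, one can find two incomparable maximal permutation triangles below both, preventing a meet in the induced subposet.

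Concretely, I would locate in $T_4^{\rm Perm}$ two permutation magog triangles $\alpha$ and $\beta$ whose componentwise minimum $\gamma$ violates the inequality of Theorem~\ref{thm:permmagog}, i.e.\ $\gamma$ contains a forbidden configuration $\alpha_{i,j}\geq \alpha_{i+1,j}=\alpha_{i+k+1,j-k}>\alpha_{i+k+1,j-k-1}+1$. Since $\gamma$ itself is not a permutation triangle, the common lower bounds of $\alpha$ and $\beta$ in $T_4^{\rm Perm}$ are exactly the permutation triangles $\leq \gamma$ componentwise, and I would show there are two distinct maximal such triangles (obtained by resolving the forbidden configuration in the two possible ways, lowering one of the two offending entries). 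These two maximal lower bounds are incomparable, so no greatest lower bound exists, and $T_4^{\rm Perm}$ is not a lattice. Finally, to extend to $n\geq 5$, I would embed this $4\times 4$ obstruction into order $n$ by padding the magog triangles so that the extra diagonals agree on the two triangles; the forbidden configuration and the two incomparable maximal lower bounds survive, so $T_n^{\rm Perm}$ is not a lattice for any $n\geq 4$.
\end{proof}

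The main obstacle is the extension step: I must verify that padding the $n=4$ counterexample into a valid order-$n$ permutation magog triangle really preserves both the permutation condition of Theorem~\ref{thm:permmagog} on $\alpha$ and $\beta$ and the \emph{non}-permutation character of their meet, while also ensuring the two maximal lower bounds remain incomparable and maximal in the larger poset. A clean way to handle this is to choose the padding diagonals to be ``generic'' (strictly increasing across the added rows) so they impose no new forbidden configurations and agree on $\alpha,\beta$; then all the relevant inequalities localize to the original $4\times 4$ block and the argument goes through uniformly in $n$.
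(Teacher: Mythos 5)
Your overall architecture matches the paper's only in the base case: the paper's proof is a two-line argument consisting of (i) a Sage computation \cite{sage} verifying that $T_3^{\rm Perm}$ is a lattice and $T_4^{\rm Perm}$ is not (Figure~\ref{fig:Tpermmagog34}), and (ii) the observation that $T_{n-1}^{\rm Perm}$ sits as an \emph{interval} inside $T_n^{\rm Perm}$, so that non-latticeness propagates upward (an interval of a lattice is always a lattice, since meets and joins of elements of $[a,b]$ remain in $[a,b]$). Your treatment of $n\leq 3$ and your structural explanation of \emph{why} a meet can fail in $T_4^{\rm Perm}$ are both sound and arguably more illuminating than the paper's: since $T_4$ is a distributive lattice with componentwise $\min$ as meet (Theorem~\ref{thm:Tn}), the common lower bounds of $\alpha,\beta$ in $T_4^{\rm Perm}$ are exactly the permutation triangles below $\gamma=\alpha\wedge\beta$, and a meet fails precisely when $\gamma$ violates the condition of Theorem~\ref{thm:permmagog} and the permutation triangles below $\gamma$ have two or more maximal elements. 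That is a correct and complete reduction --- but you never actually produce the pair $\alpha,\beta$, nor verify that ``resolving the forbidden configuration in the two possible ways'' yields valid, maximal, incomparable permutation magog triangles. As written, the $n=4$ case is a plan contingent on a finite check you have not performed (the paper performs it by computer).

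The more serious gap is the extension to $n\geq 5$. Your padding strategy requires exactly the verification you flag as the ``main obstacle'': after padding, the set of common lower bounds of $\alpha',\beta'$ in $T_n^{\rm Perm}$ consists of \emph{all} permutation triangles componentwise below the padded $\gamma'$, and these need not themselves be padded copies of order-$4$ triangles --- they may be strictly smaller in the padding region. You must rule out the possibility that among these extra lower bounds there is a single greatest one, and also that your two candidate maximal elements stop being maximal. Making ``generic padding'' do this is precisely the content of proving that the padded copy is an \emph{interval} of $T_n^{\rm Perm}$ (so that all common lower bounds of $\alpha'$ and $\beta'$ that matter are trapped inside the copy), which is the paper's route. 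So either you should prove the interval property directly --- at which point the padding machinery is superfluous --- or you should carry out the localization argument in detail; in its current form the step ``the forbidden configuration and the two incomparable maximal lower bounds survive'' is asserted rather than proved, and it is the one place where the argument could genuinely fail.
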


\begin{proof}
We have computed $T_n^{\rm Perm}$ in Sage~\cite{sage} for $n\leq 5$. $T_3^{\rm Perm}$ is the (non-distributive) lattice in Figure~\ref{fig:Tpermmagog34}, left. $T_4^{\rm Perm}$, shown in Figure~\ref{fig:Tpermmagog34}, right, is not a lattice. For any $n>1$, $T_{n-1}^{\rm Perm}$ sits as an interval inside $T_n^{\rm Perm}$. Thus if $T_{n-1}^{\rm Perm}$ is not a lattice, then neither is $T_n^{\rm Perm}$.
\end{proof}

We will find it easier to prove Theorems~\ref{thm:tamari} and \ref{thm:Catdistr} if we work with $T_n^{\rm Perm}$ from the boolean triangle perspective. So we first characterize the covering relations in $T_n$ in terms of boolean triangles rather than magog triangles. 
\begin{lemma}
\label{lem:magogboolcovers}
Suppose $t'$ covers $t$ in $T_n$. Let $b$ and $b'$ be the TSSCPP boolean triangles corresponding to $t$ and $t'$, respectively. Then $b$ and $b'$ differ only in one of the following:
\begin{enumerate}
\item $b'_{i,j} = 0 = b_{i+1,j}$ and $b'_{i+1,j} = 1 = b_{i,j}$ for some $1\leq i\leq n-2$, $n-i \leq j\leq  n-1$, or
\item $b'_{n-1,j} = 0$ and $b_{n-1,j} = 1$ for some $1\leq j\leq n-1$.
\end{enumerate}
That is, to create $b'$ from $b$, either {\rm (1)} swap a one with its southeast diagonal neighboring zero, or {\rm (2)} replace a one in the bottom row with a zero.
\end{lemma}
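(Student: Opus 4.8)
The plan is to describe the covering relations of $T_n$ on the magog side, where they are transparent, and then transport them through the bijections of Section~\ref{subsec:tsscppfam} to boolean triangles. By Theorem~\ref{thm:Tn}, $T_n$ is the distributive lattice $J(Q_n)$ ordered by componentwise comparison of magog triangles, so each cover adds a single join-irreducible element of $Q_n$. To turn this into an explicit change, I would use the order isomorphism $\alpha_{i,n-j}=t_{n+j,n+i}+i-j+1$ of Proposition~\ref{prop:magog}, which identifies $T_n$ with the componentwise order on the fundamental-domain heights of the TSSCPP. Under this identification a cover is exactly the addition of one cube to the fundamental domain, that is, the increment of a single magog entry by one with all other entries unchanged.

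It then remains to read off the effect of adding one cube on the nest of non-intersecting lattice paths of Proposition~\ref{prop:nilp}, and hence on the boolean triangle of Proposition~\ref{prop:boolbij}. The key is the local dictionary between cubes and paths: since the paths run alongside the stacks of cubes, each path $P_j$ (starting at $(j,j)$) is a level contour of the height function, and the $j$-th diagonal of the boolean triangle records its steps, with $b_{i,j}$ the $(i-n+j+1)$-st step, a vertical step encoded by $1$ and a southeast step by $0$. I would argue that inserting a single cube perturbs exactly one contour at exactly one place, in one of two ways. An interior cube converts a consecutive (vertical, southeast) pair of steps of some $P_j$ into (southeast, vertical); as consecutive steps of $P_j$ occupy $b_{i,j}$ and its southeast neighbor $b_{i+1,j}$, this is precisely the swap of a $1$ with its southeast neighboring $0$ of case~(1), available exactly when the step is not the last, i.e.\ for the interior rows $1\le i\le n-2$. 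A cube added against the $x$-axis instead flips the final step of some $P_j$ from vertical to southeast, moving its endpoint one unit outward; since final steps are recorded in the bottom row, this is the replacement of $b_{n-1,j}=1$ by $b'_{n-1,j}=0$ of case~(2). Conversely each of these two moves adds a single cube, and the non-intersecting condition is preserved automatically, because a one-cube change keeps the contours nested --- equivalently, it never violates the diagonal partial-sum inequality of Definition~\ref{def:bool}.

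The main obstacle is this cube--contour dictionary itself: making it precise enough to guarantee that every single-cube addition is one of the two moves, with the correct path index $j$ and step index $i$, and that a cover can produce no other change to the boolean triangle. This requires care at the boundary of the fundamental domain and in the interaction between a perturbed contour and its neighbors, so that the resulting boolean triangle is certified valid. Once the dictionary is established, confirming the index ranges and that cases~(1) and~(2) are mutually exclusive and together exhaust all covers is routine bookkeeping.
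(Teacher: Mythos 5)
Your proposal is correct and follows essentially the same route as the paper: the paper's proof likewise observes that a cover in $T_n$ amounts to adding a single box in the fundamental domain, and that this either swaps two adjacent steps of different types in the nest of lattice paths (case~(1)) or flips the final step of a path when the box is added along the center border (case~(2)). In fact the paper's argument is even terser than yours and does not spell out the cube--contour dictionary whose precision you rightly flag as the remaining work.
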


\begin{proof}
The magog covering relations on TSSCPP (as stacks of cubes in a corner) consist in a box being added within the fundamental domain. This means in the nest of non-intersecting lattice paths, either (1) two adjacent steps of different types swap to add the box, or (2) a box is added along the center border of the fundamental domain, in which case a step at the end of a path changes from one type to the other. These correspond to cases 1.\ and 2.\ above.
\end{proof}

The Tamari lattice is a natural partial order often defined on the Catalan objects binary trees or triangulations of a convex polygon. The covering relations on binary trees are given by a local move called \emph{tree rotation}; the triangulation covering relations are given by \emph{flipping a diagonal}. See~\cite{tamariref} for further references. Rather than using the definition in terms of binary trees or triangulations, we will use the original definition, often called the \emph{bracket vector encoding}, given by S.~Huang and D.~Tamari in~\cite{HuangTamari}. See Figure~\ref{fig:Cat_compare}, left.

\begin{definition}[\cite{HuangTamari}]
\label{def:tamari}
Let the \emph{Tamari lattice of order $n$}, which we denote as $Tam_n$, be defined as the set of sequences of $n$ positive integers $x_1, x_2,\ldots, x_n$, such that $i\leq x_i \leq n$ and if $i\leq j\leq x_i$ then $x_j\leq x_i$, ordered by reverse componentwise comparison. Such sequences are called \emph{bracket vectors}.
\end{definition}

Note that for convenience, we use the reverse componentwise comparison, but as the Tamari lattice is self-dual, this convention is inconsequential. 

\medskip
We show in Theorem~\ref{thm:tamari} below that $T_n^{\rm Perm}$ contains $Tam_n$ as the induced subposet of permutations that \emph{avoid the pattern} $132$. So we will first need to define pattern avoidance in permutations.

\begin{definition}
\label{def:patterns}
The permutation $\pi=\pi(1) \pi(2) \ldots \pi(k)$ is \emph{contained} in the permutation $\sigma=\sigma(1)\sigma(2)\ldots\sigma(n)$, $k\leq n$, if there is a substring $\sigma(i_1), \sigma(i_2),\ldots, \sigma(i_k)$, $i_1<i_2<\cdots<i_k$ whose values have the same relative order as $\pi(1), \pi(2), \ldots, \pi(k)$. A permutation $\sigma$ \emph{avoids} the pattern $\pi$ if it does not contain~$\pi$.
\end{definition}

\begin{theorem}
\label{thm:tamari}
$T_n^{\rm Perm}$ contains a subposet which is isomorphic to $Tam_n$. In particular, the subposet of $T_n^{\rm Perm}$ consisting of the $132$-avoiding permutations is isomorphic to $Tam_n$.
\end{theorem}

\begin{proof}
Let $b$ be a permutation TSSCPP boolean triangle. Let $\Sigma_k:=\displaystyle\sum_{j=n-k}^{n-1} b_{k,j}$, that is $\Sigma_k$ denotes the sum of the $k$th row of $b$.
Define a sequence of positive integers $x_1,x_2,\ldots,x_n$ where $x_i = i + \Sigma_{n-i}$, and we consider $\Sigma_0=0$.
%the sum of the empty row zero to be zero. 
This is an injection from the set of permutation TSSCPP boolean triangles of order $n$ to the set of sequences of integers $x_1,x_2,\ldots,x_n$ with $i\leq x_i\leq n$ since the sum of row $n-i$ of $b$ can be at most $n-i$. 
%rows of a permutation TSSCPP boolean triangle must be decreasing. 

We wish to show that if the permutation associated with $b$ is $132$-avoiding, then $x_1,x_2,\ldots,x_n$ satisfies the inequalities in Definition~\ref{def:tamari} and is thus a bracket vector. As discussed above, $i\leq x_i\leq n$. 
Now we wish to show that the $132$-avoiding condition on $b$ corresponds to the property that if $i\leq j\leq x_i$ then $x_j\leq x_i$.
In the bijection of Theorem~\ref{thm:tsscppsn}, the permutation $132$ corresponds to the TSSCPP boolean triangle 
\scalebox{.8}{$\begin{array}{ccc}
& 1 &\\
1 & &0
\end{array}$}. 
More generally, we wish to show that $b$ corresponds to a permutation that is $132$-avoiding if and only if $b$ has the following property:

\emph{Property $(*)$:}  If $b_{i,j}=0$, at most one of the following may be true: $b_{i,j-1}=1$ or $b_{i',j}=1$ for some $i'< i$.

Suppose to the contrary that $b_{i,j}=0$, $b_{i,j-1}=1$, and let $i'$ be the largest number less than $i$ such that $b_{i',j}=1$ (suppose such an $i'$ exists). This means that the corresponding monotone triangle entries satisfy $a_{i+1,j}=a_{i,j}=\cdots = a_{i'+1,j}$, $a_{i,j-1}=a_{i+1,j-2}$, and $a_{i',j}=a_{i'+1,j-1}$. The equalities $a_{i,j}=a_{i+1,j}$ and $a_{i,j-1}=a_{i+1,j-2}$ together mean that $a_{i+1,j-1}$ must be the new value in row $i+1$, since the new value will always correspond to the entry horizontally between the ones and zeros in the previous line. So in the one-line notation $i\rightarrow \sigma(i)$ for the corresponding permutation, we have $\sigma(i+1)=a_{i+1,j-1}$. 
Now since $a_{i',j}=a_{i'+1,j-1}$, we know the new entry in row $i'+1$ is $a_{i'+1,j}$ or something larger, so $\sigma(i'+1)\geq a_{i'+1,j}=a_{i+1,j}>a_{i+1,j-1}=\sigma(i+1)$, where the last inequality is from the row-strict condition of the monotone triangle definition. Finally, $a_{i',j}=\sigma(i'')$ for some $i''\leq i'$, since $a_{i',j}$ must appear as the new value in row $i'$ or above. Now $a_{i',j}=a_{i'+1,j-1}$ since $b_{i',j}=1$, and $a_{i'+1,j-1}\leq a_{i+1,j-1}$ by the monotone triangle definition, so $\sigma(i'')=a_{i',j}\leq a_{i+1,j-1}=\sigma(i+1)$. Since $\sigma$ is a permutation and $i''\neq i+1$, the inequality must be strict. Therefore $\sigma(i'')<\sigma(i+1)<\sigma(i'+1)$ and $i''<i'+1<i+1$, so $\sigma(i'')$, $\sigma(i+1)$, $\sigma(i'+1)$ is our $132$ pattern.
Therefore, we have proven $(*)$ holds whenever $b$ corresponds to a $132$-avoiding permutation. %claim that if $b_{i,j}=0$ at most one of the following may be true: $b_{i,j-1}=1$ or $b_{i',j}=1$ for some $i'< i$. Call this $132$-avoiding condition on $b$ $(*)$.
Furthermore, any $132$ pattern in the permutation corresponding to $b$ will produce a violation of $(*)$ in the same way as discussed above. So $(*)$ holds if and only if the permutation corresponding to $b$ contains a $132$ pattern.

Recall we have set $x_i=i+\Sigma_{n-i}$.
We wish to show that the Tamari condition (if $i\leq j\leq x_i$ then $x_j\leq x_i$), which we denote as $(**)$, is equivalent to the $132$-avoiding condition $(*)$ on the corresponding boolean triangle $b$. Suppose $b$ satisfies $(*)$ and $i$ and $j$ are such that $i\leq j\leq x_i$. We wish to show $(**)$. 

Since $x_i\geq j$ we have  $i+\Sigma_{n-i}\geq j$ so that $\Sigma_{n-i}\geq j-i$.
Suppose $\Sigma_{n-i}=m\geq j-i$. Then $b_{n-i,\ell}$ equals $1$ for all $i\leq\ell\leq m+i-1$ and $0$ for all $\ell\geq m+i$, provided these entries exist. 
So $b_{n-i,m+i-1}=1$ and $b_{n-i,m+i}=0$, then by $(*)$, $b_{n-j,m+i}=0$ as well since $j>i$. Thus $\Sigma_{n-j} + j-i \leq \Sigma_{n-i}$, which immediately gives $x_j\leq x_i$. So $(*)$ implies $(**)$ on $x_1,\ldots,x_n$. 

Now suppose $x_1,\ldots,x_n$ satisfies $(**)$ and we define $b$ by $\Sigma_{n-i}=x_i-i$. This map is well defined since the entries of $b$ are $0$ or $1$ and the ones are all left-justified. Let $i\leq j\leq x_i$, so by $(**)$, $x_j\leq x_i$. Then $\Sigma_{n-j}+j\leq \Sigma_{n-i}+i$, so by inverting the above argument, we have $(*)$ on $b$.
Thus the $132$-avoiding permutation TSSCPP are in bijection with the bracket vectors of the Tamari lattice.

Finally, we must show that the reverse componentwise comparison covering relation of the bracket vectors $x_1,x_2,\ldots,x_n$ equals the magog partial order on $132$-avoiding permutation TSSCPP boolean triangles, as in Lemma~\ref{lem:magogboolcovers}. The covering relations on boolean triangles in any subposet of $T_n$ are given by composing the covering relations in Lemma~\ref{lem:magogboolcovers} until another element in the subposet is obtained.  Now by Lemma~\ref{lem:magogboolcovers}, $b'$ covers $b$ in $T_n$ if they differ in the following way: either a bottom-row zero of $b'$ turns into a one in $b$, or a one of $b'$ swaps with a zero to its northwest. Thus, $b'\geq b$ in $T_n^{\rm Perm}$ restricted to $132$-avoiding permutations if it is possible to transform $b'$ into $b$ by moving one or more ones northwest in their diagonal and/or adding ones in the bottom row. But note that a one cannot slide northwest in its diagonal leaving a zero behind, since that would violate the $132$-avoiding condition. So instead, if a one slides up to a new, higher, row, another one slides into its same place. This has the effect of incrementing an entry in $x_1,\ldots,x_n$ by some amount. So this is the reverse componentwise covering relation of the Tamari lattice.
\end{proof}

\begin{figure}[htbp]
\begin{center}
\scalebox{1}{
\begin{tikzpicture}[>=latex,line join=bevel,]
\node (node_5) at (35.500000bp,283.500000bp) [draw,draw=none] {\large{\mymk{\textcolor{red}{\textbf{321}}}}};
  \node (node_4) at (13.500000bp,216.500000bp) [draw,draw=none] {\large{\mymk{\textcolor{red}{\textbf{312}}}}};
  \node (node_3) at (58.500000bp,216.500000bp) [draw,draw=none] {\large{\mymk{\textcolor{red}{\textbf{231}}}}};
  \node (node_2) at (14.500000bp,149.500000bp) [draw,draw=none] {\large{\mymk{\textcolor{red}{\textbf{213}}}}};
  \node (node_1) at (35.500000bp,82.500000bp) [draw,draw=none] {\large{132}};
  \node (node_0) at (35.500000bp,15.500000bp) [draw,draw=none] {\large{\mymk{\textcolor{red}{\textbf{123}}}}};
  \draw [black,->] (node_4) ..controls (21.080000bp,239.900000bp) and (24.308000bp,249.430000bp)  .. (node_5);
  \draw [black,->] (node_1) ..controls (28.265000bp,105.900000bp) and (25.184000bp,115.430000bp)  .. (node_2);
  \draw [black,->] (node_1) ..controls (41.952000bp,120.530000bp) and (49.455000bp,163.590000bp)  .. (node_3);
  \draw [black,->] (node_3) ..controls (50.576000bp,239.900000bp) and (47.201000bp,249.430000bp)  .. (node_5);
  \draw [black,->] (node_0) ..controls (35.500000bp,38.729000bp) and (35.500000bp,48.018000bp)  .. (node_1);
  \draw [black,->] (node_2) ..controls (14.158000bp,172.730000bp) and (14.015000bp,182.020000bp)  .. (node_4);
\end{tikzpicture}
}
\scalebox{.6}{
\begin{tikzpicture}[>=latex,line join=bevel,]
\node (node_22) at (115.66bp,485.5bp) [draw,draw=none] {\Large{\mymk{\textcolor{red}{\textbf{4312}}}}};
  \node (node_23) at (115.66bp,538.5bp) [draw,draw=none] {\Large{\mymk{\textcolor{red}{\textbf{4321}}}}};
  \node (node_20) at (140.66bp,432.5bp) [draw,draw=none] {\Large{\mymk{\textcolor{red}{\textbf{4213}}}}};
  \node (node_21) at (203.66bp,432.5bp) [draw,draw=none] {\Large{\mymk{\textcolor{red}{\textbf{4231}}}}};
  \node (node_9) at (33.66bp,220.5bp) [draw,draw=none] {\Large{\mymk{\textcolor{red}{\textbf{2341}}}}};
  \node (node_8) at (96.66bp,220.5bp) [draw,draw=none] {\Large{\mymk{\textcolor{red}{\textbf{2314}}}}};
  \node (node_7) at (222.66bp,220.5bp) [draw,draw=none] {\Large{2143}};
  \node (node_6) at (222.66bp,167.5bp) [draw,draw=none] {\Large{\mymk{\textcolor{red}{\textbf{2134}}}}};
  \node (node_5) at (159.66bp,220.5bp) [draw,draw=none] {\Large{1432}};
  \node (node_4) at (159.66bp,167.5bp) [draw,draw=none] {\Large{1423}};
  \node (node_3) at (57.66bp,167.5bp) [draw,draw=none] {\Large{1342}};
  \node (node_2) at (133.66bp,114.5bp) [draw,draw=none] {\Large{1324}};
  \node (node_1) at (107.66bp,61.5bp) [draw,draw=none] {\Large{1243}};
  \node (node_0) at (107.66bp,8.5bp) [draw,draw=none] {\Large{\mymk{\textcolor{red}{\textbf{1234}}}}};
  \node (node_19) at (210.66bp,379.5bp) [draw,draw=none] {\Large{4132}};
  \node (node_18) at (261.66bp,326.5bp) [draw,draw=none] {\Large{\mymk{\textcolor{red}{\textbf{4123}}}}};
  \node (node_17) at (32.66bp,379.5bp) [draw,draw=none] {\Large{\mymk{\textcolor{red}{\textbf{3421}}}}};
  \node (node_16) at (33.66bp,326.5bp) [draw,draw=none] {\Large{\mymk{\textcolor{red}{\textbf{3412}}}}};
  \node (node_15) at (96.66bp,326.5bp) [draw,draw=none] {\Large{\mymk{\textcolor{red}{\textbf{3241}}}}};
  \node (node_14) at (159.66bp,326.5bp) [draw,draw=none] {\Large{\mymk{\textcolor{red}{\textbf{3214}}}}};
  \node (node_13) at (159.66bp,273.5bp) [draw,draw=none] {\Large{3142}};
  \node (node_12) at (222.66bp,273.5bp) [draw,draw=none] {\Large{\mymk{\textcolor{red}{\textbf{3124}}}}};
  \node (node_11) at (33.66bp,273.5bp) [draw,draw=none] {\Large{2431}};
  \node (node_10) at (96.66bp,273.5bp) [draw,draw=none] {\Large{2413}};
  \draw [black,->] (node_11) ..controls (52.541bp,289.78bp) and (67.706bp,302.06bp)  .. (node_15);
  \draw [black,->] (node_12) ..controls (234.0bp,289.33bp) and (242.65bp,300.65bp)  .. (node_18);
  \draw [black,->] (node_7) ..controls (203.78bp,236.78bp) and (188.61bp,249.06bp)  .. (node_13);
  \draw [black,->] (node_6) ..controls (222.66bp,182.81bp) and (222.66bp,193.03bp)  .. (node_7);
  \draw [black,->] (node_1) ..controls (115.07bp,77.031bp) and (120.52bp,87.72bp)  .. (node_2);
  \draw [black,->] (node_3) ..controls (50.822bp,183.03bp) and (45.792bp,193.72bp)  .. (node_9);
  \draw [black,->] (node_5) ..controls (120.13bp,237.5bp) and (85.879bp,251.36bp)  .. (node_11);
  \draw [black,->] (node_19) ..controls (189.58bp,395.86bp) and (172.5bp,408.3bp)  .. (node_20);
  \draw [black,->] (node_8) ..controls (96.66bp,235.81bp) and (96.66bp,246.03bp)  .. (node_10);
  \draw [black,->] (node_20) ..controls (133.54bp,448.03bp) and (128.3bp,458.72bp)  .. (node_22);
  \draw [black,->] (node_4) ..controls (159.66bp,182.81bp) and (159.66bp,193.03bp)  .. (node_5);
  \draw [black,->] (node_7) ..controls (222.66bp,235.81bp) and (222.66bp,246.03bp)  .. (node_12);
  \draw [black,->] (node_16) ..controls (46.007bp,343.75bp) and (56.228bp,357.92bp)  .. (63.66bp,371.0bp) .. controls (82.339bp,403.88bp) and (99.709bp,444.65bp)  .. (node_22);
  \draw [black,->] (node_10) ..controls (115.54bp,289.78bp) and (130.71bp,302.06bp)  .. (node_14);
  \draw [black,->] (node_22) ..controls (115.66bp,500.81bp) and (115.66bp,511.03bp)  .. (node_23);
  \draw [black,->] (node_17) ..controls (42.439bp,406.95bp) and (61.556bp,456.12bp)  .. (83.66bp,494.0bp) .. controls (89.291bp,503.65bp) and (96.613bp,513.74bp)  .. (node_23);
  \draw [black,->] (node_13) ..controls (173.0bp,290.4bp) and (183.78bp,304.44bp)  .. (190.66bp,318.0bp) .. controls (197.63bp,331.75bp) and (202.94bp,348.46bp)  .. (node_19);
  \draw [black,->] (node_5) ..controls (140.78bp,236.78bp) and (125.61bp,249.06bp)  .. (node_10);
  \draw [black,->] (node_2) ..controls (124.93bp,140.04bp) and (110.85bp,179.61bp)  .. (node_8);
  \draw [black,->] (node_19) ..controls (208.7bp,394.81bp) and (207.29bp,405.03bp)  .. (node_21);
  \draw [black,->] (node_10) ..controls (77.78bp,289.78bp) and (62.615bp,302.06bp)  .. (node_16);
  \draw [black,->] (node_15) ..controls (122.53bp,352.64bp) and (165.59bp,394.5bp)  .. (node_21);
  \draw [black,->] (node_3) ..controls (89.208bp,184.27bp) and (115.92bp,197.63bp)  .. (node_5);
  \draw [black,->] (node_16) ..controls (33.38bp,341.81bp) and (33.179bp,352.03bp)  .. (node_17);
  \draw [black,->] (node_2) ..controls (141.07bp,130.03bp) and (146.52bp,140.72bp)  .. (node_4);
  \draw [black,->] (node_21) ..controls (182.58bp,458.42bp) and (147.89bp,499.41bp)  .. (node_23);
  \draw [black,->] (node_3) ..controls (69.003bp,183.33bp) and (77.655bp,194.65bp)  .. (node_8);
  \draw [black,->] (node_5) ..controls (159.66bp,235.81bp) and (159.66bp,246.03bp)  .. (node_13);
  \draw [black,->] (node_14) ..controls (155.21bp,351.89bp) and (148.08bp,390.89bp)  .. (node_20);
  \draw [black,->] (node_13) ..controls (159.66bp,288.81bp) and (159.66bp,299.03bp)  .. (node_14);
  \draw [black,->] (node_13) ..controls (140.78bp,289.78bp) and (125.61bp,302.06bp)  .. (node_15);
  \draw [black,->] (node_0) ..controls (107.66bp,23.805bp) and (107.66bp,34.034bp)  .. (node_1);
  \draw [black,->] (node_4) ..controls (178.54bp,183.78bp) and (193.71bp,196.06bp)  .. (node_7);
  \draw [black,->] (node_18) ..controls (246.6bp,342.56bp) and (234.81bp,354.35bp)  .. (node_19);
  \draw [black,->] (node_11) ..controls (17.669bp,290.32bp) and (6.3382bp,303.75bp)  .. (1.6605bp,318.0bp) .. controls (-3.6109bp,334.06bp) and (6.7211bp,351.09bp)  .. (node_17);
  \draw [black,->] (node_1) ..controls (95.828bp,87.113bp) and (76.668bp,126.96bp)  .. (node_3);
  \draw [black,->] (node_2) ..controls (160.99bp,131.16bp) and (183.86bp,144.26bp)  .. (node_6);
  \draw [black,->] (node_12) ..controls (203.78bp,289.78bp) and (188.61bp,302.06bp)  .. (node_14);
  \draw [black,->] (node_9) ..controls (33.66bp,235.81bp) and (33.66bp,246.03bp)  .. (node_11);
\end{tikzpicture}
}
\end{center}
\caption{Left: $T_3^{\rm Perm}$, the poset of permutation magog triangles of order $3$; Right: $T_4^{\rm Perm}$. In both pictures, the $132$-avoiding permutations are circled; Theorem~\ref{thm:tamari} shows the subposet of $T_n^{\rm Perm}$ consisting of the $132$-avoiding permutations is the Tamari lattice of order $n$.}
\label{fig:Tpermmagog34}
\end{figure}
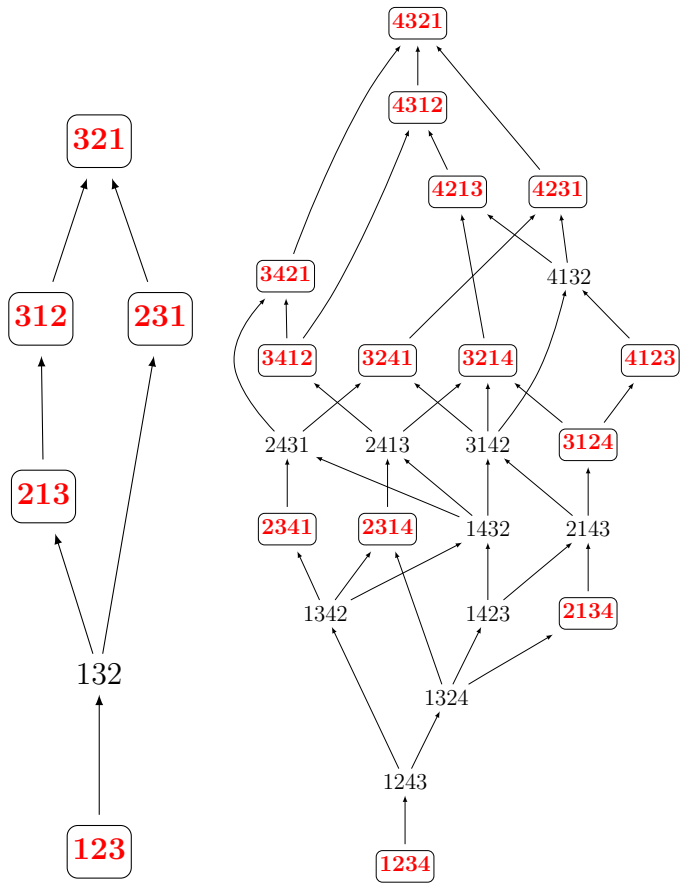

\begin{figure}
\begin{center}
\begin{tikzpicture}[>=latex,line join=bevel,]
\node (node_4) at (47.000000bp,174.500000bp) [draw,draw=none] {$123$};
  \node (node_3) at (75.000000bp,119.500000bp) [draw,draw=none] {$223$};
  \node (node_2) at (72.000000bp,64.500000bp) [draw,draw=none] {$323$};
  \node (node_1) at (19.000000bp,119.500000bp) [draw,draw=none] {$133$};
  \node (node_0) at (47.000000bp,9.500000bp) [draw,draw=none] {$333$};
  \draw [black,->] (node_1) ..controls (27.456000bp,136.510000bp) and (33.024000bp,147.050000bp)  .. (node_4);
  \draw [black,->] (node_0) ..controls (40.153000bp,36.908000bp) and (29.875000bp,76.554000bp)  .. (node_1);
  \draw [black,->] (node_0) ..controls (54.514000bp,26.429000bp) and (59.416000bp,36.822000bp)  .. (node_2);
  \draw [black,->] (node_2) ..controls (72.889000bp,81.198000bp) and (73.452000bp,91.155000bp)  .. (node_3);
  \draw [black,->] (node_3) ..controls (66.544000bp,136.510000bp) and (60.976000bp,147.050000bp)  .. (node_4);
\end{tikzpicture}
\hspace{1in}
\begin{tikzpicture}[>=latex,line join=bevel,]
\node (node_4) at (47.000000bp,9.500000bp) [draw,draw=none] {$333$};
  \node (node_3) at (47.000000bp,64.500000bp) [draw,draw=none] {$233$};
  \node (node_2) at (75.000000bp,119.500000bp) [draw,draw=none] {$223$};
  \node (node_1) at (19.000000bp,119.500000bp) [draw,draw=none] {$133$};
  \node (node_0) at (47.000000bp,174.500000bp) [draw,draw=none] {$123$};
  \draw [black,->] (node_3) ..controls (55.456000bp,81.507000bp) and (61.024000bp,92.046000bp)  .. (node_2);
  \draw [black,->] (node_4) ..controls (47.000000bp,26.198000bp) and (47.000000bp,36.155000bp)  .. (node_3);
  \draw [black,->] (node_2) ..controls (66.544000bp,136.510000bp) and (60.976000bp,147.050000bp)  .. (node_0);
  \draw [black,->] (node_1) ..controls (27.456000bp,136.510000bp) and (33.024000bp,147.050000bp)  .. (node_0);
  \draw [black,->] (node_3) ..controls (38.544000bp,81.507000bp) and (32.976000bp,92.046000bp)  .. (node_1);
\end{tikzpicture}
\end{center}
\caption{Left: The poset $Tam_3$ of Definition~\ref{def:tamari}; Right: The poset $Cat_3$ of Definition~\ref{def:Catdistr}}
\label{fig:Cat_compare}
\end{figure}
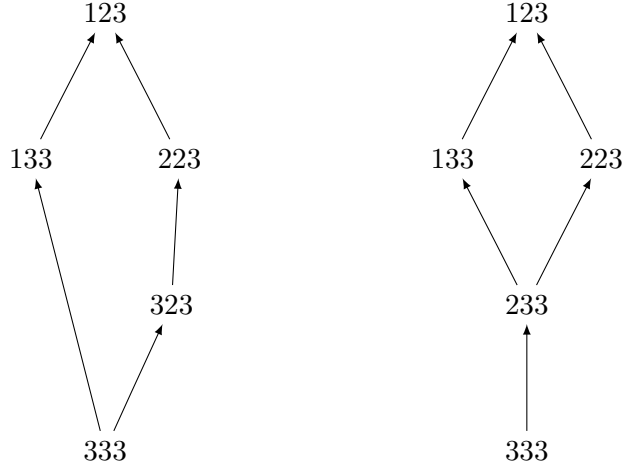

We now show $T_n^{\rm Perm}$ contains another Catalan poset. The \emph{Catalan distributive lattice} of order $n$ is the natural containment partial order on several Catalan objects, such as \emph{Dyck paths}, partitions inside a staircase, and order ideals in the \emph{Type $A$ positive root poset}. We give the following equivalent definition, written in a similar form to Definition~\ref{def:tamari} above. See Figure~\ref{fig:Cat_compare}. 

\begin{definition}
\label{def:Catdistr}
Let the Catalan distributive lattice of order $n$, which we denote as $Cat_n$, be defined as the set of sequences of $n$ positive integers $x_1, x_2,\ldots, x_n$, such that $i\leq x_i \leq n$ and if $i\leq j$ then $x_i\leq x_j$, ordered by reverse componentwise comparison.
\end{definition}

Note that by subtracting $i$ from $x_i$, the \emph{area sequence} of the corresponding Dyck path is obtained.

\begin{theorem}
\label{thm:Catdistr}
$T_n^{\rm Perm}$ contains a subposet which is isomorphic to $Cat_n$. In particular, the subposet of $T_n^{\rm Perm}$ consisting of the $213$-avoiding permutations is isomorphic to $Cat_n$.
\end{theorem}

\begin{proof}
Let $b$ be a permutation TSSCPP boolean triangle. As in the proof of Theorem~\ref{thm:tamari}, define a sequence of positive integers $x_1,x_2,\ldots,x_n$ where $x_i = i + \Sigma_{n-i}$, and we consider $\Sigma_0=0$. This is an injection from the set of permutation TSSCPP boolean triangles of order $n$ to the set of sequences of integers $x_1,x_2,\ldots,x_n$ with $i\leq x_i\leq n$, since the sum of row $n-i$ of $b$ can be at most $n-i$.

We wish to show that if the permutation associated with $b$ is $213$-avoiding, then $x_1,x_2,\ldots,x_n$ satisfies the inequalities in Definition~\ref{def:Catdistr}. As discussed above, $i\leq x_i\leq n$. Now we wish to show that the $213$-avoiding condition on the permutation corresponding to $b$ corresponds to the property that if $i\leq j$ then $x_i\leq x_j$.
In the bijection of Theorem~\ref{thm:tsscppsn}, the permutation $213$ corresponds to the TSSCPP boolean triangle 
\scalebox{.8}{$\begin{array}{ccc}
& 0 &\\
1 & &1
\end{array}$}.
More generally, we wish to show that $b$ corresponds to a permutation that is $213$-avoiding if and only if $b$ has the following property:

\emph{Property $(***)$}: If $b_{i,j}=0$, then $b_{i',j}=0$ for all $i'>i$.

%Let $b$ be the boolean triangle of a $213$-avoiding permutation. We claim that if $b_{i,j}=0$, then $b_{i',j}=0$ for all $i'>i$. 
Suppose to the contrary that $b_{i,j}=0$ and $b_{i+1,j}=1$. 
Then in the corresponding monotone triangle, $a_{i,j}=a_{i+1,j}= a_{i+2,j-1}$. Also, $\sigma(i+2)>a_{i+2,j-1}$ since $b_{i+1,j}=1$ and the ones are left-justified, while $\sigma(i+1)<a_{i+1,j}$ since $b_{i,j}=0$ and the new value in a row of the monotone triangle $a$ sits horizontally between the rightmost one and leftmost zero in the previous row of the corresponding boolean triangle. Finally, since the value $a_{i,j}$ is in row $i$ of the monotone triangle, $a_{i,j}=\sigma(k)$ for some $k\leq i$. That is, $a_{i,j}$ must be the new value in $a$ either in row $i$ or in some previous row. Putting these facts together, we have $\sigma(i+1)<\sigma(k)<\sigma(i+2)$ and $k<i+1<i+2$, so $\sigma(k), \sigma(i+1), \sigma(i+2)$ is a $213$-pattern in the permutation $\sigma$. Therefore, we have proven that $(***)$ holds whenever $b$ corresponds to a $213$-avoiding permutation. Furthermore, any $213$ pattern in the permutation corresponding to $b$ will produce a violation of $(***)$ in the same way as discussed above. So $(***)$ holds if and only if the permutation corresponding to $b$ contains a $213$ pattern. 
 
Now suppose $i< j$ and $x_i>x_j$. Since $i< j$, this means $n-i> n-j$. Now $x_i>x_j$ implies that in the boolean triangle $b$, $i + \Sigma_{n-i} > j + \Sigma_{n-j}$. That is, $\Sigma_{n-i} > j-i+\Sigma_{n-j}$.
%$i$ plus the sum of row $n-i$ is greater than $j$ plus the sum of row $n-j$. That is,  the sum of row $n-i$ is greater than $j-i$ plus the sum of row $n-j$. 
Thus, there must be a $k$ such that $b_{n-j,k}=0$ and $b_{n-i,k}=1$, since the entries equal to one in a permutation boolean triangle must be left-justified. Thus by the previous paragraph, there must be a $213$-pattern in the corresponding permutation. Therefore, the $213$-avoiding condition on the permutation corresponding to $b$ is equivalent to the property that if $i\leq j$ then $x_i\leq x_j$. 

Finally, we show that the reverse componentwise comparison covering relation of the Catalan distributive sequences $x_1,\ldots,x_n$ equals the magog partial order on $213$-avoiding permutation TSSCPP boolean triangles, whose covering relations are given in Lemma \ref{lem:magogboolcovers}. 
The reverse componentwise comparison covering relation on the sequences $x_1,\ldots,x_n$ corresponds to the rightmost one in some row of $b$ turning into a zero. 
This one must also have a one to its northwest and a zero to its southeast (if these entries exist in $b$) so that the $213$-avoiding condition is satisfied. 
Thus, removing this one is equivalent to sliding it to the southeast until it leaves the matrix, which is a composition of the magog covering relations of Lemma~\ref{lem:magogboolcovers}. 
Also, each slide of this one a single entry to the southeast results in another boolean triangle, since this sliding will not violate the defining inequality of Definition~\ref{def:bool}. 
Thus, in the case of $213$-avoiding boolean triangles, the magog covering relations correspond to the Catalan distributive order.
\end{proof}

\begin{figure}[hbtp]
\begin{center}
\scalebox{1}{
\begin{tikzpicture}[>=latex,line join=bevel,]
\node (node_5) at (35.500000bp,283.500000bp) [draw,draw=none] {\large{\mymk{\textcolor{magenta}{\textbf{321}}}}};
  \node (node_4) at (13.500000bp,216.500000bp) [draw,draw=none] {\large{\mymk{\textcolor{magenta}{\textbf{312}}}}};
  \node (node_3) at (58.500000bp,216.500000bp) [draw,draw=none] {\large{\mymk{\textcolor{magenta}{\textbf{231}}}}};
  \node (node_2) at (14.500000bp,149.500000bp) [draw,draw=none] {\large{213}};
  \node (node_1) at (35.500000bp,82.500000bp) [draw,draw=none] {\large{\mymk{\textcolor{magenta}{\textbf{132}}}}};
  \node (node_0) at (35.500000bp,15.500000bp) [draw,draw=none] {\large{\mymk{\textcolor{magenta}{\textbf{123}}}}};
  \draw [black,->] (node_4) ..controls (21.080000bp,239.900000bp) and (24.308000bp,249.430000bp)  .. (node_5);
  \draw [black,->] (node_1) ..controls (28.265000bp,105.900000bp) and (25.184000bp,115.430000bp)  .. (node_2);
  \draw [black,->] (node_1) ..controls (41.952000bp,120.530000bp) and (49.455000bp,163.590000bp)  .. (node_3);
  \draw [black,->] (node_3) ..controls (50.576000bp,239.900000bp) and (47.201000bp,249.430000bp)  .. (node_5);
  \draw [black,->] (node_0) ..controls (35.500000bp,38.729000bp) and (35.500000bp,48.018000bp)  .. (node_1);
  \draw [black,->] (node_2) ..controls (14.158000bp,172.730000bp) and (14.015000bp,182.020000bp)  .. (node_4);
\end{tikzpicture}
}
\scalebox{.6}{
\begin{tikzpicture}[>=latex,line join=bevel,]
\node (node_22) at (115.66bp,485.5bp) [draw,draw=none] {\Large{\mymk{\textcolor{magenta}{\textbf{4312}}}}};
  \node (node_23) at (115.66bp,538.5bp) [draw,draw=none] {\Large{\mymk{\textcolor{magenta}{\textbf{4321}}}}};
  \node (node_20) at (140.66bp,432.5bp) [draw,draw=none] {\Large{4213}};
  \node (node_21) at (203.66bp,432.5bp) [draw,draw=none] {\Large{\mymk{\textcolor{magenta}{\textbf{4231}}}}};
  \node (node_9) at (33.66bp,220.5bp) [draw,draw=none] {\Large{\mymk{\textcolor{magenta}{\textbf{2341}}}}};
  \node (node_8) at (96.66bp,220.5bp) [draw,draw=none] {\Large{2314}};
  \node (node_7) at (222.66bp,220.5bp) [draw,draw=none] {\Large{2143}};
  \node (node_6) at (222.66bp,167.5bp) [draw,draw=none] {\Large{2134}};
  \node (node_5) at (159.66bp,220.5bp) [draw,draw=none] {\Large{\mymk{\textcolor{magenta}{\textbf{1432}}}}};
  \node (node_4) at (159.66bp,167.5bp) [draw,draw=none] {\Large{\mymk{\textcolor{magenta}{\textbf{1423}}}}};
  \node (node_3) at (57.66bp,167.5bp) [draw,draw=none] {\Large{\mymk{\textcolor{magenta}{\textbf{1342}}}}};
  \node (node_2) at (133.66bp,114.5bp) [draw,draw=none] {\Large{1324}};
  \node (node_1) at (107.66bp,61.5bp) [draw,draw=none] {\Large{\mymk{\textcolor{magenta}{\textbf{1243}}}}};
  \node (node_0) at (107.66bp,8.5bp) [draw,draw=none] {\Large{\mymk{\textcolor{magenta}{\textbf{1234}}}}};
  \node (node_19) at (210.66bp,379.5bp) [draw,draw=none] {\Large{\mymk{\textcolor{magenta}{\textbf{4132}}}}};
  \node (node_18) at (261.66bp,326.5bp) [draw,draw=none] {\Large{\mymk{\textcolor{magenta}{\textbf{4123}}}}};
  \node (node_17) at (32.66bp,379.5bp) [draw,draw=none] {\Large{\mymk{\textcolor{magenta}{\textbf{3421}}}}};
  \node (node_16) at (33.66bp,326.5bp) [draw,draw=none] {\Large{\mymk{\textcolor{magenta}{\textbf{3412}}}}};
  \node (node_15) at (96.66bp,326.5bp) [draw,draw=none] {\Large{3241}};
  \node (node_14) at (159.66bp,326.5bp) [draw,draw=none] {\Large{3214}};
  \node (node_13) at (159.66bp,273.5bp) [draw,draw=none] {\Large{3142}};
  \node (node_12) at (222.66bp,273.5bp) [draw,draw=none] {\Large{3124}};
  \node (node_11) at (33.66bp,273.5bp) [draw,draw=none] {\Large{\mymk{\textcolor{magenta}{\textbf{2431}}}}};
  \node (node_10) at (96.66bp,273.5bp) [draw,draw=none] {\Large{2413}};
  \draw [black,->] (node_11) ..controls (52.541bp,289.78bp) and (67.706bp,302.06bp)  .. (node_15);
  \draw [black,->] (node_12) ..controls (234.0bp,289.33bp) and (242.65bp,300.65bp)  .. (node_18);
  \draw [black,->] (node_7) ..controls (203.78bp,236.78bp) and (188.61bp,249.06bp)  .. (node_13);
  \draw [black,->] (node_6) ..controls (222.66bp,182.81bp) and (222.66bp,193.03bp)  .. (node_7);
  \draw [black,->] (node_1) ..controls (115.07bp,77.031bp) and (120.52bp,87.72bp)  .. (node_2);
  \draw [black,->] (node_3) ..controls (50.822bp,183.03bp) and (45.792bp,193.72bp)  .. (node_9);
  \draw [black,->] (node_5) ..controls (120.13bp,237.5bp) and (85.879bp,251.36bp)  .. (node_11);
  \draw [black,->] (node_19) ..controls (189.58bp,395.86bp) and (172.5bp,408.3bp)  .. (node_20);
  \draw [black,->] (node_8) ..controls (96.66bp,235.81bp) and (96.66bp,246.03bp)  .. (node_10);
  \draw [black,->] (node_20) ..controls (133.54bp,448.03bp) and (128.3bp,458.72bp)  .. (node_22);
  \draw [black,->] (node_4) ..controls (159.66bp,182.81bp) and (159.66bp,193.03bp)  .. (node_5);
  \draw [black,->] (node_7) ..controls (222.66bp,235.81bp) and (222.66bp,246.03bp)  .. (node_12);
  \draw [black,->] (node_16) ..controls (46.007bp,343.75bp) and (56.228bp,357.92bp)  .. (63.66bp,371.0bp) .. controls (82.339bp,403.88bp) and (99.709bp,444.65bp)  .. (node_22);
  \draw [black,->] (node_10) ..controls (115.54bp,289.78bp) and (130.71bp,302.06bp)  .. (node_14);
  \draw [black,->] (node_22) ..controls (115.66bp,500.81bp) and (115.66bp,511.03bp)  .. (node_23);
  \draw [black,->] (node_17) ..controls (42.439bp,406.95bp) and (61.556bp,456.12bp)  .. (83.66bp,494.0bp) .. controls (89.291bp,503.65bp) and (96.613bp,513.74bp)  .. (node_23);
  \draw [black,->] (node_13) ..controls (173.0bp,290.4bp) and (183.78bp,304.44bp)  .. (190.66bp,318.0bp) .. controls (197.63bp,331.75bp) and (202.94bp,348.46bp)  .. (node_19);
  \draw [black,->] (node_5) ..controls (140.78bp,236.78bp) and (125.61bp,249.06bp)  .. (node_10);
  \draw [black,->] (node_2) ..controls (124.93bp,140.04bp) and (110.85bp,179.61bp)  .. (node_8);
  \draw [black,->] (node_19) ..controls (208.7bp,394.81bp) and (207.29bp,405.03bp)  .. (node_21);
  \draw [black,->] (node_10) ..controls (77.78bp,289.78bp) and (62.615bp,302.06bp)  .. (node_16);
  \draw [black,->] (node_15) ..controls (122.53bp,352.64bp) and (165.59bp,394.5bp)  .. (node_21);
  \draw [black,->] (node_3) ..controls (89.208bp,184.27bp) and (115.92bp,197.63bp)  .. (node_5);
  \draw [black,->] (node_16) ..controls (33.38bp,341.81bp) and (33.179bp,352.03bp)  .. (node_17);
  \draw [black,->] (node_2) ..controls (141.07bp,130.03bp) and (146.52bp,140.72bp)  .. (node_4);
  \draw [black,->] (node_21) ..controls (182.58bp,458.42bp) and (147.89bp,499.41bp)  .. (node_23);
  \draw [black,->] (node_3) ..controls (69.003bp,183.33bp) and (77.655bp,194.65bp)  .. (node_8);
  \draw [black,->] (node_5) ..controls (159.66bp,235.81bp) and (159.66bp,246.03bp)  .. (node_13);
  \draw [black,->] (node_14) ..controls (155.21bp,351.89bp) and (148.08bp,390.89bp)  .. (node_20);
  \draw [black,->] (node_13) ..controls (159.66bp,288.81bp) and (159.66bp,299.03bp)  .. (node_14);
  \draw [black,->] (node_13) ..controls (140.78bp,289.78bp) and (125.61bp,302.06bp)  .. (node_15);
  \draw [black,->] (node_0) ..controls (107.66bp,23.805bp) and (107.66bp,34.034bp)  .. (node_1);
  \draw [black,->] (node_4) ..controls (178.54bp,183.78bp) and (193.71bp,196.06bp)  .. (node_7);
  \draw [black,->] (node_18) ..controls (246.6bp,342.56bp) and (234.81bp,354.35bp)  .. (node_19);
  \draw [black,->] (node_11) ..controls (17.669bp,290.32bp) and (6.3382bp,303.75bp)  .. (1.6605bp,318.0bp) .. controls (-3.6109bp,334.06bp) and (6.7211bp,351.09bp)  .. (node_17);
  \draw [black,->] (node_1) ..controls (95.828bp,87.113bp) and (76.668bp,126.96bp)  .. (node_3);
  \draw [black,->] (node_2) ..controls (160.99bp,131.16bp) and (183.86bp,144.26bp)  .. (node_6);
  \draw [black,->] (node_12) ..controls (203.78bp,289.78bp) and (188.61bp,302.06bp)  .. (node_14);
  \draw [black,->] (node_9) ..controls (33.66bp,235.81bp) and (33.66bp,246.03bp)  .. (node_11);
\end{tikzpicture}
}
\end{center}
\caption{Left: $T_3^{\rm Perm}$, the poset of permutation magog triangles of order $3$; Right: $T_4^{\rm Perm}$. In both pictures, the $213$-avoiding permutations are circled; Theorem~\ref{thm:Catdistr} shows the subposet of $T_n^{\rm Perm}$ consisting of the $213$-avoiding permutations is the Catalan distributive lattice of order $n$.}
\label{fig:Catdist34}
\end{figure}
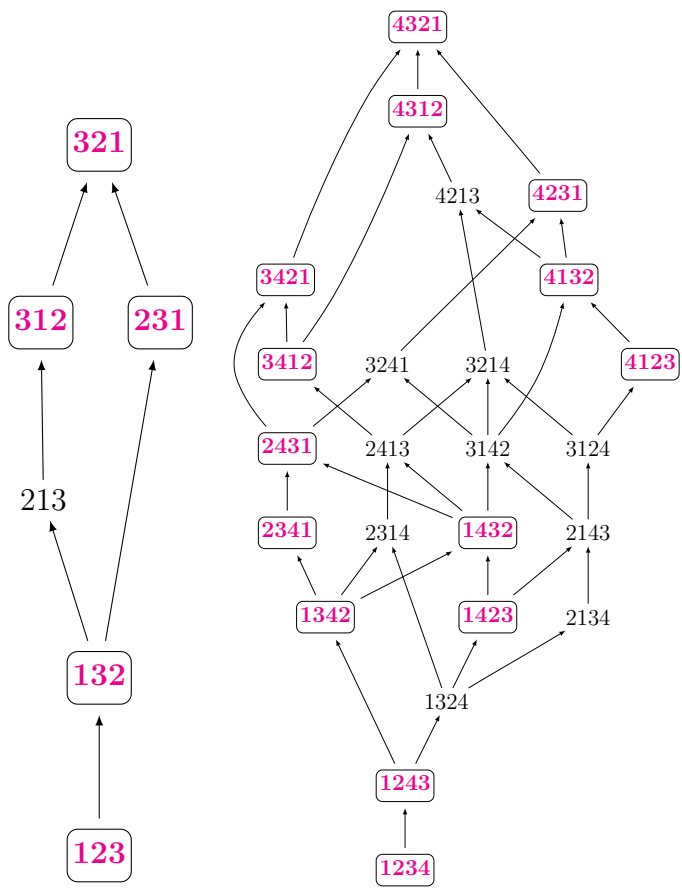

\begin{remark}
A natural question is whether $T_n^{\rm Perm}$ restricted to the permutations which avoid the pattern $123$, $231$, $312$, or $321$ yields a known Catalan poset. We have calculated these posets in Sage~\cite{sage} and have noted that for $n=4$, none of these posets are ranked and none are lattices. So these posets are not $Tam_n$ or $Cat_n$, since both of these are lattices; it is unknown whether they have some interpretation in terms of other Catalan objects. Note also that in~\cite{312ASM}, A.\ Ayyer, R.\ Cori, and D.\ Gouyou-Beauchamps showed the set of alternating sign matrices with \emph{gapless} monotone triangles contains exactly the $312$-avoiding permutations as its permutation subset. In both \cite{312ASM} and \cite{STRIKERPOSET}, this subset of monotone triangles was studied and shown to be a subset in which ASM and TSSCPP are in natural bijection. In \cite{STRIKERPOSET}, we additionally studied the poset structure of gapless monotone triangles, which we denoted $J(\mathrm{ASM}\cap \mathrm{TSSCPP})$ since we found it to be a distributive lattice related to $A_n$ and $T_n$ in the family of \emph{tetrahedral posets}.
\end{remark}

\subsection{The TSSCPP boolean poset and Bruhat order}
\label{subsec:TBool}
We now define a new TSSCPP poset using boolean triangles. See Figure~\ref{fig:TBool3} for an example in the case $n=3$.

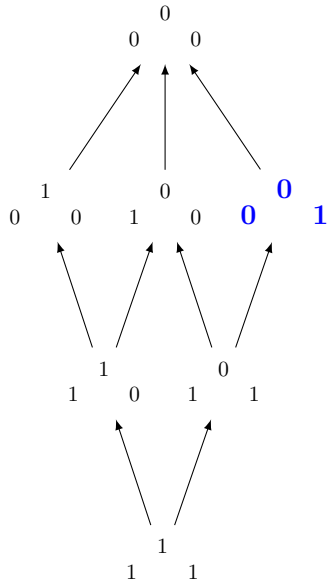
\begin{figure}[htbp]
\begin{center}
\begin{tikzpicture}[>=latex,line join=bevel,]
\node (node_6) at (57.500000bp,15.500000bp) [draw,draw=none] {\scalebox{.75}{$\begin{array}{ccc}
& 1 &\\
1 & &1
\end{array}$}};
  \node (node_5) at (35.500000bp,82.500000bp) [draw,draw=none] {\scalebox{.75}{$\begin{array}{ccc}
& 1 &\\
1 & &0
\end{array}$}};
  \node (node_4) at (80.500000bp,82.500000bp) [draw,draw=none] {\scalebox{.75}{$\begin{array}{ccc}
& 0 &\\
1 & &1
\end{array}$}};
  \node (node_3) at (58.500000bp,149.500000bp) [draw,draw=none] {\scalebox{.75}{$\begin{array}{ccc}
& 0 &\\
1 & &0
\end{array}$}};
  \node (node_2) at (13.500000bp,149.500000bp) [draw,draw=none] {\scalebox{.75}{$\begin{array}{ccc}
& 1 &\\
0 & &0
\end{array}$}};
  \node (node_1) at (103.500000bp,149.500000bp) [draw,draw=none] {\scalebox{.75}{$\begin{array}{ccc}
& \Large{\textcolor{blue}{\textbf{0}}} &\\
\Large{\textcolor{blue}{\textbf{0}}} & &\Large{\textcolor{blue}{\textbf{1}}}
\end{array}$}};
  \node (node_0) at (58.500000bp,216.500000bp) [draw,draw=none] {\scalebox{.75}{$\begin{array}{ccc}
& 0 &\\
0 & &0
\end{array}$}};
  \draw [black,->] (node_3) ..controls (58.500000bp,172.730000bp) and (58.500000bp,182.020000bp)  .. (node_0);
  \draw [black,->] (node_5) ..controls (27.920000bp,105.900000bp) and (24.692000bp,115.430000bp)  .. (node_2);
  \draw [black,->] (node_2) ..controls (29.292000bp,173.310000bp) and (36.328000bp,183.470000bp)  .. (node_0);
  \draw [black,->] (node_5) ..controls (43.424000bp,105.900000bp) and (46.799000bp,115.430000bp)  .. (node_3);
  \draw [black,->] (node_6) ..controls (49.920000bp,38.895000bp) and (46.692000bp,48.432000bp)  .. (node_5);
  \draw [black,->] (node_6) ..controls (65.424000bp,38.895000bp) and (68.799000bp,48.432000bp)  .. (node_4);
  \draw [black,->] (node_1) ..controls (87.708000bp,173.310000bp) and (80.672000bp,183.470000bp)  .. (node_0);
  \draw [black,->] (node_4) ..controls (88.424000bp,105.900000bp) and (91.799000bp,115.430000bp)  .. (node_1);
  \draw [black,->] (node_4) ..controls (72.920000bp,105.900000bp) and (69.692000bp,115.430000bp)  .. (node_3);
\end{tikzpicture}
\end{center}
\caption{The poset $TBool_3$ of TSSCPP boolean triangles of order $3$, partially ordered by reverse componentwise comparison. Note that the bold boolean triangle is the only non-permutation TSSCPP; the induced subposet obtained by removing this element is $[2]\times [3]$.}
\label{fig:TBool3}
\end{figure}

\begin{definition} \rm
Define the \emph{boolean partial order} $TBool_n$ (or the TSSCPP boolean poset of order $n$) by reverse componentwise comparison of the TSSCPP boolean triangles of order $n$. 
\end{definition}

Note that $TBool_n$ is the induced subposet of the Boolean lattice on $\binom{n}{2}$ elements given by only taking the elements corresponding to TSSCPP boolean triangles. 

\begin{proposition}
\label{prop:notlattice}
$TBool_n$ is a lattice for $n\leq 3$, but for $n\geq 4$ it is not a lattice.
\end{proposition}

\begin{proof}
We have computed $TBool_n$ in Sage~\cite{sage} for $n\leq 5$. $TBool_3$ is the (non-distributive) lattice in Figure~\ref{fig:TBool3}. We claim $TBool_4$ is not a lattice. Consider the following elements of $TBool_4$:

\begin{center}
$b_1 =$ \scalebox{.75}{$\begin{array}{ccccc}
& & 0 &\\
& 0 & &0\\
0 & & 0 & & 1
\end{array}$}
and
$b_2 =$ \scalebox{.75}{$\begin{array}{ccccc}
& & 0 &\\
& 0 & &1\\
0 & & 0 & & 0
\end{array}$}.
\end{center}

\noindent
The array \scalebox{.75}{$\begin{array}{ccccc}
& & 0 &\\
& 0 & &1\\
0 & & 0 & & 1
\end{array}$}
is not in $TBool_4$, since it violates the defining condition of Definition~\ref{def:bool}. So there is no rank $2$ element of $TBool_4$ greater than both $b_1$ and $b_2$. There are two rank $3$ elements of $TBool_4$ greater than $b_1$ and $b_2$:

\begin{center}
\scalebox{.75}{$\begin{array}{ccccc}
& & 0 &\\
& 1 & &1\\
0 & & 0 & & 1
\end{array}$}
and
\scalebox{.75}{$\begin{array}{ccccc}
& & 0 &\\
& 0 & &1\\
0 & & 1 & & 1
\end{array}$}. 
\end{center}

\noindent
So $b_1$ and $b_2$ violate the lattice condition, since they do not have a join (least upper bound).

For any $n>1$, $TBool_{n-1}$ sits as an interval inside $TBool_n$. Thus if $TBool_{n-1}$ is not a lattice, then neither is $TBool_n$.
\end{proof}

If we further restrict this order to permutation TSSCPP, we show, in Corollary~\ref{cor:TBool}, that the poset formed is an exceptionally nice distributive lattice which sits between the weak and strong Bruhat orders; see Figure~\ref{fig:snorders}. 

\begin{definition}
\label{def:permboolposet}
Let the \emph{TSSCPP boolean permutation poset}, denoted $TBool_n^{\rm Perm}$, on permutations of $n$ be given by the reverse componentwise comparison of the corresponding TSSCPP boolean triangles. That is, $TBool_n^{\rm Perm}$ is the induced subposet of $TBool_n$ consisting of the \emph{permutation} TSSCPP boolean triangles.
\end{definition}

We will also need the following definition; see~\cite{BjornerBrenti}.

\begin{definition}
\label{def:bruhat}
The \emph{weak order} on the symmetric group $S_n$ is the partial order on permutations of $n$ whose covering relations are given as: $\pi$ covers $\sigma$ if they differ by an adjacent transposition $(i, i+1)$ and there is an inversion between $i$ and $i+1$ in $\pi$. The \emph{strong Bruhat order} on $S_n$ is the partial order whose covering relations are given by: $\pi$ covers $\sigma$ if they differ by a transposition $(i, j)$ and $\pi$ has one more inversion than $\sigma$.
\end{definition}

As a corollary of Theorem~\ref{thm:tsscppsn}, we have the following; see Figure~\ref{fig:snorders}.
\begin{corollary}
\label{cor:TBool}
$TBool_n^{\rm Perm}$ is isomorphic to $[2]\times[3]\times\cdots\times[n]$, that is, the product of chains with $2, 3, \ldots, n$ elements. Thus, this
 is a partial order on permutations which sits between the weak and strong Bruhat orders on the symmetric group. That is, it contains all of the ordering relations of the weak order plus some of the additional relations of the strong order.
\end{corollary}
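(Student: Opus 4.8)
The plan is to transport $TBool_n^{\rm Perm}$ to permutations through the bijection of Theorem~\ref{thm:tsscppsn} and then recognize the resulting order as the componentwise order on a classical permutation code.

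\emph{Product of chains.} First I would use that a permutation TSSCPP boolean triangle has weakly decreasing (hence left-justified) rows, so it is completely determined by its vector of row sums $(\Sigma_1,\dots,\Sigma_{n-1})$, where $\Sigma_i=\sum_{j=n-i}^{n-1}b_{i,j}\in\{0,1,\dots,i\}$. For two left-justified binary rows of equal length, componentwise comparison is equivalent to comparison of their sums; applying this row by row shows that (reverse) componentwise comparison of permutation boolean triangles is exactly (reverse) componentwise comparison of the vectors $(\Sigma_1,\dots,\Sigma_{n-1})$. Since each $\Sigma_i$ ranges independently over a chain of $i+1$ elements (this independence is precisely the counting argument of Proposition~\ref{prop:nfact}), this identifies $TBool_n^{\rm Perm}$ with the product of chains $[2]\times[3]\times\cdots\times[n]$.

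\emph{The code.} Next I would pin down the permutation statistic carried by each row. The argument inside the proof of Theorem~\ref{thm:tsscppsn} already matches the zeros of row $i$ of $b$ with the inversions of the associated permutation $\sigma$ that pair a larger earlier value with $\sigma(i+1)$; hence the number of zeros in row $i$ equals $c_{i+1}(\sigma):=\#\{k\le i:\sigma(k)>\sigma(i+1)\}$, the number of inversions ending in position $i+1$, i.e.\ the left inversion table (Lehmer code). Consequently $\sigma\le\tau$ in $TBool_n^{\rm Perm}$ if and only if $c_{i+1}(\sigma)\le c_{i+1}(\tau)$ for all $i$, so $TBool_n^{\rm Perm}$ is exactly the componentwise order on inversion tables. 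One can check on the data of Figure~\ref{fig:TBool3} that $\Sigma_i=i-c_{i+1}$, consistent with this.

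\emph{Weak order is contained.} To show the weak order of Definition~\ref{def:bruhat} lies inside $TBool_n^{\rm Perm}$ it suffices to verify this on weak covers. A weak cover interchanges the values $i$ and $i+1$, moving $i+1$ before $i$; a direct computation shows this leaves every entry of the inversion table fixed except the entry indexed by the old position of $i+1$, which increases by exactly $1$. Thus each weak cover is an upward relation in the componentwise code order, and by transitivity every weak relation is a $TBool_n^{\rm Perm}$ relation.

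\emph{Containment in strong Bruhat and the main obstacle.} Finally, for containment in the strong Bruhat order I would argue on covers. As $TBool_n^{\rm Perm}$ is a product of chains, its covering relations are precisely the single unit increments of one coordinate of the inversion table, and the crux is to realize each such increment as a strong Bruhat cover. Using the reconstruction of a permutation from its inversion table, in which $\sigma(j)$ is the $(j-c_j)$-th smallest of the values not occurring in positions $j+1,\dots,n$, incrementing $c_j$ by one replaces $\sigma(j)$ by the next smaller available value, which occupies some earlier position $a<j$; positions $j+1,\dots,n$ and all values outside $\{\sigma(a),\sigma(j)\}$ are unaffected, so the new permutation is $\tau=\sigma\cdot(a\,j)$ with $\sigma(a)<\sigma(j)$. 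Because these two values are consecutive within $\{\sigma(1),\dots,\sigma(j)\}$, no position strictly between $a$ and $j$ carries a value lying between them, which is exactly the covering condition for strong Bruhat order. Hence every cover of $TBool_n^{\rm Perm}$ is a strong Bruhat cover, and the containment follows (strictness of both containments is already visible at $n=3$, e.g.\ $132<312$ lies in $TBool_3^{\rm Perm}$ and in strong Bruhat but not in weak order, while $213<231$ lies in strong Bruhat but not in $TBool_3^{\rm Perm}$). I expect this last step---tracking the action of a single inversion-table increment on one-line notation and checking the no-intermediate-value condition---to be the main obstacle; the product-of-chains description and the weak-order containment are comparatively routine once the inversion-table reading of the rows is in place.
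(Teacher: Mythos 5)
Your proposal is correct and follows essentially the same route as the paper: identify the rows of a permutation boolean triangle with the entries of the left inversion table (so that the independence of the row sums from Proposition~\ref{prop:nfact} gives the product of chains $[2]\times[3]\times\cdots\times[n]$), check that a weak-order cover increments a single code entry by one, and realize each unit increment of a code entry as a transposition adding exactly one inversion, hence a strong Bruhat cover. The only cosmetic difference is that the paper extracts this transposition by tracking the effect on the corresponding monotone triangles, whereas you use the standard reconstruction of a permutation from its Lehmer code; the underlying argument is the same.
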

\begin{proof}
In the proof of Theorem~\ref{thm:tsscppsn} we noted that a zero in row $i$ of a permutation TSSCPP boolean triangle represents an inversion between $\sigma(i+1)$ and some $\sigma(k)$ with $k\leq i$ in the one-line notation of the corresponding permutation. So the number of zeros in row $i$ of the permutation TSSCPP boolean triangle equals the number of $k\leq i$ such that  $\sigma(k)>\sigma(i+1)$. As we noted in Proposition~\ref{prop:nfact}, the number of zeros in a row of a permutation TSSCPP boolean triangle can be chosen independently, thus $TBool_n^{\rm Perm}$ is isomorphic to the product of chains $[2]\times[3]\times\cdots\times[n]$; the order ideal composed of $\ell$ elements in the chain $[i]$ corresponds to row $i-1$ of the boolean triangle containing $\ell$ zeros. 

It is known that the product of chains poset $[2]\times[3]\times\cdots\times[n]$ sits between the weak and strong Bruhat orders (see, for example, \cite[p.\ 182]{StanleyLefschetz}, attributed to Gansner). We prove this directly in our context by showing the covering relations in $TBool_n^{\rm Perm}$ (1) contain the covering relations of the weak order and (2) are contained in the covering relations of the strong order.
To show (1), we note that swapping $\sigma(i)=k$ and $\sigma(j)=k+1$, $i<j$, creates a single new inversion at $\sigma(j)$; this adds a zero to row $j-1$ of the boolean triangle and leaves all other entries the same.  Thus, the covering relations of the weak order are also covering relations in $TBool_n^{\rm Perm}$. 

To show (2), we wish to show that changing a one into a zero in row $i$ of a permutation TSSCPP boolean triangle is a transposition on the corresponding permutation. Recall that the ones in a permutation TSSCPP boolean triangle are left-justified. Thus, increasing the number of zeros in row $i$ can only happen by changing the last one in the row to a zero. 
In particular, suppose two permutation boolean triangles $b$ and $b'$ are equal in all entries, except $b_{i,j}=1$ and $b'_{i,j}=0$. Thus $b'$ covers $b$ in $TBool_n^{\rm Perm}$. Let $a$ and $a'$ be the monotone triangles in bijection via Theorem~\ref{thm:tsscppsn} with $b$ and $b'$, respectively. Now $a$ and $a'$ are equal for rows $i+1$ through $n$. But $a'_{i,j} = a_{i+1,j}$ whereas $a_{i,j}=a_{i+1,j-1}$. Also, there is a (possibly empty) path above $a'_{i,j}$ which now equals the value $a_{i+1,j}$ rather than $a_{i+1,j-1}$. Now in $a$, the value $a_{i+1,j-1}$ first appears in some row $k$ for $k\leq i$. So in $a'$, the value $a_{i+1,j}$ first appears now in row $k$. This means in the corresponding permutations $\sigma$ and $\sigma'$, $\sigma(k)=a_{i+1,j-1}$ and $\sigma(i+1)=a_{i+1,j}$ whereas $\sigma'(k) = a_{i+1,j}$ and $\sigma'(i+1)=a_{i+1,j-1}$, and $\sigma$ and $\sigma'$ agree in all other positions. This is a transposition of $a_{i+1,j-1}$ and $a_{i+1,j}$, thus all covering relations in $TBool_n^{\rm Perm}$ are covering relations in the strong Bruhat order on permutations of $n$.
\end{proof}

\begin{figure}[hbtp]
\begin{center}
\begin{tikzpicture}[>=latex,line join=bevel,]
\node (node_5) at (45.000000bp,167.500000bp) [draw,draw=none] {$321$};
  \node (node_4) at (18.000000bp,114.500000bp) [draw,draw=none] {$312$};
  \node (node_3) at (72.000000bp,114.500000bp) [draw,draw=none] {$231$};
  \node (node_2) at (72.000000bp,61.500000bp) [draw,draw=none] {$213$};
  \node (node_1) at (18.000000bp,61.500000bp) [draw,draw=none] {$132$};
  \node (node_0) at (45.000000bp,8.500000bp) [draw,draw=none] {$123$};
  \draw [black,->] (node_1) ..controls (18.000000bp,76.805000bp) and (18.000000bp,87.034000bp)  .. (node_4);
  \draw [black,->] (node_2) ..controls (72.000000bp,76.805000bp) and (72.000000bp,87.034000bp)  .. (node_3);
  \draw [black,->] (node_0) ..controls (52.733000bp,24.106000bp) and (58.474000bp,34.950000bp)  .. (node_2);
  \draw [black,->] (node_4) ..controls (25.733000bp,130.110000bp) and (31.474000bp,140.950000bp)  .. (node_5);
  \draw [black,->] (node_3) ..controls (64.267000bp,130.110000bp) and (58.526000bp,140.950000bp)  .. (node_5);
  \draw [black,->] (node_0) ..controls (37.267000bp,24.106000bp) and (31.526000bp,34.950000bp)  .. (node_1);
\end{tikzpicture}
\hspace{.3in}
\begin{tikzpicture}[>=latex,line join=bevel,]
\node (node_5) at (45.000000bp,167.500000bp) [draw,draw=none] {$321$};
  \node (node_4) at (18.000000bp,114.500000bp) [draw,draw=none] {$312$};
  \node (node_3) at (72.000000bp,114.500000bp) [draw,draw=none] {$231$};
  \node (node_2) at (72.000000bp,61.500000bp) [draw,draw=none] {$213$};
  \node (node_1) at (18.000000bp,61.500000bp) [draw,draw=none] {$132$};
  \node (node_0) at (45.000000bp,8.500000bp) [draw,draw=none] {$123$};
  \draw [black,->] (node_1) ..controls (18.000000bp,76.805000bp) and (18.000000bp,87.034000bp)  .. (node_4);
  \draw [black,->] (node_2) ..controls (72.000000bp,76.805000bp) and (72.000000bp,87.034000bp)  .. (node_3);
  \draw [black,->] (node_0) ..controls (52.733000bp,24.106000bp) and (58.474000bp,34.950000bp)  .. (node_2);
  \draw [black,->] (node_4) ..controls (25.733000bp,130.110000bp) and (31.474000bp,140.950000bp)  .. (node_5);
  \draw [black,->] (node_1) ..controls (34.024000bp,77.634000bp) and (46.679000bp,89.586000bp)  .. (node_3);
  \draw [black,->] (node_3) ..controls (64.267000bp,130.110000bp) and (58.526000bp,140.950000bp)  .. (node_5);
  \draw [black,->] (node_0) ..controls (37.267000bp,24.106000bp) and (31.526000bp,34.950000bp)  .. (node_1);
\end{tikzpicture}
\hspace{.3in}
\begin{tikzpicture}[>=latex,line join=bevel,]
\node (node_5) at (45.000000bp,167.500000bp) [draw,draw=none] {$321$};
  \node (node_4) at (72.000000bp,114.500000bp) [draw,draw=none] {$231$};
  \node (node_3) at (18.000000bp,114.500000bp) [draw,draw=none] {$312$};
  \node (node_2) at (72.000000bp,61.500000bp) [draw,draw=none] {$213$};
  \node (node_1) at (18.000000bp,61.500000bp) [draw,draw=none] {$132$};
  \node (node_0) at (45.000000bp,8.500000bp) [draw,draw=none] {$123$};
  \draw [black,->] (node_1) ..controls (34.024000bp,77.634000bp) and (46.679000bp,89.586000bp)  .. (node_4);
  \draw [black,->] (node_4) ..controls (64.267000bp,130.110000bp) and (58.526000bp,140.950000bp)  .. (node_5);
  \draw [black,->] (node_0) ..controls (52.733000bp,24.106000bp) and (58.474000bp,34.950000bp)  .. (node_2);
  \draw [black,->] (node_2) ..controls (55.976000bp,77.634000bp) and (43.321000bp,89.586000bp)  .. (node_3);
  \draw [black,->] (node_1) ..controls (18.000000bp,76.805000bp) and (18.000000bp,87.034000bp)  .. (node_3);
  \draw [black,->] (node_3) ..controls (25.733000bp,130.110000bp) and (31.474000bp,140.950000bp)  .. (node_5);
  \draw [black,->] (node_0) ..controls (37.267000bp,24.106000bp) and (31.526000bp,34.950000bp)  .. (node_1);
  \draw [black,->] (node_2) ..controls (72.000000bp,76.805000bp) and (72.000000bp,87.034000bp)  .. (node_4);
\end{tikzpicture}
\end{center}
\caption{From left to right: the weak order on permutations of 3, the boolean partial order $TBool_3^{\rm Perm}$ on permutation TSSCPP of order $3$, and the strong Bruhat order on permutations of 3}
\label{fig:snorders}
\end{figure}
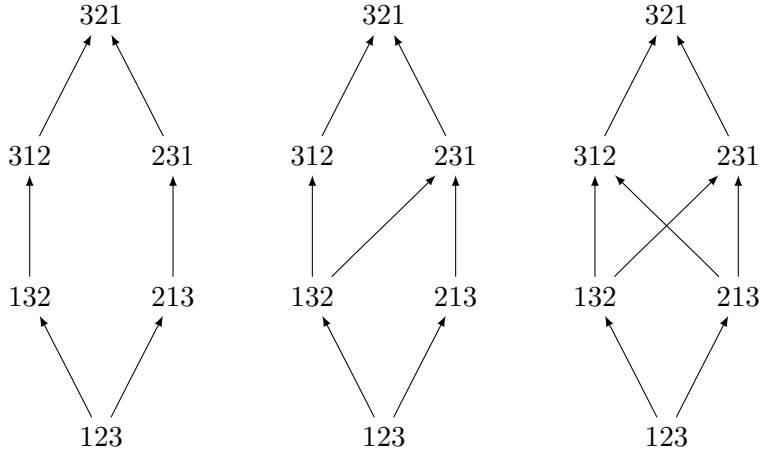

Finally, we note that though $T_n^{\rm Perm}$ and $TBool_n^{\rm Perm}$ differ, they contain the same Tamari and Catalan subposets. So we obtain the following, as a corollary of Theorems~\ref{thm:tamari} and \ref{thm:Catdistr}.

\begin{corollary}
\label{cor:tamcat2}
$TBool_n^{\rm Perm}$ contains both $Tam_n$ and $Cat_n$ as subposets. In particular, the subposet of $TBool_n^{\rm Perm}$ consisting of the $132$-avoiding permutations is isomorphic to $Tam_n$ and the subposet consisting of the $213$-avoiding permutations is isomorphic to $Cat_n$.
\end{corollary}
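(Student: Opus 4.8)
The plan is to leverage the fact that the posets $T_n^{\rm Perm}$ and $TBool_n^{\rm Perm}$, while genuinely different on the full set of permutations, both collapse to the same row-sum comparison once we restrict to the $132$-avoiding or $213$-avoiding permutations. Concretely, I would reuse verbatim the bijections built in the proofs of Theorems~\ref{thm:tamari} and~\ref{thm:Catdistr} and only re-examine the order relation.

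First I would recall that in both of those proofs a permutation TSSCPP boolean triangle $b$ is sent to the sequence $x_1,\ldots,x_n$ with $x_i = i + \sum_{j=n-i}^{n-1} b_{n-i,j}$; writing $\Sigma_k$ for the sum of row $k$ of $b$, this is $x_i = i + \Sigma_{n-i}$. The identification of the $132$-avoiding (resp.\ $213$-avoiding) boolean triangles with the bracket vectors of $Tam_n$ (resp.\ the sequences of $Cat_n$) is a statement about the underlying sets and depends only on these row sums; in particular, which boolean triangles lie in each subset is unchanged by the choice of ambient order. Thus the two candidate subposets of $TBool_n^{\rm Perm}$ have exactly the same elements as the corresponding subposets of $T_n^{\rm Perm}$.

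Next I would compare the order relations. By Definitions~\ref{def:tamari} and~\ref{def:Catdistr}, both $Tam_n$ and $Cat_n$ are ordered by reverse componentwise comparison of the sequences $(x_i)$, and since $x_i - \Sigma_{n-i} = i$ is a fixed per-coordinate offset, this is identical to reverse componentwise comparison of the row-sum vectors $(\Sigma_1,\ldots,\Sigma_{n-1})$. On the other side, because the rows of any permutation boolean triangle are weakly decreasing (the ones are left-justified, as in Proposition~\ref{prop:nfact}), each such triangle is completely determined by its row sums, and $b_{i,j}\ge b'_{i,j}$ for all $i,j$ holds if and only if $\Sigma_k \ge \Sigma'_k$ for all $k$. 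Hence the reverse componentwise order $TBool_n^{\rm Perm}$ is exactly reverse componentwise comparison of the row-sum vectors as well.

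Combining these two observations, the map $b\mapsto(x_i)$ is a poset isomorphism from the $132$-avoiding (resp.\ $213$-avoiding) permutation boolean triangles, equipped with the $TBool_n^{\rm Perm}$ order, onto $Tam_n$ (resp.\ $Cat_n$), since both sides reduce to the same comparison of row sums. This yields the corollary. The one point that requires care—and the only place the argument could go wrong—is verifying that $TBool_n^{\rm Perm}$ really is governed solely by row sums; but this is immediate from left-justification, and is precisely why the magog order $T_n^{\rm Perm}$ and the boolean order $TBool_n^{\rm Perm}$ agree on these Catalan subsets even though they differ in general.
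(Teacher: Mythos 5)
Your proposal is correct and takes essentially the same route as the paper: both arguments reduce the order on the $132$- and $213$-avoiding subsets of $TBool_n^{\rm Perm}$ to reverse componentwise comparison of the sequences $x_i = i + \Sigma_{n-i}$ coming from Theorems~\ref{thm:tamari} and~\ref{thm:Catdistr}, which is exactly the defining order of $Tam_n$ and $Cat_n$. Your explicit justification that, for left-justified boolean triangles, componentwise comparison coincides with comparison of the row-sum vectors is a welcome spelling-out of a step the paper's proof leaves implicit.
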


\begin{proof}
In Theorems~\ref{thm:tamari} and \ref{thm:Catdistr}, it was shown that the $T_n$ covering relations of Lemma~\ref{lem:magogboolcovers} on either the $132$- or $213$-avoiding subposets reduce to reverse componentwise comparison covering relations on the Catalan sequence $x_1,x_2,\ldots,x_n$, where $x_i$ was defined to equal $i$ plus the sum of row $n-i$ in the boolean triangle. This is equivalent to reverse componentwise comparison of the $132$- or $213$-avoiding permutation TSSCPP boolean triangles. Thus the $132$-avoiding subposet of $T_n^{\rm Perm}$ is equivalent to the $132$-avoiding subposet of $TBool_n^{\rm Perm}$, and likewise for the $213$-avoiding subposets.
\end{proof}

\subsection{Summary}
\label{subsec:summary}
In this section, we have discussed three posets: $A_n$ on ASM, and $T_n$ and $TBool_n$ on TSSCPP. $A_n$ and $T_n$ are distributive lattices and members of the tetrahedral poset family~\cite{STRIKERPOSET}, while $TBool_n$ is not a lattice in general (Proposition~\ref{prop:notlattice}). $A_n^{\rm Perm}$ and $TBool_n^{\rm Perm}$ are known permutation posets; $A_n^{\rm Perm}$ is isomorphic to the strong Bruhat order~\cite{TREILLIS} which is not a lattice, and we have shown $TBool_n^{\rm Perm}$ is isomorphic to $[2]\times[3]\times\cdots\times[n]$, which is a distributive lattice that sits between the weak and strong Bruhat orders (Corollary~\ref{cor:TBool}). While $T_n^{\rm Perm}$ is neither a well-studied permutation partial order nor a lattice (Proposition~\ref{prop:notlatticemagog}), it has Catalan subposets isomorphic to the Tamari lattice (Theorem~\ref{thm:tamari}) and the Catalan distributive lattice (Theorem~\ref{thm:Catdistr}). Thus, $T_n^{\rm Perm}$  may be an interesting new perspective from which to study permutations as well. 
We have also shown in Corollary~\ref{cor:tamcat2} that the same Tamari and Catalan subposets exist in $TBool_n^{\rm Perm}$ as in  $T_n^{\rm Perm}$. 
An additional point of contrast is that $A_n^{\rm Perm}$, as the strong Bruhat order, contains the Catalan distributive lattice as an induced subposet, but not the Tamari lattice. 

We hope that the study of these various ASM and TSSCPP partial orders will continue to provide insight on the combinatorics of these objects and the associated outstanding bijection problems. 

\section*{Acknowledgments}
The author thanks the anonymous referees for many helpful comments. The author also thanks Vic Reiner for suggesting Theorem \ref{thm:Catdistr} and for helpful comments on Section~\ref{subsec:tamari} and Dennis Stanton for helpful comments. The author also thanks the developers of Sage~\cite{sage} and the Sage--Combinat community~\cite{SageCombinat} for developing and sharing their code by which some of this research was conducted, including Propositions~\ref{prop:notlatticemagog} and \ref{prop:notlattice}, and by which we drew Figures~\ref{fig:A3T3}, \ref{fig:Cat_compare}, \ref{fig:Tpermmagog34}, \ref{fig:TBool3}, and \ref{fig:snorders}. She also would like to thank O.~Cheong, the developer of Ipe~\cite{ipe}, by which Figures~\ref{ex:funddomain3}, \ref{fig:bijex} (top), and \ref{fig:P5A5} were drawn. She thanks J.S.~Kim, who wrote the Ti\emph{k}Z code for plane partitions~\cite{PPTikz}, by which we drew Figures~\ref{fig:permbij} (left) and~\ref{ex:tikztsscpp}. 

\bibliographystyle{amsalpha}

\end{document}